\documentclass[11pt,letterpaper]{article}
\usepackage{amsthm,amsmath,amsfonts,amssymb}
\usepackage{tikz}
\usetikzlibrary{calc}
\usetikzlibrary{intersections,angles,quotes,decorations.pathreplacing,arrows.meta}
\usepackage{authblk}
\usepackage[margin=1in]{geometry}
\usepackage{enumerate}
\usepackage{hyperref}
\usepackage[dvipsnames,svgnames]{xcolor}

\DeclareMathOperator{\Vol}{Vol}
\DeclareMathOperator{\Exp}{Exp}
\DeclareMathOperator{\Hess}{Hess}
\NewDocumentCommand{\sff}{}{\mathrm{I\!I}}
\newcommand{\Ric}{\operatorname{Ric}}

\newcommand{\RR}{\mathbb R}
\newcommand{\CC}{\mathrm C}
\newcommand{\JJ}{\mathrm J}
\newcommand{\VV}{\mathrm V}
\newcommand{\HH}{\mathrm H}

\newcommand{\eps}{\varepsilon}
\newcommand{\vphi}{\varphi}
\newcommand{\cH}{\mathcal H}
\newcommand{\cV}{\mathcal V}

\newtheorem{theorem}{Theorem}[section]
\newtheorem{Theorem}{Theorem}
\newtheorem{definition}[theorem]{Definition}
\newtheorem{lemma}[theorem]{Lemma}

\newtheorem{proposition}[theorem]{Proposition}
\newtheorem*{proposition*}{Proposition}

\newtheorem*{remark*}{Remark}

\title{Illumination bodies on Riemannian manifolds}
\author[1]{Rotem Assouline}
\affil[1]{\small Sorbonne Université, Université Paris Cité, CNRS, IMJ-PRG, F-75005 Paris, France}
\author[2]{Carsten Schütt}
\affil[2]{\small University of Kiel, 24118 Kiel, Germany}
\author[3]{Elisabeth M. Werner}
\affil[3]{\small Case Western Reserve University, Cleveland, Ohio 44106, USA}
\date{}

\begin{document}

    \maketitle

    \begin{abstract}
        We prove a generalization of Werner's asymptotic formula for the volume of the illumination body of a convex body, which holds on Riemannian manifolds with Ricci curvature bounded from below. The $\delta$-illumination body of a subset of a Riemannian manifold is defined to be the set of all points such that the union of all minimizing geodesic segments joining the point to the set has volume at most $\delta$. 
    \end{abstract}

    \section{Introduction}

    For a convex body $K \subseteq \RR^n$ and $\delta > 0$, the $\delta$-illumination body of $K$ is defined by
    \[
        K^\delta
        =
        \left\{
            x \in \mathbb{R}^n
            :
            \Vol(\mathrm{conv}(K\cup\{x\})\setminus K) \le \delta
        \right\},
    \]
    where $\Vol$ denotes volume and $\mathrm{conv}$ denotes convex hull.
    This construction was introduced by Werner in \cite{Wer} and is, in a sense, dual to the convex floating body introduced  independently by Bárány and Larman \cite{BaranyLarman} and Sch\"utt and Werner
    \cite{SW} (earlier notions of a floating body appear in the works of Blaschke \cite{Blaschke} and Dupin \cite{Dupin}).

    \medskip
    It was shown in \cite{SW,Wer} that the leading order in the volume expansions for both the floating body and the illumination body as $\delta \searrow 0$ is, up to dimensional constants, the \emph{affine surface area} of the body, which is defined by
    \begin{equation}\label{eq:affine-surface-area}
        \int_{\partial K}\kappa^{\frac{1}{n+1}}d\cH^{n-1}
    \end{equation}
    where $\kappa$ is the (generalized) Gauss curvature and $\cH^{n-1}$ is the $(n-1)$-dimensional Hausdorff measure.
    These results have inspired numerous generalizations, including 
    $L_p$-, Orlicz, and stochastic versions
    \cite{Lutwak, Reitzner, SW5, TW:2023, Ye:2015, Ye:2016, Zhao2016}.   

    \medskip
    Besau and Werner \cite{BW:2016, BW} defined the floating body of a convex body in a space of constant curvature, and proved that the leading term in the expansion for its volume is again, up to the same dimensional constant, the integral \eqref{eq:affine-surface-area}, with $\kappa$ being the Gauss curvature of $\partial K$ with respect to the ambient Riemannian metric. The analogous result for illumination bodies in spaces of constant curvature was established recently by Assouline, Besau and Werner \cite{ABW}.

    \medskip
    Due to the lack of a natural notion of hyperplanes, it is not clear how to define floating bodies on Riemannian manifolds of nonconstant curvature. In contrast, the illumination body does admit a straightforward generalization to arbitrary Riemannian manifolds. The goal of this paper is to introduce such a generalization and to show that Werner's asymptotic formula remains valid in this setting, with the volume and Gauss curvature replaced by their intrinsic Riemannian counterparts.

    \medskip
    Let $(M,g)$ be a Riemannian manifold and let $K \subseteq M$. For a point $x \in M \setminus K$, we denote by $\CC(x,K)$ the union of all minimizing geodesics which join the point $x$ to the set $K$ and whose relative interiors lie in $M\setminus K$:
    \[
        \CC(x,K) : = \bigcup
        \left\{\,\,
        \gamma([0,1])
        \quad \middle| \quad 
        \begin{array}{c}
            \gamma\,\,\text{is a minimizing geodesic}, \,\,\gamma(0) = x,\vspace{.1cm}\\   \gamma(1) \in K, \quad \gamma(s) \in M \setminus K \, \,\,\forall\,s \in [0,1)
        \end{array}
        \right\},
    \]
    see Figure \ref{Cxkfig}. 

    \begin{figure}[h]
        \centering
        \begin{tikzpicture}[scale=1.7]

            \definecolor{lightpurple}{RGB}{190,190,230}
            \definecolor{darkgray}{RGB}{120,120,120}
            \definecolor{lightlavender}{RGB}{218,212,242}

            \coordinate (x) at (3.2,1.4);


            \fill[lightpurple, opacity=0.75] 
            (x) -- (0.55,1.08) --(3.865,0.056)-- cycle;
            
            \fill[gray!45] plot[smooth cycle, tension=0.6] coordinates {
            (-1.7,0) (-1.2,0.5) (-0.3,0.9) (0.8,1.1) (2,1) (2.9,0.76) 
            (3.5,0.49) (3.875,0) (3.6,-0.5) (2.9,-0.95) (2,-1.15) (0.8,-1.1) 
            (-0.3,-0.9) (-1.3,-0.5)
            };

            \node at (1,0) {\Large $K$};

            \fill (x) circle (1.5pt);
            \node[below=1, right=1] at (x) {\large $x$};

            \node at (2.8,1.05) {\normalsize $\CC(x,K)$};

        \end{tikzpicture}
        \caption{The set $\CC(x,K)$.}
        \label{Cxkfig}
    \end{figure}

    Denote by $\Vol$ the Riemannian volume measure on $M$.
    \begin{definition}[Illumination body]\normalfont
        Let $\delta > 0$. The \emph{$\delta$-illumination body} of $K$ is the set
        \begin{equation}\label{eq:Kdeltadef}
            K^\delta : = \left\{x \in M\setminus K : \Vol(\CC(x,K)) \le \delta\right\} \cup K.
        \end{equation}
    \end{definition}

    Our main result generalizes to the Riemannian setting the asymptotic formula established in \cite{Wer} for the volume of the illumination body of a convex body.
    Since there are several notions of convexity in Riemannian geometry, 
    and terminology may vary between sources, we specify precisely what 
    kind of convexity we require from the set $K$:
    \begin{equation*}\label{eq:Kcondition}
        \text{every minimizing geodesic with endpoints in $K$ is contained entirely in $K$.}\tag{$\star$}
    \end{equation*}
    
    We also require that $K$ be \emph{full-dimensional}, meaning that $K$ is the closure of its interior. The manifold $M$ itself is required to be \emph{geodesically-convex}, i.e. every pair of points in $M$
    can be joined by a minimizing geodesic (which is the case in particular if $M$ is complete and connected).

    \begin{Theorem}\label{mainthm}
        Let $(M,g)$ be a smooth, oriented, geodesically-convex, $n$-dimensional Riemannian manifold with Ricci curvature bounded from below and let $K \subseteq M$ be a compact, full-dimensional set satisfying \eqref{eq:Kcondition}. Then
        \begin{equation}\label{maineq}
            \lim_{\delta\searrow 0}\frac{\Vol\left(K^\delta\setminus K\right)}{\delta^{\frac{2}{n+1}}} = \beta_n\cdot \int_{\partial K}\kappa^{\frac{1}{n+1}}d\cH^{n-1},
        \end{equation}
        where $\cH^{n-1}$ is the $(n-1)$-dimensional Hausdorff measure, $\kappa$ is the (generalized) Gauss curvature of $\partial K$, and the constant $\beta_n$ is given by
        \[
            \beta_n : = \frac12\cdot \left(\frac{n(n+1)}{\vartheta_{n-1}}\right)^{\frac{2}{n+1}},
        \]
        where $\vartheta_{n-1}$ is the volume of the unit ball in $\RR^{n-1}$.
    \end{Theorem}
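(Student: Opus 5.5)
We will prove the theorem by writing $K^\delta\setminus K$ in normal coordinates relative to $\partial K$, computing a pointwise asymptotic for each boundary point, and integrating with dominated convergence, as in Werner's Euclidean argument.

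\textbf{Coordinates.} For $x\in M\setminus K$ close to $K$ there is a unique nearest point $p=\pi(x)\in\partial K$. By \eqref{eq:Kcondition}, $K$ is locally convex with Lipschitz boundary. Hence the outer unit normal $N(p)$ exists $\cH^{n-1}$-a.e., and a neighbourhood of $K$ is parametrized by $x=\Exp_p(tN(p))$ for $t\ge0$. (At non-smooth points there is a normal cone; these points have $\kappa=0$ and we treat them like flat points.) We then write
\[
\Vol(K^\delta\setminus K)=\int_{\partial K}\int_0^{t_\delta(p)} J(p,t)\,dt\,d\cH^{n-1}(p),
\]
where $t_\delta(p)$ is the maximal height along the normal ray with $\Vol(\CC(x,K))\le\delta$, and $J(p,t)=1+O(t)$ is the Jacobian.

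We must show that $t\mapsto\Vol(\CC(\Exp_p(tN),K))$ is increasing, at least for small $t$, so that the ray segment is exactly $[0,t_\delta(p)]$. Otherwise we only prove upper and lower bounds on $t_\delta$, which suffice. The integral is of order $\delta^{2/(n+1)}$, so the Jacobian factor is harmless.

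\textbf{Pointwise asymptotic.} Fix a point $p$ where $\partial K$ is twice differentiable in the Alexandrov sense, with second fundamental form $\sff_p$ and $\kappa(p)=\det\sff_p>0$. We work in normal coordinates at $p$ rescaled by $t^{-1/2}$ in the tangential directions and $t^{-1}$ in the normal direction. In the limit $t\to0$, $\partial K$ converges to the paraboloid $\{y_n=-\tfrac12\sff_p(y',y')\}$, minimizing geodesics converge to Euclidean segments, and the metric converges to the Euclidean one. The set $\CC(x,K)$, rescaled, should therefore converge to the cone over the paraboloid cap from the apex $(0,1)$.

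That cone region has Euclidean volume $c_n\,\kappa^{-1/2}$, with $c_n=\tfrac{2^{(n+1)/2}\vartheta_{n-1}}{n(n+1)}$ as in Werner's computation. Undoing the scaling by $t^{(n-1)/2}\cdot t$ gives
\[
\Vol(\CC(x,K))\sim c_n\,\kappa(p)^{-1/2}\,t^{(n+1)/2}.
\]
Inverting, $t_\delta(p)\sim (\delta\kappa^{1/2}/c_n)^{2/(n+1)}$, and checking the constant recovers $\beta_n\kappa^{1/(n+1)}\delta^{2/(n+1)}$.

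Where $\kappa(p)=0$ we need $t_\delta(p)/\delta^{2/(n+1)}\to0$. This follows by comparing with a slightly curved paraboloid containing $K$ locally, which gives a cone of at least the comparison volume. By Alexandrov's theorem for locally convex sets, twice-differentiable points have full measure.

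\textbf{Domination (main obstacle).} We need a uniform bound $t_\delta(p)\le C\delta^{2/(n+1)}$, or at least an integrable majorant, uniformly in $p$. Two things enter here.

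First, a uniform interior ball: for every $p\in\partial K$, $K$ should contain some region of definite size near $p$. We use that $K$ is full-dimensional, compact and locally convex, so $K$ contains a geodesic ball $B(q,r)$. Then $\CC(x,K)\supseteq\CC(x,\text{cone from }x\text{ to }B(q,r))$ minus $K$, and one needs a lower volume bound for the part of this cone near $p$ outside $K$.

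Second, the Ricci lower bound. Via Bishop–Gromov or Jacobian comparison along geodesics emanating from $x$, it gives $\Vol(\CC(x,K))\ge c\,\Vol_{\text{eucl-model}}$ uniformly on a compact neighbourhood of $K$. In the same way the volume element of geodesic cones from $x$ is comparable to the Euclidean one for small distances. This is where the hypothesis is used.

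From these two inputs, $\Vol(\CC(x,K))\ge c\,t^{(n+1)/2}$ for all $p$ and small $t$, since the cone from $x$ over the set of boundary points visible from $x$ has height $t$ and base radius at least $c\sqrt t$. Inverting gives the majorant, and dominated convergence then yields \eqref{maineq}. The hardest part will be justifying this uniform lower bound near non-smooth points and controlling the visible set of $\partial K$ from $x$, which requires showing that minimizing geodesics from $x$ to nearby boundary points have interiors outside $K$.
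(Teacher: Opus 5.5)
Your overall architecture matches the paper's: a normal-ray parametrization, a parabolic blow-up at Alexandrov points for the pointwise limit, and dominated convergence. However, the domination step rests on an estimate that is false, and this is the heart of the proof.

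\textbf{The uniform bound fails.} Suppose $\Vol(\CC(x,K))\ge c\,t^{(n+1)/2}$ held uniformly, hence $t_\delta(p)\le C\delta^{2/(n+1)}$ for all $p$. Combined with your own pointwise limit $\delta^{-2/(n+1)}t_\delta(p)\to\beta_n\kappa(p)^{1/(n+1)}$, this would force $\kappa$ to be essentially bounded on $\partial K$. That already fails for a convex body in $\RR^2$ whose boundary near the origin is $y=-|x|^{3/2}$, where $\kappa\sim\frac34|x|^{-1/2}$. At the origin itself, the visible arc from $(0,t)$ has half-width $(2t)^{2/3}\ll\sqrt t$, and $\Vol(\CC((0,t),K))=\frac35\,t\,(2t)^{2/3}=o(t^{3/2})$.

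\textbf{Where your argument breaks.} The faulty claim is that the visible base has radius at least $c\sqrt t$. What one can guarantee is a radius of order $\sqrt{t\rho(p)}$, where $\rho(p)$ is the radius of the largest ball in $K$ touching $\partial K$ at $p$, and $\rho$ is not bounded below. The trouble comes from points of large curvature, not from corners. A fixed interior ball $B(q,r)$ only yields $\Vol(\CC(x,K))\gtrsim t^n$, which is too weak.

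\textbf{The missing idea} is a non-uniform but integrable majorant.
\begin{enumerate}
\item For $t\le\rho(p)$, one fits inside $\CC(\gamma_p(t),K)$ a right circular cone of height $t/2$ and radius $c\sqrt{t\rho(p)}$ (Lemma~\ref{coneinclusionlemma}). This gives $t_\delta(p)\le C\delta^{2/(n+1)}\rho(p)^{-(n-1)/(n+1)}$.
\item For $t>\rho(p)$, the same bound follows from the global estimate $t\le C_0\delta^{1/n}$.
\item One then needs the Sch\"utt--Werner fact that $\rho^{-\alpha}$ is integrable on $\partial K$ for every $\alpha<1$ (Lemma~\ref{rhointegrablelemma}, transported through locally convex charts). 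Since $(n-1)/(n+1)<1$, this supplies the dominating function.
\end{enumerate}

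\textbf{The role of the Ricci bound.} You also misplace the Ricci hypothesis. On a compact neighbourhood of $K$, Euclidean volume comparison holds by smoothness alone. The lower Ricci bound is needed globally, through Bishop--Gromov centred at $x$, to prove $K^\delta\subseteq U_{C_0\delta^{1/n}}$ (Lemma~\ref{lem:Hausdorff}). Your outline never establishes this, yet it is needed in three places:
\begin{enumerate}
\item to know that $K^\delta\setminus K$ lies in the tube where your normal-ray formula is valid;
\item for the regime $t>\rho(p)$ above;
\item to discard the part of the tube lying over the normal cones at non-smooth points.
\end{enumerate}
That last part is invisible to $\int_{\partial K}\int_0^{t_\delta(p)}J\,dt\,d\cH^{n-1}$. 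It has volume $O(t^2)$ in a tube of width $t$ (Proposition~\ref{tubeprop}), so it is negligible only once $t=o(\delta^{1/(n+1)})$ is known. Treating corners ``like flat points'' does not handle it.

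\textbf{Gaps in the pointwise step.} The blow-up only controls $K$ at scale $\sqrt t$ around $p$. You must also rule out minimizing geodesics from $x_t$ to distant parts of $K$ whose interiors avoid $K$. The paper shows $\CC(x_t,K)\subseteq B_{C\sqrt t}(x_t)$ using \eqref{eq:Kcondition} and $\kappa(p)>0$. In the flat case, $\CC(x,\cdot)$ is not monotone under inclusion, so a comparison paraboloid containing $K$ bounds the volume in the wrong direction. You need a set inside $K$ together with an outer half-space bound, as in Lemma~\ref{degeneratelemma}.
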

    \vskip 2mm

    Condition \eqref{eq:Kcondition} cannot be relaxed to mere geodesic convexity of $K$ without extra assumptions;
    indeed, if $K$ is a square of sidelength $1/2$ with sides parallel to the axes in the flat torus $M = \RR^2/\mathbb{Z}^2$ , then $K^\delta = K$ for all $\delta$ sufficiently small. Note that, since condition \eqref{eq:Kcondition}
    only pertains to minimizing geodesics, it can be satisfied by subsets of compact manifolds.
    For instance, if $M$ is the round sphere and $K$ is a geodesically-convex set
    contained in an open hemisphere, then $K$ satisfies \eqref{eq:Kcondition}. The assumption that the Ricci curvature of $M$ is bounded from below is used in order to show that $K^\delta\to K$ in the Hausdorff metric as $\delta\searrow 0$, see Lemma \ref{lem:Hausdorff}.

    \medskip
    The overall scheme of our proof of Theorem \ref{mainthm} follows loosely that of the original proof given in \cite{Wer}, see also \cite{SW}. 
    However, the details are substantially different, since we do not have at our disposal the tools of Euclidean convex geometry. 
    We work locally, in normal coordinates about a point close to the boundary, and rely on fine analysis of the signed distance function to the set $K$. 
    Instead of using polar coordinates to estimate the volume of $K^\delta$, we employ a tube formula for integration over small neighborhoods of the set $K$ (Proposition \ref{tubeprop}). 
    We therefore obtain an alternative proof of the theorem in the Euclidean case as well.

    \medskip
    The proof is divided into three steps: in the first, we prove a pointwise asymptotic formula for $\Vol(\CC(x,K))$ when the boundary of $K$ is twice differentiable at the nearest point to $x$; in the second, we establish a uniform lower bound on $\Vol(\CC(x,K))$, which in turn implies a uniform upper bound on the integrand in the tube formula; and in the third step, we apply the dominated convergence theorem and conclude the proof. 

    \medskip
    In constant curvature, there exists a global coordinate chart in which convex bodies are also convex in the Euclidean sense; this fact was used in \cite{ABW, BW} in order to reduce to the case of weighted floating and illumination bodies in Euclidean space. In the general case no such reduction is possible. That said, some lemmas concerning regularity of the boundary (Lemma \ref{lem:alexandrov}, Lemma \ref{rhointegrablelemma}) can be reduced to their Euclidean counterparts thanks to the existence of a \emph{local} chart about  each boundary point, in which the boundary is convex in the Euclidean sense. The latter property is enjoyed by any set of positive reach \cite{Lyt}.

    \medskip
    In Euclidean space (as well as in hyperbolic space and on the round hemisphere), $\CC(x,K) \cup K = \mathrm{conv}(\{x\}\cup K)$. We do not know
    whether Theorem \ref{mainthm} remains valid with $\CC(x,K)$ replaced by
    $\mathrm{conv}(\{x\}\cup K)\setminus K$ in the definition of $K^\delta$ (say, when $M$ is a Cartan--Hadamard manifold,
    so that the convex hull is well-defined).

    \medskip
    The rest of the paper is structured as follows. In Section \ref{convsec}, we recall some basic facts about convex sets in Riemannian manifolds. In Section \ref{proofsec} we prove Theorem \ref{mainthm}. In the Appendix we gather some standard comparison results, and prove a technical lemma on second-order differentiability of the signed distance function at regular boundary points.

    \medskip
    \textbf{Acknowledgments}.
    The authors want to thank the Hausdorff Institute (Bonn, Germany).
    It was during the 2024 trimester ``Synergies between modern probability,  geometric analysis and stochastic geometry'' that the project got started.
    Elisabeth M. Werner was supported by NSF grant DMS-2506790. Rotem Assouline was supported by the Fondation Mathématique Jacques Hadamard (FSMP) and by a Rothschild fellowship (Yad Hanadiv).

    \section{Convex subsets of Riemannian manifolds}\label{convsec}

    \subsection{Basic definitions and facts}\label{basicsec}

    Let $(M,g)$ be a smooth, oriented, $n$-dimensional Riemannian manifold ($n \ge 2$). The inner product, norm, distance function and volume measure associated to $g$ will be denoted by $\left<\cdot,\cdot\right>,\,|\cdot|,\,d$ and $\Vol$, respectively. 
    
    \medskip
    A set $K \subseteq M$ is called \emph{(geodesically) convex} if for every $x,y \in K$ there exists a minimizing geodesic joining $x$ to $y$ which is contained in $K$, and \emph{strongly convex} if for every $x,y \in K$ there exists a unique minimizing geodesic joining $x$ to $y$, and that geodesic is contained in $K$.

    \medskip
    We shall assume that the manifold $M$ itself is geodesically-convex, and that the Ricci curvature of $M$ is bounded from below, i.e. there exists a constant $k \in \RR$ such that
    \[
        \Ric(v,v) \ge k \cdot|v|^2 \qquad \text{for all $v \in TM$.}
    \]

    Throughout the paper, we fix a compact set $K \subseteq M$ satisfying \eqref{eq:Kcondition} which is assumed to be \emph{full-dimensional}, meaning that it is the closure of its interior, and write
    \[
        \Sigma : = \partial K.
    \]

    We denote by $r : M \to \RR$ the signed distance function to $\Sigma$:
    \[
    r(x) : = 
    \begin{cases}
        d(x,K) & x \in M \setminus K\\
        -d(x,M\setminus K) & x \in K
    \end{cases}
    = 
    \begin{cases}
        d(x,\Sigma) & x \in M \setminus K\\
        -d(x,\Sigma) & x \in K,
    \end{cases}
    \]
    and set
    \[
    {U_t} : = \{x \in M : r(x) < t\}, \qquad t \in \RR
    \]
    and 
    \[
    \Sigma_t : = \left\{x \in M : r(x) = t\right\}, \qquad t \in \RR.
    \]

    We also fix $R>0$, which we shall take to be as small as we wish, as long as it only depends on the set $K$, and define
    \[
        {U} : = U_R = \{x \in M : r(x) < R\}.
    \]
    For the time being, we require $R$ to be small enough that:

    \begin{itemize}
        \item[(i)] For every $x \in {U}$ there exists a unique point $\Pi x \in K$ minimizing the distance to the point $x$ among the points in $K$.
        \item[(ii)] The function $r$ is $C^{1,1}$ on the set ${U}\setminus K$.
        \item[(iii)] For every $x \in {U}$, the ball $B_{4R}(x)$ is strongly convex and $\exp_x\vert_{B_{4R}(0)}$ is a diffeomorphism onto $B_{4R}(x)$.
    \end{itemize}

    That a positive number $R$ with the first two properties exists was established in \cite{Wal74,Wal}; indeed, condition \eqref{eq:Kcondition} implies that $K$ is locally convex in the sense of \cite{Wal74,Wal}. Property (iii) can be guaranteed by Whitehead's theorem \cite[Theorem 5.14]{CE} and by compactness.

    \medskip
    The \emph{tangent cone} to $K$ at a point $x \in  \Sigma$ is the set of nonzero tangent vectors $v \in T_xM$ such that some geodesic segment with initial velocity $v$ is contained in the interior of $K$:
    \[
        T_xK : = \left\{v \in T_xM\setminus \{0\} \quad \middle| \quad \exists \,t > 0 : \quad \exp_x(sv)\in \mathrm{int} K \quad \forall s \in (0,t]\right\}.
    \]

    \begin{lemma}[\text{\cite[Proposition 1.8]{CG}}]\label{tangentconelemma}
        For every $x \in \Sigma$,  the tangent cone $T_xK$ is a nonempty, open convex cone.
    \end{lemma}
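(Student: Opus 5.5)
We must show that for $x\in\Sigma$ the tangent cone $T_xK$ is nonempty, open and convex. The plan is to work entirely in $T_xM$ through $\exp_x$, which by (iii) is a diffeomorphism from $B_{4R}(0)$ onto the strongly convex ball $B_{4R}(x)$. Radial segments from $0$ then correspond to the unique minimizing geodesics from $x$. By $(\star)$, any minimizing geodesic between two points of $K$ stays in $K$; inside $B_{4R}(x)$ this means the pulled-back set $\tilde K:=\exp_x^{-1}(K\cap B_{4R}(x))$ is star-shaped with respect to $0$. The cone property of $T_xK$ is immediate from the definition, since replacing $v$ by $\lambda v$ only rescales the parameter $s$.

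\emph{Nonemptiness.} Since $K$ is the closure of its interior and $x\in\Sigma$, there are points $y\in\mathrm{int}K$ arbitrarily close to $x$. Take such a $y$ and a small ball $B_\epsilon(y)\subseteq\mathrm{int}K$, and let $v=\exp_x^{-1}(y)$. Every point $\exp_x(sv)$ with $s\in(0,1]$ lies on the minimizing geodesic from $x$ to $y$, so it lies in $K$ by $(\star)$. To see that these points lie in the interior, consider the geodesics from $x$ to all points of $B_\epsilon(y)$. They sweep out a neighbourhood of each point $\exp_x(sv)$, $s\in(0,1]$, because $\exp_x$ is a local diffeomorphism and the map $w\mapsto\exp_x(sw)$ is open. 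All of these geodesics lie in $K$ by $(\star)$. Hence $\exp_x(sv)\in\mathrm{int}K$, and $v\in T_xK$.

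\emph{Openness.} The same sweeping argument applies to any $v\in T_xK$. Fix $t$ small with $y=\exp_x(tv)\in\mathrm{int}K$, and choose $\epsilon$ with $B_\epsilon(y)\subseteq \mathrm{int}K$. For $w$ close to $v$, the point $\exp_x(tw)$ lies in $B_\epsilon(y)$. By the previous paragraph, $\exp_x(sw)\in\mathrm{int}K$ for all $s\in(0,t]$, so $w\in T_xK$.

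\emph{Convexity.} This is the main obstacle, because convexity is not linear in the chart. Let $v,w\in T_xK$ and $u=(1-\lambda)v+\lambda w$; we must show $u\in T_xK$. Pick a small $s$ with $p=\exp_x(sv)$ and $q=\exp_x(sw)$ in $\mathrm{int}K$, together with balls around each contained in $\mathrm{int}K$. The minimizing geodesic from $p$ to $q$ lies in $K$ by $(\star)$. In normal coordinates the metric is Euclidean up to $O(|z|^2)$, so after rescaling by $s$ this geodesic is $C^1$-close, with error $o(s)$, to the straight segment from $sv$ to $sw$. In particular it passes within $o(s)$ of $\exp_x(su)$.

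Robustness comes from replacing $p$ and $q$ by all points in their small interior balls, whose radii are comparable to $s$. The resulting family of geodesics covers a neighbourhood of $\exp_x(su)$ of radius comparable to $s$, and this neighbourhood is contained in $K$. Hence $\exp_x(su)\in\mathrm{int}K$ for all small $s$, which gives $u\in T_xK$. The delicate point is making this uniform in $s$. To do so, I would rescale the normal coordinates by $1/s$ and use the smooth convergence of the rescaled metrics to the Euclidean one, so that geodesics converge to segments and the covering argument passes to the limit uniformly.
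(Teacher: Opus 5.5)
The paper gives no proof of this lemma; it only cites \cite[Proposition 1.8]{CG}. Your self-contained route is a reasonable alternative. It rests on the star-shapedness of $\tilde K=\exp_x^{-1}(K\cap B_{4R}(x))$ about $0$ and uses only $(\star)$, full-dimensionality and property (iii), so nobody has to check that $(\star)$ fits Cheeger--Gromoll's framework. The cone property, nonemptiness and openness are correct and complete as written. The fact doing the work there is $\sigma\cdot\mathrm{int}\tilde K\subseteq\mathrm{int}\tilde K$ for $\sigma\in(0,1]$, which holds because $\exp_x$ is a diffeomorphism on $B_{4R}(0)$.

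The convexity step, however, is only a sketch, and two of its assertions are not proved.

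\emph{Radii comparable to $s$.} If you fix $s$ first and then pick balls around $\exp_x(sv)$ and $\exp_x(sw)$, nothing stops their radii from being much smaller than $s^2$. In that case a geodesic passing within $O(s^2)$ of $\exp_x(su)$ tells you nothing. You must use star-shapedness again here. Choose $t_0,\eta$ with $B_\eta(t_0v)\cup B_\eta(t_0w)\subseteq\tilde K$. Then $B_{s\eta/t_0}(sv)\cup B_{s\eta/t_0}(sw)\subseteq\tilde K$ for all $s\le t_0$. By Lemma~\ref{normallemma}, the Riemannian balls $B_{cs}(p)$ and $B_{cs}(q)$ therefore lie in $K$, with $c$ independent of $s$.

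\emph{The covering claim.} This does not follow from $C^0$-closeness of geodesics to segments. You would need a degree or quantitative inverse-function argument, and saying the covering argument ``passes to the limit'' does not supply one. It is simpler to invert explicitly, varying only $p$. Fix $\lambda\in(0,1)$. The minimizing geodesic $\gamma$ from $p$ to $q$ satisfies $\gamma(\lambda)=\exp_q\bigl((1-\lambda)\exp_q^{-1}p\bigr)$. Given $z$, set $p':=\exp_q\bigl(\tfrac{1}{1-\lambda}\exp_q^{-1}z\bigr)$. In normal coordinates at $q$ this map is a dilation, so Lemma~\ref{normallemma} gives $d(p',p)\le\tfrac{1+O(s^2)}{1-\lambda}\,d(z,\gamma(\lambda))$. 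Hence $d(z,\gamma(\lambda))<(1-\lambda)cs/2$ forces $p'\in B_{cs}(p)\subseteq K$. Since $z$ lies on the minimizing geodesic from $p'$ to $q$, $(\star)$ gives $z\in K$.

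Lemma~\ref{lem:normal-geodesic-estimate}(ii) gives $d(\exp_x(su),\gamma(\lambda))=O(s^2)$. So for all small $s$, the point $\exp_x(su)$ lies in the open ball $B_{(1-\lambda)cs/2}(\gamma(\lambda))\subseteq K$, and therefore $u\in T_xK$. This also shows $u\neq0$, since otherwise $x\in\mathrm{int}K$. With these two points filled in, your argument is a complete proof.
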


    \begin{lemma}\label{lem:convexchart}
        For every $x \in \Sigma$ there exists a neighborhood $V_x \ni x$ and a coordinate chart on the set $V_x$ in which $K \cap V_x$ is a convex set with respect to the Euclidean metric.
    \end{lemma}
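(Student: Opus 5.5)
We want a chart near $x\in\Sigma$ in which $K\cap V_x$ is Euclidean-convex. The plan is to use the interior tangent cone (Lemma \ref{tangentconelemma}) to pick a direction, move slightly into $\mathrm{int}K$, and straighten the $C^{1,1}$ foliation of $M\setminus K$ by the level sets of $r$.

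\textbf{Step 1 (reference point).} By Lemma \ref{tangentconelemma}, $T_xK$ is a nonempty open convex cone. Choose $v\in T_xK$ with $|v|=1$. For small $\epsilon>0$ the point $p=\exp_x(\epsilon v)$ lies in $\mathrm{int}K$, and some small ball $B_\rho(p)$ is contained in $K$. Since $T_xK$ is open, after shrinking $\epsilon$ we may take $\rho$ comparable to $\epsilon$.

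\textbf{Step 2 (radial graph).} Work in normal coordinates $\exp_p$ on $B_{4R}(p)$, which is legitimate by property (iii). By \eqref{eq:Kcondition}, and since minimizing geodesics in a strongly convex ball are unique, $K$ is star-shaped about $p$ inside the ball. Near $x$, the boundary $\Sigma$ is then a radial graph $\{\exp_p(\phi(\theta)\theta)\}$, and $\phi$ is Lipschitz because $B_\rho(p)\subseteq K$ (a cone condition).

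\textbf{Step 3 (convexity in a chart).} Radial graphs alone do not give Euclidean convexity in normal coordinates, since geodesics are not straight lines there. Instead I would use that $K$ has positive reach locally: property (i) gives a unique nearest point for every point of $U$. Lytchak's theorem \cite{Lyt} (sets of positive reach in Riemannian manifolds are locally bi-Lipschitz $C^{1,1}$-convex in suitable charts) then directly yields the lemma. So the plan is to verify that $K$ has positive reach in the sense of \cite{Lyt}, which follows from (i) and (ii) (unique projection on $U$ and $r\in C^{1,1}$ on $U\setminus K$), and then invoke \cite{Lyt}.

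\textbf{Fallback without \cite{Lyt}.} Take normal coordinates at $x$ and add a quadratic correction. In these coordinates the metric is $\delta_{ij}+O(|y|^2)$ and geodesics deviate from straight lines by $O(|y|^3)$ at scale $|y|$. The set $K$ is locally an intersection of the sublevel sets $\{r_q\le \text{const}\}$ of $r$ at exterior points $q$. The function $r$ is semiconcave outward with curvature bounded below via (ii), so $\Hess r\ge -C$ on $U\setminus K$. A map of the form $y\mapsto y+A|y|^2 y$, or a similar adjustment of $r$ by $+C'|y|^2$, converts this into Euclidean convexity of the sublevel set $\{r+C'|y|^2\le 0\}$. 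I expect this to be the main obstacle: one must show that the sets $\{r\le 0\}$, whose Euclidean Hessian in coordinates is bounded below only by $-C$, become convex after a chart change. The bound on the Christoffel symbols and the $C^{1,1}$ bound on $r$ from (ii) control the error. The cleanest route is therefore to cite \cite{Lyt}, as the introduction indicates.
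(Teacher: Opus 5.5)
Your main route (positive reach from property (i), then \cite{Lyt}) is the same as the paper's. As you state it, though, there is a gap in the regularity of the chart. The result you invoke, the main theorem of \cite{Lyt}, only produces a $C^{1,1}$ (bi-Lipschitz) chart in which $K\cap V_x$ is convex. That is not enough here: the lemma is meant to give a smooth coordinate chart, and its applications use this.

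In Lemma \ref{lem:alexandrov}, Alexandrov second-order differentiability of the convex boundary in the chart must be transferred to the defining function $\psi$ on $M$ at $\cH^{n-1}$-almost every point of $\Sigma$. A $C^{1,1}$ chart is twice differentiable only Lebesgue-almost everywhere, which says nothing on the Lebesgue-null set $\Sigma$. For example, $y\mapsto y+y^n|y^n|\,e_n$ is $C^{1,1}$ and bi-Lipschitz but twice differentiable at no point of $\{y^n=0\}$. Likewise, Lemma \ref{rhointegrablelemma} compares Euclidean second fundamental forms of images of Riemannian balls under the chart, which requires $C^2$ control.

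The missing idea is where full-dimensionality enters. The paper uses Lemma \ref{tangentconelemma} to note that $T_xK$ is open, i.e.\ $K$ has full dimension $n$ at $x$. It then appeals to the full-dimensional case treated in Section 5.2 of \cite{Lyt}, whose proof yields a \emph{smooth} chart. You do invoke Lemma \ref{tangentconelemma}, but only to set up the radial-graph construction of Steps 1--2. You then abandon that construction, and it plays no role in your conclusion.

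Your fallback does not close the gap either. Convexity of $\{r+C'|y|^2\le 0\}$ is a statement about a different, strictly smaller set than $K\cap V_x$. The proposed change of chart $y\mapsto y+A|y|^2y$ is never shown to convexify $K$ itself.
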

    \begin{proof}
        Being compact and convex, the set $K$ has positive reach, i.e. there exists a uniform neighborhood of $K$ on which the nearest-point projection onto $K$ is single valued, see property (i) above. Let $x \in \Sigma$. By Lemma \ref{tangentconelemma}, the tangent cone $T_xK$ is open, i.e. has full dimension $n$. Hence, by \cite[Section 5.2]{Lyt}, there exists a neighborhood $V_x \ni x$ and a smooth coordinate chart on $V_x$ in which $K \cap V_x$ is convex in the Euclidean sense (the main theorem in \cite{Lyt} asserts the existence of a $C^{1,1}$ chart with this property, but the proof in the full-dimensional case given in Section 5.2 provides a smooth chart).
    \end{proof}

    Let $SM$ denote the unit tangent bundle of $M$:
    \[
        SM : = \{v \in TM : |v| = 1\}.
    \]
    A unit tangent vector $u \in S_xM$ at a point $x \in \Sigma$ is called an \emph{outer normal to $K$ at $x$} if its inner product with every vector in the tangent cone $T_xK$ is nonpositive. Thus the set of all outer unit normals to $K$ at $x$, which we denote by
    \[
        N_x : = \left\{u \in S_xM : \left<u,v\right> \le 0\quad \forall v \in T_xK\right\},
    \]
    is the intersection of the unit sphere $S_xM$ with the polar cone to the tangent cone $T_xK$. 

    \medskip
    Let ${\Sigma'}$ denote the set of boundary points $x \in \Sigma$ for which the set $N_x$ is a singleton, and for $x \in \Sigma'$ denote the unique element of $N_x$ by $\nu\vert_x$. Thus
    \[
        \Sigma' : = \left\{x \in \Sigma : \#N_x = 1\right\} = \left\{x \in \Sigma : N_x = \{\nu\vert_x\}\right\}.
    \]

    If $r$ is differentiable at a point $x \in \Sigma$, then $x \in \Sigma'$ and 
    \begin{equation}\label{gradienteq}
        \nabla r\vert_x = \nu\vert_x.
    \end{equation}
    
    Indeed, if $u \in N_x$ then $r(\exp_x(su)) = s$ for all $s \in [0,R)$, see \cite{Wal74}, so $dr\vert_x(u) = 1$; since $r$ is $1$-Lipschitz, $|\nabla r\vert_x| \le 1$, which forces $\nabla r\vert_x = u$. 

    \medskip
    For $x \in \Sigma'$, we denote by $\gamma_x$ the \emph{perpendicular to $K$ at $x$}, which is the unit-speed geodesic
    \begin{equation}\label{eq:perpendicular}
        \gamma_x(t) : = \exp_x(t\cdot \nu\vert_x), \qquad -4R < t < 4R.
    \end{equation}

    \subsection{Second fundamental form and Gauss curvature}\label{sec:sff}

    A function $f:M\to \RR$ is said to be \emph{differentiable twice in the sense of Alexandrov} at a point $x \in M$ if it is differentiable at $x$ and there exists a quadratic form $\HH$ on $T_xM$ such that as $v \to 0$ in $T_xM$,
    \begin{equation}\label{alexandroveq0}
        f(\exp_x(v)) = f(x) + df\vert_x(v) + \frac12\cdot \HH(v) + o(|v|^2).
    \end{equation}
    In this case we denote the quadratic form $\HH$ by $\Hess f\vert_x$ and call it the \emph{Hessian} of the function $f$ at the point $x$. 

    \medskip
    The boundary of a convex body in $\RR^n$ is locally the graph of a convex function, which by Alexandrov's theorem (see e.g. \cite{BCP} or \cite{EG}) is twice differentiable almost everywhere in the sense of Alexandrov. We need a similar result in our setting, which we formulate using the signed distance function. We denote by 
    \[
        \Sigma'' \subseteq \Sigma
    \]
    the set of points where the signed distance function $r$ is differentiable twice in the sense of Alexandrov. 

    \begin{lemma}\label{lem:alexandrov}
        $\cH^{n-1}(\Sigma \setminus \Sigma'') = 0$.
    \end{lemma}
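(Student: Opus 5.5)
The plan is to localize with the convex chart of Lemma \ref{lem:convexchart} and reduce to the Euclidean Alexandrov theorem. Since $\Sigma$ is compact, it is covered by finitely many neighborhoods $V_x$, so it suffices to show that $\cH^{n-1}((\Sigma\cap V_x)\setminus\Sigma'')=0$ for each $x\in\Sigma$. The chart is smooth, hence bi-Lipschitz on compact subsets, so it preserves $\cH^{n-1}$-null sets. We may therefore work in coordinates, where $K\cap V_x$ is Euclidean-convex with nonempty interior. Shrinking $V_x$ and rotating coordinates, we write $\Sigma\cap V_x$ as the graph $\{x_n=f(x')\}$ of a convex function $f$ on an open set $W\subseteq\RR^{n-1}$, with $K$ lying above the graph. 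The graph map $x'\mapsto(x',f(x'))$ is locally Lipschitz. By Alexandrov's theorem, $f$ is twice differentiable for a.e.\ $x'\in W$, and the image of the exceptional $\mathcal L^{n-1}$-null set is $\cH^{n-1}$-null. It then remains to show: if $f$ is twice differentiable at $x_0'$, then $r$ is twice differentiable in the sense of Alexandrov at $p=(x_0',f(x_0'))$.

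For this step I would compare $r$ with signed distance functions of smooth hypersurfaces touching $\Sigma$ at $p$. Alexandrov differentiability of $f$ at $x_0'$ means $f(x')=f(x_0')+\nabla f(x_0')\cdot h+\tfrac12 A h\cdot h+o(|h|^2)$. For every $\eps>0$ consider the smooth graphs $f^\pm_\eps(x')=f(x_0')+\nabla f(x_0')\cdot h+\tfrac12 Ah\cdot h\pm\eps|h|^2$. Near $p$ these trap $\Sigma$ between them: $f^-_\eps\le f\le f^+_\eps$ for $|h|<\rho(\eps)$. Let $K^\pm_\eps$ denote the corresponding epigraphs, pulled back to $M$. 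Near $p$ we have $K^+_\eps\subseteq K\subseteq K^-_\eps$, and all three sets share the common boundary point $p$ and the same normal. The signed distance functions $r^\pm_\eps$ to the smooth hypersurfaces are smooth near $p$ and satisfy $r^-_\eps\le r\le r^+_\eps$ in a small neighborhood of $p$.

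The containment has to be justified in the Riemannian metric, which requires some care. Distances to $\Sigma$ from points near $p$ are realized by nearest points that are near $p$, using property (i) and continuity of $\Pi$. Points on the outside are handled by monotonicity of $d(\cdot,K)$ under inclusion, and points on the inside by monotonicity of $d(\cdot,M\setminus K)$, after localizing all sets to a small ball. Since $r^+_\eps$ and $r^-_\eps$ agree at $p$ to first order, and their Hessians at $p$ differ by $O(\eps)$ (smooth dependence of the Hessian of a distance function on the second fundamental form, computed in the smooth chart), we get
\[
r(\exp_p v)=d r^{+}_\eps|_p(v)+\tfrac12\Hess r^{+}_\eps|_p(v)+O(\eps)|v|^2+o(|v|^2).
\]
The Hessians $\Hess r^{\pm}_\eps|_p$ converge as $\eps\to0$ to the Hessian $H$ associated with the osculating quadric $f^0$. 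This gives $r(\exp_p v)=dr|_p(v)+\tfrac12 H(v)+o(|v|^2)$, which is the claim.

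I expect the main obstacle to be this comparison step. Two points need care: that the two-sided sandwich $r^-_\eps\le r\le r^+_\eps$ really holds in a full neighborhood of $p$ (on both sides of $\Sigma$, with correct localization of nearest points), and the uniform control of the Hessians of $r^\pm_\eps$ at $p$ in terms of $\eps$. For the latter, I would use that the Hessian of the signed distance to a smooth hypersurface at a boundary point is determined by its second fundamental form and vanishes in the normal direction, and that the second fundamental forms of the quadric graphs depend continuously on $\eps$.
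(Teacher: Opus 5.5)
Your proposal is correct. The reduction step matches the paper's proof of Lemma \ref{lem:alexandrov}: a finite cover by the convex charts of Lemma \ref{lem:convexchart}, Alexandrov's theorem for the convex graph function, and the fact that Lipschitz images of null sets are null. The difference is in how you transfer second-order differentiability from the boundary to $r$, which the paper isolates as Lemma \ref{lem:sdf}.

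The paper works at each small scale $\delta$ in normal coordinates at $y$:
\begin{enumerate}[(1)]
\item it traps $K\cap B_\delta(0)$ between the shifted sublevel sets $P_{\pm\eps\delta^2}$ of the second-order Taylor polynomial of a defining function $\psi$;
\item it converts Riemannian to Euclidean distances with error $O(\delta^3)$ via Lemma \ref{normallemma};
\item it uses the level-set estimate $r_t=r_0+O(t)$ to get $r=r_0+o(\delta^2)$ on $B_{\delta/4}(0)$, where $r_0$ is the Euclidean signed distance to the osculating quadric.
\end{enumerate}

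You instead fix $\eps$ and trap $\Sigma$ between two smooth hypersurfaces through $p$ with the same tangent plane, namely the osculating quadric perturbed by $\pm\eps|h|^2$. You then squeeze $r^-_\eps\le r\le r^+_\eps$ on an $\eps$-dependent neighborhood of $p$, and let $\eps\to0$, using that $\Hess r^\pm_\eps\vert_p$ is determined by the second fundamental form and so lies within $O(\eps)$ of $H$.

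What each route buys:
\begin{itemize}
\item Your route stays in the Riemannian metric. Because your comparison sets touch $\Sigma$ at $p$, it needs neither the Euclidean--Riemannian distance comparison nor the estimate $r_t=r_0+O(t)$. The price is reliance on standard facts: the signed distance to a smooth hypersurface is smooth near it, its Hessian there equals the second fundamental form and vanishes in the normal direction, and this depends continuously on the $2$-jet of the hypersurface.
\item The paper's route identifies the Hessian of $r$ explicitly and uses only Euclidean facts plus Lemma \ref{normallemma}.
\end{itemize}

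Two small presentation points. First, in your displayed expansion the $o(|v|^2)$ term depends on $\eps$. The conclusion should therefore be stated as $\limsup_{v\to 0}|v|^{-2}\bigl|r(\exp_p v)-dr\vert_p(v)-\tfrac12 H(v)\bigr|\le C\eps$ for every $\eps>0$. Second, localizing nearest points needs only the bound $|r(x)|\le d(x,p)$ and its analogue for $r^\pm_\eps$. Property (i) and continuity of $\Pi$ are not needed, and they say nothing about nearest points of $M\setminus K$ when $x\in K$.
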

    \begin{proof}
        By compactness, it suffices to prove that for every $x \in \Sigma$ there exists a neighborhood $V \ni x$ such that for $\cH^{n-1}$-almost every $y \in \Sigma \cap V$, the function $r$ is twice differentiable in the sense of Alexandrov at the point $y$.
        
        \medskip
        Let $x \in \Sigma$. By Lemma \ref{lem:convexchart}, we can find a neighborhood $V \ni x$ admitting a coordinate chart in which $K \cap V$ is convex \emph{in the Euclidean sense}. By Alexandrov's theorem and the fact that the Euclidean and Riemannian $(n-1)$-dimensional Hausdorff measures on $\Sigma \cap V$ are mutually absolutely continuous in this chart, for $\cH^{n-1}$-almost every $y \in \Sigma\cap V$ there exists a neighborhood $V_y \ni y$ and a function $\psi : V_y \to \RR$ such that $K \cap V_y = \psi^{-1}((-\infty, 0])$, and such that $\psi$ is differentiable twice at $y$ in the sense of Alexandrov, and $d\psi\vert_y  \ne 0$. By Lemma~\ref{lem:sdf}, the signed distance function $r$ is differentiable twice in the sense of Alexandrov at every such point $y$.
    \end{proof}
    By \eqref{gradienteq} and \eqref{alexandroveq0}, for every $x \in \Sigma''$,

    \begin{equation}\label{alexandroveq}
        r(\exp_x(w)) = \left<w,\nu\vert_x\right> + \frac12\cdot\Hess r\vert_x(w) + o(|w|^2) \qquad \text{ as} \qquad T_xM \ni w \to 0.
    \end{equation}

    The expansion \eqref{alexandroveq} and convexity of $K$ imply that $\Hess r\vert_x$ is positive semidefinite. Moreover, since $r(\exp_x(t\nu\vert_x))=t$ for $t \in [0,R)$, the normal $\nu\vert_x$ lies in the kernel of $\Hess r\vert_x$. The quadratic form 
    \[
        \sff\vert_x : = \Hess r\vert_{\nu\vert_x^\perp},
    \]
    i.e., the restriction of the Hessian of $r$ to the orthogonal complement of the normal $\nu\vert_x$ in $T_xM$, is called the \emph{second fundamental form} of $\Sigma$ at $x$.  By Lemma \ref{lem:alexandrov}, the second fundamental form is defined $\cH^{n-1}$-almost everywhere on $\Sigma$. Its eigenvalues $\kappa_1,\dots,\kappa_{n-1}$ are called the \emph{principal curvatures} of $\Sigma$, and their product
    \[
        \kappa : = \kappa_1\cdots\kappa_{n-1} = \det\sff
    \]
    is called the \emph{Gauss curvature} of $\Sigma$,  which is a nonnegative, Borel measurable function on $\Sigma$ (measurability follows from the fact that $\Hess r$ is the absolutely continuous part of the distributional Hessian of $r$, see e.g. \cite{EG}).

    \begin{remark*}\normalfont
        A more familiar definition of the second fundamental form is
        \[
            \sff(w) : = \left<\nabla_w\nu,w\right>, \qquad w \in T\Sigma,
        \]
        where $\nabla_w\nu$ is the covariant derivative of the outer unit normal $\nu$ in the direction $w$. Since the outer unit normal is the gradient of the signed distance function, this definition indeed coincides with ours (certainly when $\Sigma$ is $C^2$, and also in general with the appropriate definition of a covariant derivative, see e.g. \cite[Section 14]{Vil}).
    \end{remark*}

    \subsection{Integration over tubes}

    The following formula is classical when $M = \RR^n$  \cite[Chapter 4]{Sc} and for general $M$ when $\partial K$ is smooth \cite{We,Sa,Gr} (see also \cite{ST}). The next (and more interesting) terms of the expansion involve curvature measures and will not be needed here. We denote the $k$-th dimensional Hausdorff measure by $\cH^k$.

    \begin{proposition}\label{tubeprop}
        For every $0 < t < R$ and every Borel function $f:M \to [0,1]$,
        \begin{align}\label{tubeq}
            \int_{U_t}fd\cH^n & = \int_Kfd\cH^n +  \int_{\Sigma'}\left(\int_0^tf(\gamma_x(s))ds\right)d\cH^{n-1}(x) + O(t^2),
        \end{align}
        where the implied constant depends only on the set $K$.
    \end{proposition}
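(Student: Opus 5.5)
Since $f$ takes values in $[0,1]$, the part of $\int_{U_t} f$ over $K$ is exact. So the claim reduces to an estimate on the shell $U_t\setminus K$:
\[
\int_{U_t\setminus K} f\,d\cH^n=\int_{\Sigma'}\int_0^t f(\gamma_x(s))\,ds\,d\cH^{n-1}(x)+O(t^2).
\]
The plan is to parametrize the shell by normal geodesics and compare with a product measure. Consider the normal bundle map
\[
\Phi:\ \{(x,u,s): x\in\Sigma,\ u\in N_x,\ 0<s<t\}\to U_t\setminus K,\qquad \Phi(x,u,s)=\exp_x(su).
\]
By properties (i) and (ii), $\Phi$ is a bijection, with inverse $y\mapsto(\Pi y,\nabla r|_y,r(y))$. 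Indeed, for $y\in U\setminus K$ the vector $\nabla r|_y$ is the velocity at $y$ of the unique minimizing geodesic from $\Pi y$.

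The fibers over $\Sigma\setminus\Sigma'$ are treated separately. The set of $y$ with $\Pi y\in\Sigma\setminus\Sigma'$ is handled by passing to the parallel hypersurface $\Sigma_t$. The map $y\mapsto\Pi y$ restricted to $\Sigma_s$ is Lipschitz, since $\Pi$ is locally Lipschitz on $U\setminus K$ by positive reach.

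\textbf{Step 1 (coarea).} Since $r$ is $C^{1,1}$ on $U\setminus K$ with $|\nabla r|=1$, the coarea formula gives
\[
\int_{U_t\setminus K}f\,d\cH^n=\int_0^t\int_{\Sigma_s}f\,d\cH^{n-1}\,ds .
\]

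\textbf{Step 2 (transport $\Sigma_s$ to $\Sigma$).} For fixed $s$, map $\Sigma_s\to\Sigma$ by $\Pi$. Let $G_s\subseteq\Sigma_s$ be the set of points whose projection lies in $\Sigma'$. On $G_s$ the map $\Pi$ is injective with inverse $x\mapsto\gamma_x(s)$. We need two bounds on its Jacobian $J_s$, valid uniformly $\cH^{n-1}$-a.e.:
\[
J_s\le 1+Cs\quad\text{(a)},\qquad \cH^{n-1}(\Sigma_s\setminus G_s)\le Cs\quad\text{(b)}.
\]
Granted (a) and (b), the area formula gives
\[
\int_{G_s}f\,d\cH^{n-1}=\int_{\Sigma'}f(\gamma_x(s))\,(1+O(s))\,d\cH^{n-1}(x).
\]
Since $0\le f\le1$, the error terms contribute $O(s)\bigl(\cH^{n-1}(\Sigma)+\cH^{n-1}(\Sigma_s)\bigr)=O(s)$. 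We need $\cH^{n-1}(\Sigma)<\infty$ and a uniform bound on $\cH^{n-1}(\Sigma_s)$; both follow from compactness, from Lemma \ref{lem:convexchart} with Euclidean convex-body area bounds, and from the Lipschitz bound on $\Pi$. Integrating in $s$ over $(0,t)$ yields the $O(t^2)$ error.

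\textbf{Step 3 (Jacobian estimate, the main obstacle).} First, a lower bound on $J_s$ keeps the area formula two-sided: $\cH^{n-1}(\Sigma)$ controls the integral over $\Sigma'$ and the Jacobian factor is uniformly $1+O(s)$. For the two-sided bound, differentiate the flow $x\mapsto\gamma_x(s)$ along $\Sigma$ at points of $\Sigma''$, which have full measure by Lemma \ref{lem:alexandrov}. Its derivative solves a Jacobi/Riccati equation with initial data $\sff\ge0$. This gives Jacobian $\det(I+s\,\sff)+O(s)$ via Rauch comparison, using the bounded curvature on the compact set $\overline U$.

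The difficulty is that $\sff$ may be unbounded. Then $\det(I+s\sff)$ is not $1+O(s)$ pointwise, and an upper bound on the area element of $\Sigma_s$ pulled back to $\Sigma$ fails. The fix is to reverse direction and use $\Pi:\Sigma_s\to\Sigma$. It is the inverse map, and it is the one appearing in the area formula when integrating over $\Sigma_s$. Its Jacobian equals $\det(I+s\sff)^{-1}(1+O(s))\le 1+Cs$; no lower bound is needed for the direction being integrated.

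So I would write
\[
\int_{\Sigma_s}f=\int_{\Sigma'}f(\gamma_x(s))\,\theta_s(x)\,d\cH^{n-1}(x),\qquad \theta_s\ge 1-Cs,
\]
where $\theta_s$ is the Jacobian of $x\mapsto\gamma_x(s)$. This gives the inequality in one direction. For the other direction, estimate $\int_{\Sigma'}\theta_s-1$ by total measures: $\int_{\Sigma'}\theta_s\,d\cH^{n-1}\le\cH^{n-1}(\Sigma_s)\le \cH^{n-1}(\Sigma)+Cs$. This follows from the Steiner-type Lipschitz bound $\cH^{n-1}(\Sigma_s)-\cH^{n-1}(\Sigma)=O(s)$, obtained from $\Vol(U_s\setminus K)=O(s)$ and monotonicity. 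Then $\int|\theta_s-1|=O(s)$, so with $0\le f\le 1$ the difference is $O(s)$. Integrating in $s$ completes the proof.
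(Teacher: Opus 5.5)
Your closing ``total measure'' bookkeeping is the right kind of argument. However, the inequality it rests on, $\cH^{n-1}(\Sigma_s)\le\cH^{n-1}(\Sigma)+Cs$, is where the whole difficulty lies, and your justification for it does not work.

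To see that this is the crux, let $J_s$ be the Jacobian of the Lipschitz map $\Pi|_{\Sigma_s}$. The area formula gives $\int_{\Sigma'}f(\gamma_x(s))\,d\cH^{n-1}(x)=\int_{G_s}f\,J_s\,d\cH^{n-1}$. Since $0\le f\le 1$, it follows that
\[
\Bigl|\int_{\Sigma_s}f\,d\cH^{n-1}-\int_{\Sigma'}f(\gamma_x(s))\,d\cH^{n-1}(x)\Bigr|\le\cH^{n-1}(\Sigma_s)-\cH^{n-1}(\Sigma)+2\int_{\Sigma_s}(J_s-1)_+\,d\cH^{n-1}.
\]
So the proposition follows from $J_s\le 1+Cs$, which is plausible, together with that area bound. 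Your (b) is also a consequence of the area bound.

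This is how Steps 2--3 should be organized. As written, ``$\int_{G_s}f=\int_{\Sigma'}f(\gamma_x(s))(1+O(s))$'' does not follow from an upper bound on $J_s$ alone. Moreover, ``$\int_{\Sigma_s}f=\int_{\Sigma'}f(\gamma_x(s))\,\theta_s$'' is false, since it omits two pieces:
\begin{itemize}
\item the set $\Sigma_s\setminus G_s$, which has measure of order $s$ over edges and vertices;
\item the part of $G_s$ where $J_s=0$, which has positive measure already for a $C^1$ planar boundary whose tangent angle has a Cantor-type singular part.
\end{itemize}

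The area bound $\cH^{n-1}(\Sigma_s)-\cH^{n-1}(\Sigma)=O(s)$ does not follow from $\Vol(U_s\setminus K)=\int_0^s\cH^{n-1}(\Sigma_\tau)\,d\tau=O(s)$ plus monotonicity. That identity only says that $\cH^{n-1}(\Sigma_\tau)$ stays bounded. A monotone function such as $\cH^{n-1}(\Sigma)+\sqrt\tau$ has bounded averages without being Lipschitz at $0$, and monotonicity itself is not evident off Euclidean space. In $\RR^n$ the bound comes from the Steiner polynomial; in general it is essentially the case $f\equiv1$ of the proposition, so your argument is circular at its key point.

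The missing idea is to parametrize all the $\Sigma_s$, including $s=0$, by a single compact Lipschitz manifold over which the area densities vary smoothly in $s$. The paper uses the normal cycle $N\subseteq SM$:
\begin{itemize}
\item $\int_{\Sigma_s}f=\int_N(f\circ\Exp_s)\,\tilde\omega_s$;
\item $\int_{\Sigma'}f(\gamma_x(s))\,d\cH^{n-1}(x)=\int_N(f\circ\Exp_s)\,\tilde\omega_0$, because $\tilde\omega_0$ vanishes a.e.\ on $N\setminus N'$;
\item $\tilde\omega_s-\tilde\omega_0=O(s)$, since both are restrictions of smooth, smoothly $s$-dependent forms on $SM$ to the compact Lipschitz submanifold $N$.
\end{itemize}
Unbounded or singular curvature of $\Sigma$ never has to be confronted, because it is absorbed by the Lipschitz structure of $N$.

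Within your own framework you could do the same with the $C^{1,1}$ hypersurface $\Sigma_{s_0}$ in place of $N$. Map it onto $\Sigma_s$ by $y\mapsto\exp_y(-(s_0-s)\nabla r|_y)$ for $0\le s\le s_0$; at $s=0$ this map is $\Pi$. The Jacobians are uniformly Lipschitz in $s$ because the shape operator of $\Sigma_{s_0}$ is bounded. This gives the area bound, and in fact the whole proposition directly, with no Riccati analysis at points of $\Sigma''$.
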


    In order to prove Proposition \ref{tubeprop} we need a short preparation. Let $\{\Phi_t\}_{t \in \RR}$ denote the geodesic flow on the unit tangent bundle $SM$. For all $t \in \RR$, let
    \[
        \cV_t : = \left\{v \in SM : \Phi_sv \text{ is defined for all $s \in [0,t]$}\right\}
    \]
    and define
    \[
        \Exp_t:\cV_t \to M, \qquad v \mapsto \exp(tv) = \pi(\Phi_tv),
    \]
    where 
    \[
        \pi : TM \to M
    \]
    is the natural projection. Note that $\bigcup_{t > 0}\cV_t = \bigcup_{t < 0} \cV_t = SM$ and that
    \[
        \Exp_0 = \pi.
    \]
    The \emph{normal cycle} of the set $K$ is the set of all normal vectors at its boundary points:
    \[
        N : = \bigcup_{x \in \Sigma}N_x \subseteq SM.
    \]
    By our choice of $R$, the normal cycle $N$ is contained in the set $\cV_{4R}$. Set also
        \[
            N' : = N\cap\pi^{-1}(\Sigma') = \left\{\nu\vert_x : x \in \Sigma'\right\}.
        \]

    \begin{theorem}[\text{\cite{Wal}, see also \cite[Proposition 4.3]{ST}}]\label{normalcycletheorem}
        The normal cycle $N$ is a compact $(n-1)$-dimensional oriented Lipschitz submanifold of the unit tangent bundle $SM$. For every $t \in (0,R)$, the map $\Exp_t$ restricts to an orientation-preserving bi-Lipschitz homeomorphism from $N$ to $\Sigma_t$.
    \end{theorem}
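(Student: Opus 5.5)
The strategy is to construct the inverse of $\Exp_t\vert_N$ explicitly from the gradient of the signed distance function $r$, and read off all the claimed properties of $N$ from the corresponding properties of the level set $\Sigma_t$. Fix $t\in(0,R)$. The argument has three parts: (a) $\Exp_t(N)\subseteq\Sigma_t$; (b) $\Exp_t\vert_N$ is a bijection onto $\Sigma_t$; (c) the map and its inverse are Lipschitz. The statements about $N$ itself (compact Lipschitz submanifold, oriented) are then transported from $\Sigma_t$.

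\emph{(a) Inclusion.} If $u\in N_x$ with $x\in\Sigma$, then $r(\exp_x(su))=s$ for $s\in[0,R)$, as recalled before \eqref{gradienteq} from \cite{Wal74}. In particular $\Exp_t(u)\in\Sigma_t$.

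\emph{(b) Surjectivity.} Let $y\in\Sigma_t$ and $x=\Pi y$, which is well defined by property (i). Let $\sigma$ be the unit-speed minimizing geodesic from $x$ to $y$, which is unique by property (iii), and put $u=\dot\sigma(0)$. I claim $u\in N_x$. If not, some $v\in T_xK$ satisfies $\langle u,v\rangle>0$. The first variation formula, applied to points $\exp_x(\epsilon v)\in\mathrm{int}K$, then gives $d(y,\exp_x(\epsilon v))<d(y,x)$ for small $\epsilon>0$. This contradicts $x=\Pi y$. Hence $y=\Exp_t(u)$ with $u\in N$.

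\emph{(b) Injectivity.} Suppose $\exp_x(tu)=\exp_{x'}(tu')=y$ with $u\in N_x$, $u'\in N_{x'}$. By (a), $d(y,x)=d(y,x')=t=d(y,K)$, so $x=x'=\Pi y$ by uniqueness of the nearest point. Both $s\mapsto\exp_x(su)$ and $s\mapsto\exp_x(su')$, $s\in[0,t]$, are minimizing geodesics from $x$ to $y$. By strong convexity of $B_{4R}(x)$ they coincide, so $u=u'$.

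This yields the explicit inverse
\[
\Exp_t\vert_N^{-1}(y)=\Phi_{-t}\bigl(\nabla r\vert_y\bigr),
\]
since $\nabla r\vert_y$ is the terminal velocity of the minimizing geodesic from $\Pi y$ to $y$. Here $r$ is differentiable at $y$ by (ii), and $\nabla r\vert_y$ is the velocity of that geodesic by the usual argument for distance functions.

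\emph{(c) Lipschitz bounds.} $\Exp_t=\pi\circ\Phi_t$ is smooth on the open set $\cV_t\supseteq\cV_{4R}\supseteq N$, and $N$ is compact. It is closed because limits of outer normals remain outer normals: the defining inequalities pass to the limit, and the tangent cones vary lower semicontinuously. Hence $\Exp_t\vert_N$ is Lipschitz. For the inverse, $\nabla r$ is Lipschitz on the compact set $\Sigma_t\subseteq U\setminus K$ by (ii), and $\Phi_{-t}$ is smooth, so the formula above is Lipschitz. I expect this to be the main step: it is where the $C^{1,1}$ regularity of $r$ away from $K$ is essential, and it is also the reason $t>0$ is needed. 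At $t=0$ the map degenerates to $\pi$, which collapses the normal cones at singular points.

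\emph{Consequences for $N$ and orientation.} Since $|\nabla r|=1$ on $U\setminus K$ and $r\in C^{1,1}$, the implicit function theorem shows that $\Sigma_t$ is a compact $C^{1,1}$ hypersurface. It is co-oriented by $\nabla r$, hence oriented using the orientation of $M$. Through the bi-Lipschitz homeomorphism $\Exp_t\vert_N$, $N$ is a compact $(n-1)$-dimensional Lipschitz submanifold of $SM$. Orient $N$ by pulling back the orientation of $\Sigma_{t_0}$ for one fixed $t_0$. For any other $s\in(0,R)$, the map $\Exp_s\circ\Exp_{t_0}\vert_N^{-1}:\Sigma_{t_0}\to\Sigma_s$ is the normal flow along $\nabla r$. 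This flow is a homotopy through bi-Lipschitz homeomorphisms, continuous in the parameter, and it carries $\nabla r$ to $\nabla r$, so it preserves the co-orientations. Therefore every $\Exp_s\vert_N$ is orientation-preserving with respect to this single orientation of $N$.
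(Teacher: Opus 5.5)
The paper does not prove this statement; it imports it from \cite{Wal} and \cite[Proposition 4.3]{ST}. Your argument is correct and is the standard one for sets of positive reach. The inverse $y\mapsto\Phi_{-t}(\nabla r\vert_y)$ is exactly the identity recorded in Lemma \ref{perplemma}. Its Lipschitz continuity on $\Sigma_t$ comes from property (ii), which is also why the statement is restricted to $t>0$.

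Two steps are only sketched and are worth tightening.

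\emph{Closedness of $N$.} You appeal to lower semicontinuity of tangent cones. That is true, but it needs the fact that a minimizing geodesic from a point of $K$ to an interior point of $K$ has its relative interior in $\mathrm{int}K$. You can avoid it: your parts (a) and (b) already show
\[
N=\{u\in SM : \pi(u)\in\Sigma,\ r(\Exp_t u)=t\}.
\]
Indeed, if $r(\exp_x(tu))=t$, then $x=\Pi(\exp_x(tu))$ and $s\mapsto\exp_x(su)$ is the minimizing geodesic, so your first-variation argument puts $u$ in $N_x$. This set is closed in the compact set of unit vectors over $\Sigma$, by continuity of $r\circ\Exp_t$.

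\emph{Orientation.} The homotopy argument is loose, because the targets $\Sigma_s$ change with $s$. A clean version goes as follows. Define $\Psi(y,s):=\exp_y\bigl((s-t_0)\nabla r\vert_y\bigr)$. By (a) and (b), $\Psi$ is a continuous bijection from $\Sigma_{t_0}\times(0,R)$ onto $U\setminus K$, hence a homeomorphism by invariance of domain. Orient $\Sigma_{t_0}\times(0,R)$ so that $\partial_s$ followed by a positive frame of $\Sigma_{t_0}$ is positive. Since $\nabla r$ is Lipschitz, $\Psi$ is differentiable at each point $(y,t_0)$ with differential $(\xi,\sigma)\mapsto\xi+\sigma\,\nabla r\vert_y$. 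So its local degree is $+1$ there, and hence everywhere, because every component meets the slice $\{s=t_0\}$. Let $\Psi_s$ be the analogous map based at $\Sigma_s$; it also has degree $+1$. Put $\phi_s=\Exp_s\circ\Exp_{t_0}\vert_N^{-1}$. Then $\Psi=\Psi_s\circ(\phi_s\times\mathrm{id})$, so $\phi_s$ has local degree $+1$. That is, $\phi_s$ is orientation-preserving for the co-orientations by $\nabla r$.
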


    \begin{lemma}[\text{\cite[Theorem 1]{Wal74}}]\label{perplemma}
        For every $v \in N$ and every $t \in (0,R)$,
        \[
            r(\Exp_t(v)) = t \qquad \text{ and } \qquad \nabla r\vert_{\Exp_t(v)} = \Phi_tv.
        \]
    \end{lemma}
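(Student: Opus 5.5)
The plan is to reduce both identities to one claim: for $v\in N_x$ and $t\in(0,R)$ we have $\Pi(\Exp_t(v))=x$. Given that claim, $r(\Exp_t(v))=d(\Exp_t(v),x)=t$. Indeed $\gamma(s)=\exp_x(sv)$, $s\in[0,t]$, is then minimizing: it lies in the ball $B_{4R}(x)$, where $\exp_x$ is a diffeomorphism, and that ball is strongly convex by (iii). The gradient identity follows from property (ii) and a first-variation argument. Since $r$ is differentiable at $y=\gamma(t)$, is $1$-Lipschitz, and increases at unit rate along $\gamma$ (because $r(\gamma(s))=s$), we get $dr\vert_y(\dot\gamma(t))=1$. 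Together with $|\nabla r\vert_y|\le 1$ this forces $\nabla r\vert_y=\dot\gamma(t)=\Phi_tv$. This is the same argument as for \eqref{gradienteq}.

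For the claim, I would first show that $I:=\{s\in[0,R): d(\exp_x(sv),K)=s\}$ is an interval containing $0$. If $s_0\in I$ and $s<s_0$, the triangle inequality gives
\[
d(\exp_x(sv),K)\ge s_0-(s_0-s)=s,
\]
and the reverse inequality holds because $x\in K$. So $I$ is an interval, and it is clearly closed. For openness at an interior point $s_0>0$ of $I$, I would use property (ii). On $U\setminus K$ the field $\nabla r$ is Lipschitz with $|\nabla r|=1$, and its integral curves are unit-speed geodesics along which $r$ grows at unit rate. They are the distance-realizing segments continued, which one checks at each point using the unique projection. At $y_0=\exp_x(s_0v)$ we have $\Pi y_0=x$, so $\nabla r\vert_{y_0}=\dot\gamma(s_0)$, by the argument above. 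Uniqueness of solutions of Lipschitz ODEs then shows that the integral curve through $y_0$ is $\gamma$ itself, as long as it stays in $U\setminus K$. Hence $r(\gamma(s))=s$ for $s$ slightly beyond $s_0$, and $I$ is open in $[0,R)$.

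The main obstacle is getting started: showing $I\ne\{0\}$, i.e.\ that $\Pi(\exp_x(sv))=x$ for some small $s>0$. I would argue by contradiction. Suppose $p_s:=\Pi(\exp_x(sv))\ne x$, and consider the small geodesic triangle with vertices $x$, $p_s$, $y_s=\exp_x(sv)$.
\begin{enumerate}
\item By \eqref{eq:Kcondition}, the segment from $x$ to $p_s$ lies in $K$, so its initial direction lies in the closure of $T_xK$. Since $v\in N_x$, the angle at $x$ is at least $\pi/2$.
\item By the first variation formula and minimality of $d(y_s,p_s)$ over $K$, the angle at $p_s$ is also at least $\pi/2$; here one uses \eqref{eq:Kcondition} again to move $p_s$ toward $x$ inside $K$.
\item The angle at $y_s$ is positive, since $p_s\ne x$ and $\exp_{y_s}$ is a diffeomorphism on $B_{4R}(y_s)$.
\item By a local comparison argument, with sectional curvature bounded on a compact neighbourhood of $K$, the angle sum of the triangle is at most $\pi+C\cdot\mathrm{Area}$, and the area is $O(s\cdot d(x,p_s))$.
\end{enumerate}
To turn these into a contradiction I would quantify the strict gain: $d(y_s,p_s)<s=d(y_s,x)$ forces the angle at $p_s$ to exceed the angle at $x$ by an amount comparable to $d(x,p_s)/s$. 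That gain should beat the curvature error for $s$ small. If this estimate proves too delicate, the fallback is to work in the chart of Lemma~\ref{lem:convexchart}, where $K$ is Euclidean-convex. There I would compare the Riemannian outer normal with the Euclidean one up to $O(s)$ errors, and apply the Euclidean fact that points along a normal ray project back to its base point.
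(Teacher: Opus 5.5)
Your reduction to showing $I=[0,R)$, the interval and closedness argument, and the continuation step are correct. In the continuation step, backward uniqueness for the Lipschitz field $\nabla r$ identifies each integral curve, near any of its points, with the projection segment ending there. So integral curves are geodesics, and the one through $\gamma(s_0)$ continues $\gamma$. This route differs from the paper, which gives no argument and simply cites \cite[Theorem 1]{Wal74}. Yours derives the lemma from properties (i)--(iii), although (ii) is itself taken from Walter.

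The gap is exactly in the step you flag as the main obstacle, $I\neq\{0\}$. The quantitative claim you propose for the contradiction is false. The inequality $d(y_s,p_s)<s$ does \emph{not} force the angle at $p_s$ to exceed the angle at $x$ by an amount $\gtrsim d(x,p_s)/s$. Already in the Euclidean plane, fix $s$ and $\ell=d(x,p_s)$ and set $d(y_s,p_s)=s-\eta$. The triangle is isosceles at $\eta=0$, so the difference of the base angles tends to $0$ as $\eta\to0$ while $\ell/s$ stays fixed. Hence items 2--4 plus the ``gain'' do not yield a contradiction as written. The fallback fails for another reason. In the chart of Lemma~\ref{lem:convexchart} the metric is not Euclidean, so an $O(s)$ comparison of Riemannian and Euclidean normals can only show that $\Pi(\exp_x(sv))$ is \emph{close} to $x$, never that it equals $x$.

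The missing ingredient is a hinge comparison, and with it your item 1 alone suffices. Suppose $p_s\neq x$. By item 1, the angle $\theta$ at $x$ in the triangle $x\,y_s\,p_s$ is at least $\pi/2$. Apply the left-hand inequality of Lemma~\ref{lem:smalltriangle} (Rauch/Toponogov comparison for an upper sectional curvature bound $k_+$, which may be taken positive). Take $a=s$, $b=\ell>0$ and $c=d(y_s,p_s)$, all at most $2s$, and $s$ small. Then $\cos_{k_+}(c)\le\cos_{k_+}(s)\cos_{k_+}(\ell)<\cos_{k_+}(s)$. Hence $d(y_s,p_s)>s\ge d(y_s,K)=d(y_s,p_s)$, a contradiction. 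This gives $[0,s_1]\subseteq I$ for some $s_1>0$, and your open--closed argument then yields $I=[0,R)$. The angle at $p_s$ and the area bound are not needed.

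Item 1 also needs a justification you did not supply: the initial direction of a geodesic from $x$ that stays in $K$ lies in $\overline{T_xK}$. This is true for full-dimensional $K$. In the chart of Lemma~\ref{lem:convexchart}, such a direction lies in the Euclidean tangent cone of $K$ at $x$, and that cone is the closure of its interior. Directions in its interior are directions of Riemannian geodesics entering $\mathrm{int}K$, i.e.\ they lie in $T_xK$. This does not follow from the definition of $T_xK$ alone.
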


    Let $\omega$ denote the Riemannian volume form on the manifold $M$. For each $t \in \RR$, define an $(n-1)$-form $\tilde\omega_t$ on $\cV_t$ by:
    \[
        \tilde\omega_t(\xi_1,\dots,\xi_{n-1}) : = \omega(\Phi_tv,d\Exp_t(\xi_1),\dots,d\Exp_t(\xi_{n-1})), \qquad \xi_1,\dots,\xi_{n-1} \in T_vSM, \, \, v \in \cV_t.
    \]
    \begin{lemma}\label{COVlemma}
        Let $t \in (0,R)$ and let $f : \Sigma_t \to \RR$ be an $\cH^{n-1}$-integrable function. Then
        \[
            \int_{\Sigma_t}f\,d\cH^{n-1} = \int_N(f\circ\Exp_t)\,\tilde\omega_t.
        \]
        Moreover, if $f:\Sigma \to \RR$ is an $\cH^{n-1}$-integrable function then
        \[
            \int_{\Sigma'} f\,d\cH^{n-1}=\int_{N'}(f\circ\pi)\,\tilde\omega_0.
        \]
    \end{lemma}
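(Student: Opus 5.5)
The first claim is an area formula on a Lipschitz manifold. By Theorem \ref{normalcycletheorem}, $\Exp_t : N \to \Sigma_t$ is an orientation-preserving bi-Lipschitz homeomorphism. The area formula (change of variables for Lipschitz maps between $(n-1)$-rectifiable sets) therefore gives
\[
    \int_{\Sigma_t} f\, d\cH^{n-1} = \int_N (f\circ\Exp_t)\,\Exp_t^*\sigma_t ,
\]
where $\sigma_t$ is the Riemannian area form of $\Sigma_t$, oriented by the unit normal $\nabla r$. This pullback is meaningful because $\Exp_t$ is Lipschitz on $N$, hence differentiable $\cH^{n-1}$-a.e. on $N$ in the tangential sense (Rademacher on a Lipschitz manifold). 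It remains to identify $\Exp_t^*\sigma_t$ with $\tilde\omega_t$ at $\cH^{n-1}$-a.e.\ $v\in N$.

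To do so I would take, at a point of differentiability, tangent vectors $\xi_1,\dots,\xi_{n-1}\in T_vN$. Then $d\Exp_t(\xi_i)\in T_{\Exp_t v}\Sigma_t$ wherever $\Sigma_t$ has a tangent plane; this holds because $r\circ \Exp_t \equiv t$ on $N$ by Lemma \ref{perplemma} and $r$ is $C^{1,1}$ on $U\setminus K$. Lemma \ref{perplemma} also gives $\nabla r|_{\Exp_t v} = \Phi_t v$, so the unit normal of $\Sigma_t$ at $\Exp_t v$ is exactly $\Phi_t v$. For any hypersurface with unit normal $\nu$, the area form satisfies $\sigma(w_1,\dots,w_{n-1}) = \omega(\nu,w_1,\dots,w_{n-1})$ on tangent vectors, and this is exactly how $\tilde\omega_t$ is defined. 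The orientation of $N$ is the one from Theorem \ref{normalcycletheorem}, and orientation preservation of $\Exp_t$ makes the signs agree, so $\tilde\omega_t$ is nonnegative on oriented frames of $N$.

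For the second claim I would restrict to $N'$, where $\pi=\Exp_0$ is injective and $\pi(N')=\Sigma'$. I would apply the same area formula for the Lipschitz map $\pi|_{N'}$ onto $\Sigma'$. The main obstacle is that $\pi$ need not be bi-Lipschitz, since $\Sigma$ may have singular points. One way around this is to use the general area formula with Jacobian $J\pi$: for an injective Lipschitz map it holds without bi-Lipschitz assumptions, giving $\int_{\Sigma'}f = \int_{N'} (f\circ\pi) J\pi\, d\cH^{n-1}$. At a.e.\ point of $N'$, $d\pi(T_vN)$ is either degenerate, so both sides vanish, or is an $(n-1)$-plane. In the latter case, the Lipschitz surface $\Sigma$ has approximate tangent plane $v^\perp$ at $\cH^{n-1}$-a.e.\ point of $\Sigma'$; this follows from \eqref{gradienteq} and Lemma \ref{lem:convexchart}, since locally $\Sigma$ is a convex Lipschitz graph. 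Then $\omega(v,d\pi\xi_1,\dots)$ is again the area form of $\Sigma$ evaluated on $d\pi\xi_i$, and it equals $J\pi$ times the volume of the frame.

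One more point needs checking: the points of $\Sigma\setminus\Sigma'$ must not contribute. This is automatic, because $\cH^{n-1}(\Sigma\setminus\Sigma')=0$ (it is contained in $\Sigma\setminus\Sigma''$, by Lemma \ref{lem:alexandrov}). Alternatively, one can get the second statement by letting $t\searrow0$ in the first, using the continuity of $\tilde\omega_t$ in $t$ and dominated convergence, with a uniform Lipschitz bound for $\Exp_t$ on $N$. I would try the direct area-formula route first and keep the limiting argument as backup.
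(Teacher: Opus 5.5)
Your proposal is correct and follows essentially the same route as the paper. The paper applies the change-of-variables formula for Lipschitz maps, identifies $\tilde\omega_t$ with $\Exp_t^*(\iota_{\nabla r}\omega)$ via Lemma~\ref{perplemma}, and on $N'$ identifies $\tilde\omega_0$ with $\pi^*(\iota_{\nu}\omega)$ using $v=\nu\vert_{\pi(v)}$. Your extra treatment of the degenerate case and of the injectivity of $\pi$ on $N'$ only makes explicit what the paper leaves implicit; the remark about $\Sigma\setminus\Sigma'$ is unnecessary, since both sides of the second identity involve only $\Sigma'$ and $N'$.
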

    \begin{proof}
        Let $t \in (0,R)$. By Theorem \ref{normalcycletheorem}, the normal cycle $N$ is a Lipschitz submanifold, which is mapped Lipschitz-continuously by $\Exp_t$ to the level set $\Sigma_t$. Let $v \in N$ be a point such that the tangent space $T_vN$ exists, and let $\xi_1,\dots,\xi_{n-1}$ be a basis for $T_vN$ such that $\tilde\omega_t(\xi_1,\dots,\xi_{n-1}) = 1$. Then by Lemma \ref{perplemma}, 
        \begin{align*}
            \tilde\omega_t(\xi_1,\dots,\xi_{n-1}) & = \omega\left(\Phi_tv,d\Exp_t(\xi_1),\dots,d\Exp_t(\xi_{n-1})\right)\\
            & = \omega\left(\nabla r\vert_{\Exp_tv},d\Exp_t(\xi_1),\dots,d\Exp_t(\xi_{n-1})\right)\\
            & = \left(\Exp_t^*(\iota_{\nabla r}\omega)\right)(\xi_1,\dots,\xi_{n-1}),
        \end{align*}
        where $\iota$ denotes interior product. Since $\nabla r$ is the outer unit normal to $\Sigma_t$, the volume form $\iota_{\nabla r}\omega$ induces the restriction of the Hausdorff measure $\cH^{n-1}$ to the Lipschitz hypersurface $\Sigma_t$. 
        
        \medskip
        For $v \in N'$ we have $v=\nu\vert_{\pi(v)}$, and hence by the defintion of $\tilde \omega_0$ and the fact that $\Exp_0 = \pi$ and $\Phi_0 = \mathrm{Id}$,
        \[
            \tilde\omega_0 = \pi^*(\iota_{\nu}\omega) \qquad \text{on $N'$}.
        \]
        
        In both cases, the desired formulas now follow from the change-of-variables formula for Lipschitz maps, see e.g. \cite[Chapter 3]{EG}.
    \end{proof}

    \begin{lemma}\label{tildeomegatlemma}
        We have that
        \[
            \sup\left\{\int_Nf\,(\tilde\omega_t - \tilde\omega_0) \quad \middle| \quad f : N \to \RR \text{ Borel }, \, \, |f| \le 1\right\} = O(t),
        \]
        where the implied constant depends only on the set $K$.
    \end{lemma}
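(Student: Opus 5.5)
We need to show that $\int_N f\,(\tilde\omega_t-\tilde\omega_0)$ is bounded by $Ct$, uniformly over Borel functions $f$ with $|f|\le 1$. The plan is to reduce this to a pointwise estimate on the density of $\tilde\omega_t-\tilde\omega_0$ with respect to a fixed reference measure on $N$. Because $N$ is a compact oriented Lipschitz submanifold of $SM$ (Theorem \ref{normalcycletheorem}), at $\cH^{n-1}$-almost every $v\in N$ the approximate tangent space $T_vN$ exists. For such $v$ we choose a basis $\xi_1,\dots,\xi_{n-1}$ of $T_vN$ that is orthonormal for the Sasaki metric on $SM$. Then
\[
\int_N f\,(\tilde\omega_t-\tilde\omega_0)=\int_N f(v)\,\bigl(\tilde\omega_t-\tilde\omega_0\bigr)_v(\xi_1,\dots,\xi_{n-1})\,d\cH^{n-1}_{SM}(v).
\]
Consequently the supremum is at most $\cH^{n-1}_{SM}(N)\cdot\sup_v|(\tilde\omega_t-\tilde\omega_0)_v(\xi_1,\dots,\xi_{n-1})|$. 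The mass $\cH^{n-1}_{SM}(N)$ is finite since $N$ is compact and Lipschitz (one could also use the Lipschitz parametrization, for example via $\Exp_{R/2}$ from $\Sigma_{R/2}$). It therefore suffices to bound the density difference by $Ct$ uniformly.

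For the pointwise bound, note that $\tilde\omega_t$ is the pullback under a smooth map of a smooth form. Define the smooth $(n-1)$-form $\alpha_t$ on $\cV_{4R}\supseteq N$ by
\[
(\alpha_t)_v(\xi_1,\dots,\xi_{n-1}) = \omega\bigl(\Phi_tv,\, d\Exp_t\xi_1,\dots,d\Exp_t\xi_{n-1}\bigr).
\]
The map $(t,v)\mapsto\Phi_tv$ is smooth on a neighbourhood of $[0,R]\times N$. So $(t,v)\mapsto \Exp_t(v)=\pi(\Phi_tv)$ and its differential are smooth in $(t,v)$, and hence the coefficients of $\alpha_t$ are smooth in $(t,v)$ on the compact set $[0,R]\times \overline{\cV'}$, where $\cV'$ is a relatively compact neighbourhood of $N$ in $SM$. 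By the mean value theorem in $t$,
\[
\bigl|(\alpha_t-\alpha_0)_v(\xi_1,\dots,\xi_{n-1})\bigr|\le t\,\sup_{s\in[0,R],\,w\in\overline{\cV'}}\bigl\|\partial_s\alpha_s\vert_w\bigr\|\,\prod|\xi_i|\le Ct
\]
for Sasaki-orthonormal $\xi_i$. The constant $C$ depends only on a compact neighbourhood of $N$, and hence only on $K$ and the fixed choice of $R$.

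Combining the two steps gives the $O(t)$ bound, uniformly in $f$. The one point that needs care is measure-theoretic: we must justify that integrating the form $\tilde\omega_t$ over the Lipschitz submanifold $N$ equals integrating its evaluation on an oriented orthonormal frame of the approximate tangent space against $\cH^{n-1}_{SM}$. This is the area formula for Lipschitz submanifolds (cf.\ \cite[Chapter 3]{EG}), the same tool already used in Lemma \ref{COVlemma}. I do not expect a real obstacle: everything reduces to smoothness of the geodesic flow on a compact set. There is no need to know whether $\tilde\omega_t$ is nondegenerate on $N$, since only the difference is estimated.
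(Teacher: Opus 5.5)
Your proposal is correct and follows the same route as the paper: $N\subseteq\cV_{4R}$ and the smoothness of the geodesic flow make $\tilde\omega_t$ a smooth form near the compact set $N$, depending smoothly on $t$, and this gives the uniform $O(t)$ bound. The only difference is that you spell out the area-formula representation and the finite $\cH^{n-1}$-mass of $N$, which the paper leaves implicit in the phrase ``by compactness of $N$.''
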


    \begin{proof}
        By our assumptions, $N \subseteq \cV_{4R}\cap \cV_{-4R}$, and therefore, by smoothness of the geodesic flow $\Phi_t$, the $(n-1)$-form $\tilde\omega_t$ is well-defined and smooth in a neighborhood of $N$ for all sufficiently small $t$, and depends smoothly on the parameter $t$. The lemma then follows from compactness of $N$.
    \end{proof}

    \begin{proof}[Proof of Proposition \ref{tubeprop}]
        By Fubini's theorem and Lemma \ref{COVlemma},
        \begin{align*}
            \int_{\Sigma'}\left(\int_0^tf(\gamma_x(s))ds\right)d\cH^{n-1}(x) & = \int_0^t\left(\int_{\Sigma'}f(\gamma_x(s))d\cH^{n-1}(x)\right)ds\\
            & = \int_0^t\left(\int_{N'}f\left(\gamma_{\pi(\cdot)}(s)\right)\,\tilde\omega_0\right)ds\\
            & = \int_0^t\left(\int_{N'}(f\circ\Exp_s)\,\tilde\omega_0\right)ds,
        \end{align*}
        where the last passage holds true since $\gamma_{\pi(v)}$ is a unit-speed geodesic with initial velocity $v$. Moreover, $\tilde\omega_0=0$ almost everywhere on $N\setminus N'$; indeed, if $N$ has a tangent space at some $v \in N \setminus N'$ then $d\pi$ has a nontrivial kernel at $v$. Hence 
        \begin{equation}\label{intSigmaprime1}
            \int_{\Sigma'}\left(\int_0^tf(\gamma_x(s))ds\right)\,d\cH^{n-1}(x) = \int_0^t\left(\int_N(f\circ\Exp_s)\,\tilde\omega_0\right)ds.
        \end{equation}

        On the other hand, since $r$ is $C^{1,1}$ on ${U} \setminus K$ and $|\nabla r|\equiv 1$, by the coarea formula and Lemma \ref{COVlemma},
        \begin{align*}
            \int_{U_t}fd\cH^n - \int_Kfd\cH^{n} & = \int_0^t\left(\int_{\Sigma_s}fd\cH^{n-1}\right)ds\\
            & = \int_0^t\left(\int_N(f\circ \Exp_s)\,\tilde\omega_s\right)ds.\\
        \end{align*}
        Combining this with \eqref{intSigmaprime1} we see that
        \begin{align*}
            & \left|\int_{U_t}fd\cH^n - \int_Kfd\cH^{n} - \int_{\Sigma'}\left(\int_0^tf(\gamma_x(s))ds\right)\,d\cH^{n-1}(x)\right|\\
            &\qquad  \le \int_0^t \left|\int_N(f\circ\Exp_s)\,\tilde\omega_s - \int_N(f \circ \Exp_s)\,\tilde\omega_0\right|ds\\
            &\qquad  = O(t^2),
        \end{align*}
        where the last step follows from Lemma \ref{tildeomegatlemma} and the fact that $|f| \le 1$.
    \end{proof}

    \section{Proof of Theorem \ref{mainthm}}\label{proofsec}

    Recall that for every $\delta > 0$,
    \[
        K^\delta : = \left\{x \in M\setminus K : \Vol(\CC(x,K)) \le \delta\right\} \cup K,
    \]
    where
    \[
        \CC(x,K) : = \bigcup\left\{\,\,\gamma([0,1])\quad \middle| \quad \begin{array}{c}\gamma\,\,\text{is a minimizing geodesic}, \,\,\gamma(0) = x,\vspace{.1cm}\\   \gamma(1) \in K, \quad \gamma(s) \in M \setminus K \, \,\,\forall\,s \in [0,1)\end{array}\right\},
    \]
    and that our goal is to prove that 
    \[
        \lim_{\delta\searrow 0}\frac{\Vol\left(K^\delta\setminus K\right)}{\delta^{\frac{2}{n+1}}} = \beta_n\cdot \int_{\Sigma}\kappa^{\frac{1}{n+1}}d\cH^{n-1}.
    \]

    Our proof will be carried out in three steps: in the first, we prove estimates for the volume of $\CC(x,K)$ when $\Pi x \in \Sigma''$, i.e. when the closest point to $x$ in $K$ is a point where the Gauss curvature is defined. In the second, following an idea used in \cite{SW,Wer}, we find an integrable majorant for the function $x \mapsto \delta^{-\frac{2}{n+1}}\cH^1(\gamma_x^{-1}(K^\delta\setminus K))$ which involves the radius $\rho(x)$ of the largest ball contained in $K$ which touches the boundary at the point $x$. In the third and final step, we use the tube formula \eqref{tubeq} and apply the dominated convergence theorem.

    \medskip
    Throughout the proof, constants (including those implied by big-O notation) are only allowed to depend on the set $K$, unless stated otherwise. Unnumbered constants ($C, \tilde C, c, c', \ldots$) are used inside the proofs and their value may change from one appearance to another. Numbered constants appear in the statements of lemmas and have a fixed value throughout.

    \medskip
    Recall the notation $U_t : = \{r < t\}$. 

    \begin{lemma}\label{lem:Hausdorff}
        $K^\delta\to K$ in the Hausdorff metric as $\delta\searrow 0$. In fact, there exist $C_0, \delta_0 > 0$ such that
        \[
            K^\delta \subseteq U_{C_0\delta^{1/n}} \qquad \text{for all $0 < \delta < \delta_0$.}
        \]
    \end{lemma}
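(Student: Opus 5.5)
We need a uniform lower bound $\Vol(\CC(x,K)) \ge c\, r(x)^n$ for $x$ outside $K$ at small distance, together with the statement that points far from $K$ have $\CC(x,K)$ of volume bounded below by a positive constant. Given both, $x \in K^\delta\setminus K$ with $\delta<\delta_0$ forces $r(x) \le (\delta/c)^{1/n} =: C_0\delta^{1/n}$, and Hausdorff convergence follows because $K\subseteq K^\delta \subseteq U_{C_0\delta^{1/n}}$.

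\emph{Far points.} Suppose $r(x)\ge t_0$ for a small fixed $t_0\le R$. Let $\gamma$ be a minimizing geodesic from $x$ to $\Pi$-nearest point $y\in K$, so $y\in\Sigma$ and the final piece of $\gamma$ of length $t_0$ lies in $U_{t_0}\setminus K$. I will show that $\CC(x,K)$ contains a small ``cone'' near $y$. Fix a point $z\in\mathrm{int}K$ with $d(z,y)$ small and a ball $B_\eta(z)\subseteq \mathrm{int} K$ (uniform in $y$ by compactness and full-dimensionality, using that the tangent cone is open, Lemma~\ref{tangentconelemma}). For each $w\in B_\eta(z)$ take a minimizing geodesic from $x$ to $w$; its first hitting point of $K$ gives a segment in $\CC(x,K)$ (a sub-segment of a minimizing geodesic is minimizing, and it is the first entry, so the relative interior avoids $K$). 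The union of these segments contains the set of points swept near $K$, and I will lower-bound its volume via Bishop--Gromov: in polar coordinates about $x$, the set of directions reaching $B_\eta(z)$ has solid angle bounded below, and the Jacobian along minimizing geodesics up to the cut locus is controlled, using $\Ric\ge k$ (this is where the Ricci bound enters; it gives comparison of the volume of the ``cone'' region $\{\exp_x(s u): u\in A,\ s\in[a,b]\}$ with the model). Actually, cleaner: use the relative volume comparison for star-shaped sets about $x$: the segment-union $S$ is star-shaped from $x$, and $\Vol(S\cap (B_{\rho_2}(x)\setminus B_{\rho_1}(x)))$ is comparable to $\Vol(S\cap B_{\rho_1}(x))$-type ratios via Bishop--Gromov for the annular cone over $B_\eta(z)$. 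The main obstacle is exactly making this uniform when $d(x,K)$ is large (possibly unbounded, $M$ noncompact); I would handle it by noting that $\CC(x,K)\supseteq \CC(x',K)$-like pieces: the points of $\CC(x,K)$ within distance $t_0$ of $K$ form a star-shaped-from-$x$ set whose volume, by Bishop--Gromov comparison of the annulus $d(x,\cdot)\in[d(x,K),d(x,K)+\eta]$ portion of the cone over $B_\eta(z)$ against the whole cone, is bounded below by a constant depending on $k$, $\eta$, and an upper bound for the diameter ratio; if needed, first observe $K$ is compact so directions subtended shrink like $\mathrm{diam}$, and Bishop--Gromov's ratio $\mathrm{sn}_k$ handles growth.

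\emph{Near points.} For $x$ with $r(x)=t<t_0$, let $y=\Pi x$. By property (iii), work in normal coordinates at $y$ on $B_{4R}(y)$, where the metric is uniformly bi-Lipschitz to Euclidean. Since $K$ has an interior point near $y$ at uniform depth (a uniform interior cone at each boundary point, by compactness and Lemma~\ref{lem:convexchart}), the geodesics from $x$ to points of $K$ in a cone of aperture bounded below, at distance comparable to $t$ from $y$, produce first-entry segments whose union contains a set of Euclidean volume $\ge c\,t^n$ (a truncated cone of height $\sim t$ and base radius $\sim t$ between $x$ and $\Sigma$). Bi-Lipschitz comparability of the metric then gives $\Vol(\CC(x,K))\ge c\,t^n$. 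Combining both cases with $\delta_0 < c\,t_0^n$ yields the lemma.
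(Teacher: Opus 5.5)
The gap is in the far-point case. This is where the lemma has real content on a noncompact $M$, and it is the only place your argument needs the Ricci bound.

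Your first claim, that the directions at $x$ reaching $B_\eta(z)$ have solid angle bounded below, is false. This solid angle tends to $0$ as $d(x,K)\to\infty$: like $d(x,K)^{1-n}$ in $\RR^n$, and exponentially fast in hyperbolic space.

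Your fallback never states an inequality, and the sets it names are the wrong ones. The portion of the cone over $B_\eta(z)$ in the annulus $d(x,K)\le d(x,\cdot)\le d(x,K)+\eta$ may lie largely in $\mathrm{int}\,K$, hence outside $\CC(x,K)$. It also need not contain a definite part of $B_\eta(z)$, since $d(x,z)\ge d(x,K)+\eta$. The assertion ``bounded below by a constant depending on $k$, $\eta$, and an upper bound for the diameter ratio'' is not backed by any estimate. A relative volume comparison gives an absolute lower bound only if you compare a region known to lie in $\CC(x,K)$ with a region of known volume, and your write-up never sets this up.

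The missing step is a ray-by-ray comparison between the part of each minimizing ray from $x$ before it enters $K$ and the part crossing $K$. Work in polar coordinates at $x$ on the segment domain $\Omega_x$, and let $t_-(u)\le t_+(u)$ be the first and last parameters at which the minimizing ray in direction $u$ meets $K$. Then:
the arcs $s\in[0,t_-(u)]$ lie in $\CC(x,K)$;
the arcs $s\in[t_-(u),t_+(u)]$ cover $K$ up to a null set;
and $r_0\le t_-(u)\le t_+(u)\le r_0+D$, where $r_0=d(x,K)$ and $D=\operatorname{diam}K$.
The Ricci bound gives monotonicity of $t\mapsto\int_0^t\JJ_x(s,u)\,ds/\VV(t;C)$. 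Hence $\int_0^{t_+}\JJ_x\le\frac{\VV(t_+;C)}{\VV(t_-;C)}\int_0^{t_-}\JJ_x$, and integrating over $u$ gives
\[
\Vol(K)\le\left(\frac{\VV(r_0+D;C)}{\VV(r_0;C)}-1\right)\Vol(\CC(x,K)).
\]
Your intuition that growth of the Jacobian compensates for the shrinking solid angle is right. It becomes a proof only in this form, where the key uniformity is that the crossing length $t_+-t_-$ is at most $D$ while $t_-\ge r_0$. The same argument works with $B_\eta(z)$ in place of $K$.

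This single inequality handles both of your regimes at once. The factor is bounded for $r_0\ge 1$, so small $\delta$ forces $r_0\le 1$; and it is $O(r_0^{-n})$ for $r_0\le 1$, which gives $r_0\le C_0\delta^{1/n}$.

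Your near-point interior-cone construction is a legitimate alternative, but it is unnecessary, and it needs more than you supply. You would have to transport a uniform interior cone from the charts of Lemma~\ref{lem:convexchart} to normal coordinates at $y$. You would also need to control the $O(t^2)$ bending of geodesics, so that the swept region really precedes first entry into $K$.
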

    \begin{proof}
        Let $x \in K^\delta \setminus K$ and write
        \[
            r_0 : = r(x) = d(x,K).
        \]
        
        Since $M$ is geodesically-convex, there exists an open, star-shaped set $\Omega_x \subseteq T_xM$ such that the map $\exp_x\vert_{\Omega_x}$ is injective and its image $\exp_x(\Omega_x)$ has full measure in $M$, and for every $v \in \Omega_x$, the geodesic $t\mapsto\exp_x(tv)$ is minimizing on $[0,1]$ (see e.g. \cite[Proposition 5.19 and Theorem 10.34]{Lee}; this fact is usually stated for complete manifolds, but remains valid for geodesically-convex ones).

        \medskip
        Write
        \[
            \Lambda_x : = \left\{u \in S_xM\,\,\colon \,\, \exists t > 0 \quad tu \in \Omega_x \,\,\text{and} \, \,\exp_x(tu) \in K \right\}.
        \]
        
        For $u \in \Lambda_x$, denote by $t_-$ (resp. $t_+$) the infimum (resp. supremum) of all $t > 0 $ for which $tu \in \Omega_x$ and $\exp_x(tu) \in K$. 
        Observe that
        \begin{equation}\label{eq:tminustplus}
            t_- \ge r_0 \qquad \text{ and } \qquad t_+ \le r_0 + D,
        \end{equation}
        where $D$ denotes the diameter of the set $K$. Note also that
        \begin{equation}\label{eq:CCxKpolar}
            \exp_x\left(\left\{ru \,\,\colon\,\,u \in \Lambda_x,\,\,r \in \left[0,t_-(u)\right]\right\}\right) \subseteq \CC(x,K).
        \end{equation}
        
        Denote by $\JJ_x$ the Jacobian of the map 
        \[
            (r,u) \mapsto \exp_x(ru), \qquad (r,u) \in (0,\infty)\times S_xM\,\,\colon\,\,ru \in \Omega_x.
        \]

        Since the Ricci curvature of $M$ is bounded from below, Riccati comparison implies that there exists $C > 0$ such that
        \begin{equation}\label{eq:BG}
            \frac{\int_0^{t'}\,\JJ_x(r,u)\,dr}{\VV(t';C)} \le \frac{\int_0^{t}\,\JJ_x(r,u)\,dr}{\VV(t;C)} \qquad \text{for all $u \in S_xM$ and all $0 < t \le t' \le t_+(u)$,}
        \end{equation}
        where
        \[
            \VV(t;C) : = \int_0^t\sinh(Cr)^{n-1}\,dr, \qquad t \ge 0
        \]
        (see \cite[Theorem III.4.3]{Chavel} or \cite[Theorem 11.19]{Lee}, integration of \eqref{eq:BG} gives the Bishop-Gromov inequality).
        
        \medskip
       Since $\exp_x(\Omega_x)$ has full measure in $M$,
        \[
            \Vol(K) \le \int_{\Lambda_x}\int_{t_-(u)}^{t_+(u)}\,\JJ_x(r,u)\,dr\,d\sigma_x(u),
        \]
        where $\sigma_x$ is the measure on $S_xM$ induced by the metric $g$. Hence, by \eqref{eq:BG}, \eqref{eq:tminustplus} and \eqref{eq:CCxKpolar},
        \begin{align*}
            \Vol(K) 
            & \le
            \int_{\Lambda_x}\int_0^{t_+(u)}\,\JJ_x(r,u)\,dr\,d\sigma_x(u)
            -
            \int_{\Lambda_x}\int_0^{t_-(u)}\,\JJ_x(r,u)\,dr\,d\sigma_x(u)
            \\
            & \le 
            \int_{\Lambda_x}\frac{\VV(t_+(u);C)}{\VV(t_-(u);C)}\int_{0}^{t_-(u)}\,\JJ_x(r,u)\,dr\,d\sigma_x(u)
            -
            \int_{\Lambda_x}\int_0^{t_-(u)}\,\JJ_x(r,u)\,dr\,d\sigma_x(u)
            \\
            & \le 
            \left(\frac{\VV(r_0 + D;C)}{\VV(r_0;C)}-1\right)\int_{\Lambda_x}\int_0^{t_-(u)}\,\JJ_x(r,u)\,dr\,d\sigma_x(u)
            \\
            & \le
            \left(\frac{\VV(r_0 + D;C)}{\VV(r_0;C)}-1\right)\Vol(\CC(x,K))
            \\
            & \le 
            \left(\frac{\VV(r_0 + D;C)}{\VV(r_0;C)}-1\right)\cdot\delta,
        \end{align*}
        where the last inequality holds since $x \in K^\delta\setminus K$. Thus
        \begin{equation}\label{eq:ratiobound}
            1 + \frac{\Vol(K)}{\delta} \le \frac{\VV(r_0+D;C)}{\VV(r_0;C)} .
        \end{equation}
        
        The function $\VV(t+D;C)/\VV(t;C)$ tends to a finite limit as $t \to \infty$ and is continuous on $(0,\infty)$, and therefore its supremum on the interval $[1,\infty)$ is finite. Hence, by \eqref{eq:ratiobound}, there exists $\delta_0 = \delta_0(C,D,\Vol(K))$ such that if $\delta < \delta_0$ then $r_0 \le 1$. The inequality $\sinh(Cr) \ge Cr$ implies that $\VV(r_0;C) \ge \tfrac{C^{n-1}}{n}\,r_0^{\,n}$, while $\VV(r_0 + D;C) \le \VV(1 + D;C)$ for $r_0 \le 1$, so by \eqref{eq:ratiobound} and monotonicity of $\VV$,
        \[
            \frac{\Vol(K)}{\delta} \le \frac{n\,\VV(1 + D;C)}{C^{n-1}} \cdot r_0^{-n}.
        \]

        Thus $r_0 \le C_0\,\delta^{1/n}$ for all $\delta < \delta_0$, with $C_0 := \big(n\,\VV(1 + D;C)/(C^{n-1}\Vol(K))\big)^{1/n}$. Since $x$ is an arbitrary point in $K^\delta\setminus K$, it follows that $K^\delta \subseteq U_{C_0\delta^{1/n}}$. Since $K \subseteq K^\delta$, this proves that $K^\delta \to K$ in the Hausdorff metric as $\delta \searrow 0$.
    \end{proof}

    \subsection{Step I: pointwise convergence}

    Lemma~\ref{lem:Hausdorff} enables us to consider in the first two steps only points which lie in the neighborhood $U := U_R$ fixed in Section~\ref{basicsec}.

    \medskip
    Recall that $\Sigma''\subseteq\Sigma$ denotes the set of points $x_0 \in \Sigma = \partial K$ such that that the signed distance function $r$ to $\Sigma$ is twice differentiable at $x_0$ in the sense of Alexandrov. We write
    \[
        x_t : = \gamma_{x_0}(t) \qquad \text{and} \qquad \dot x_t : = \dot\gamma_{x_0}(t) \in T_{x_t}M, \qquad -4R < t < 4R,
    \]
    where $\gamma_{x_0}$ is the perpendicular to $K$ at $x_0$ defined in \eqref{eq:perpendicular}. In particular, $\dot x_0 = \nu\vert_{x_0}$ is the unit normal to $\Sigma$ at $x_0$.
    Our goal in the present section is to prove the following estimate for $\Vol(\CC(x_t,K))$:

    \begin{proposition}\label{volumeprop}
        For every $x_0 \in \Sigma''$,
        \[
        \lim_{t\searrow 0 } \frac{\Vol(\CC(x_t,K))^{\frac{2}{n+1}}}{t} =
        \begin{cases}
            \beta_n^{-1}\cdot\kappa(x_0)^{-\frac{1}{n+1}} & \kappa(x_0) > 0,\\
            \infty & \kappa(x_0) = 0.
        \end{cases}
        \]
    \end{proposition}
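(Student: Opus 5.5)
Work in normal coordinates centred at $x_0$, identifying a neighbourhood of $x_0$ with a ball in $T_{x_0}M\cong\RR^n$ via $\exp_{x_0}$. Choose an orthonormal basis $e_1,\dots,e_{n-1},e_n=\nu\vert_{x_0}$ that diagonalizes $\sff\vert_{x_0}$. By \eqref{alexandroveq}, near $x_0$ the set $K$ is described in these coordinates by
\[
\{r\le 0\}=\Bigl\{w : w_n + \tfrac12\sum_{i<n}\kappa_i w_i^2 + o(|w|^2)\le 0\Bigr\}.
\]
The point $x_t$ corresponds to $te_n$. In these coordinates the metric is $\delta_{ij}+O(|w|^2)$, so the Riemannian volume equals Lebesgue volume up to a factor $1+O(\mathrm{diam}^2)$. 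Geodesics from $x_t$ are radial lines only from $x_t$ itself, not from $x_0$. However, geodesics issuing from $x_t$ deviate from straight lines in the $x_0$-chart by $O(\ell\cdot \mathrm{dist}^2)$ over length $\ell$. We will show that everything relevant happens at scale $|w'|\lesssim \sqrt t$, $|w_n|\lesssim t$. On this region the deviation is $O(t^{3/2})$ in the normal direction, which is $o(t)$ and therefore negligible.

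\textbf{Comparison with a cone over a paraboloid.} The key step is to sandwich $\CC(x_t,K)$ between Euclidean cones over perturbed paraboloids. Fix $\eps>0$. By \eqref{alexandroveq}, the domain $K$ contains the paraboloid region $P_+=\{w_n\le -\frac12\sum(\kappa_i+\eps)w_i^2\}$ and is contained in $P_-=\{w_n\le -\frac12\sum(\kappa_i-\eps)_+w_i^2 \}$, intersected with a small ball of radius $\eta(\eps)$. The exterior part $\CC(x_t,K)$ then lies between the cone from $te_n$ over $P_+$ (minus $P_+$) and the cone over $P_-$ (minus $P_-$), up to the geodesic-bending errors above. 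Two further points need care here.

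First, we must show that $\CC(x_t,K)$ lies inside the small ball $B_{\eta}(x_0)$ for small $t$. Since $\CC(x_t,K)$ is the union of segments from $x_t$ to points of $\Sigma$ visible from $x_t$, it suffices to show that the visible part of $\Sigma$ shrinks to $x_0$. This follows because a tangent segment from $x_t$ to $K$ must hit $\Sigma$ where $r$ has a first-order tangency. That localizes to distance $O(\sqrt{t/\kappa_{\min}})$ when $\kappa>0$, and in general to $o(1)$ by convexity and the fact that $\nu\vert_{x_0}$ is the unique normal.

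Second, the bending perturbs the cones only by a relative factor $1+O(\sqrt t)$. We absorb this by replacing $t$ with $t(1\pm\eps)$.

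\textbf{Euclidean computation.} For the Euclidean model, with apex $te_n$ and region $\{w_n\le -\frac12\sum a_iw_i^2\}$, an affine change $w_i\mapsto\sqrt{a_i}w_i$ reduces the problem to the rotationally symmetric paraboloid $w_n\le -\frac12|w'|^2$. There the volume of the cap region between the cone and the paraboloid is $c_n t^{\frac{n+1}{2}}$, obtained by explicit integration: the tangency circle sits at $|w'|=\sqrt{2t}$, $w_n=-t$. The general region therefore has volume $c_n t^{(n+1)/2}\prod a_i^{-1/2}$. Checking the constant gives $\Vol^{2/(n+1)}/t\to\beta_n^{-1}\kappa^{-1/(n+1)}$; I expect $c_n=\frac{2^{(n+1)/2}\vartheta_{n-1}}{n(n+1)}$. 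Letting $\eps\to0$ yields the limit when $\kappa(x_0)>0$. When $\kappa(x_0)=0$, some $\kappa_i=0$, so the inner comparison uses $a_i=\eps$. Since $\prod a_i^{-1/2}\to\infty$ as $\eps\to0$, the liminf is $\infty$.

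\textbf{Main obstacle.} The main obstacle is the localization combined with the degenerate directions. Because the expansion \eqref{alexandroveq} is only second-order at a single point, when some $\kappa_i$ is small the visible region is not confined to an $O(\sqrt t)$-ball. We must then rely on the inner comparison with the ball of radius $\eta(\eps)$ alone, and check that the truncated cone over $P_+\cap B_\eta$ still has volume $\gtrsim t^{(n+1)/2}\eps^{-1/2}$. This requires the tangency scale $\sqrt{t/\eps}$ to be smaller than $\eta$, which holds for $t$ small. I would handle this by proving only the lower bound on volume in the degenerate case, via the inclusion of the cone over $P_+\cap B_\eta$ into $\CC(x_t,K)\cup K$. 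This inclusion in turn comes from the convexity condition \eqref{eq:Kcondition}: minimizing geodesics from $x_t$ to $K$ leave $K$ once they exit it.
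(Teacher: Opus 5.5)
Your overall plan is the paper's: localize, sandwich $\CC(x_t,K)$ between cones over perturbed paraboloids, and compute the Euclidean model. Your constant $c_n=2^{(n+1)/2}\vartheta_{n-1}/(n(n+1))$ is correct. Working in normal coordinates at $x_0$ with an $O(t^{3/2})$ bending estimate, instead of the paper's coordinates at $x_t$, is a legitimate variant.

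However, the localization step, on which the whole upper bound rests, is not proved, and the reason you give is false. It is not true that the visible part of $\Sigma$ shrinks to $x_0$ ``in general by convexity and uniqueness of the normal''. If $x_0$ is in the relative interior of a facet of a polytope, then $x_0\in\Sigma''$, yet the whole facet is visible from $x_t$ for every $t>0$. Your tangency heuristic is purely local. The expansion \eqref{alexandroveq} holds only at $x_0$ and only in a small ball, and your bending bound is $o(t)$ only for geodesics of length $O(\sqrt t)$. So nothing in the proposal excludes minimizing geodesics from $x_t$ that first meet $K$ far from $x_0$.

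The missing idea is a global compactness argument using both $(\star)$ and $\kappa(x_0)>0$:
\begin{enumerate}
\item Suppose such geodesics from $x_{t_j}$ end at $z_j$ with $d(z_j,x_0)\ge\eta$. A subsequence converges to a minimizing geodesic $\gamma$ from $x_0$ to some $z\neq x_0$.
\item By $(\star)$, $\gamma\subseteq K$. Since the approximating geodesics avoid $K$ before their endpoints, in fact $\gamma\subseteq\Sigma$.
\item Then $r\equiv 0$ along $\gamma$, and \eqref{alexandroveq} forces $\sff(\dot\gamma(0))=0$, contradicting $\kappa(x_0)>0$.
\end{enumerate}
Only after this can the local argument at scale $\sqrt t$ (as in Lemma~\ref{smallballlemma}) give $\CC(x_t,K)\subseteq B_{C\sqrt t}(x_t)$.

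Your sandwich is also misstated, and this matters in the degenerate case. The cone over $P_+$ minus $P_+$ is not contained in $\CC(x_t,K)$, because it can contain points of $K\setminus P_+$. Only the part outside the \emph{outer} region lies in $\CC(x_t,K)$, and the outer inclusion must likewise allow for the shell between the two regions; compare Lemma~\ref{sandwichlemma}. For $\kappa(x_0)>0$ the discrepancy is a shell of volume $O(\eps t^{(n+1)/2})$, so it is harmless.

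When $\kappa(x_0)=0$, however, $K$ can fill a fixed fraction of the cone over $P_+$, for instance if $K$ is locally a cylinder in the flat direction. Then $c_n t^{(n+1)/2}\prod_i a_i^{-1/2}$ is not a lower bound for $\Vol(\CC(x_t,K))$. Note also that \eqref{alexandroveq} only gives $K\cap B_\eta\subseteq\{w_n\le-\frac12\sum_i(\kappa_i-\eps)w_i^2\}$, not the version with $(\kappa_i-\eps)_+$. Your inclusion of the cone into $\CC(x_t,K)\cup K$ via $(\star)$ is correct, but you must still remove $K$.

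The fix is to use the supporting half-space. In normal coordinates at $x_0$, $(\star)$ and differentiability of $r$ at $x_0$ give $K\cap B_\eta\subseteq\{w_n\le 0\}$. So the part of the cone above $\{w_n=0\}$ is disjoint from $K$. Since the tangency set sits at height about $-t$, this part is a cone of height about $t$, and its volume is still $\gtrsim t^{(n+1)/2}\eps^{-1/2}$. This is exactly what the paper does with the cylinder $Q_t$ and Lemma~\ref{halfspacelemma}.
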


    For the rest of the section, we fix a point
    \[
        x_0 \in \Sigma''.
    \]
    
    Let
    \[
        \kappa_i : = \kappa_i(x_0),\qquad i = 1,\dots,n-1
    \]
    denote the principal curvatures of $\Sigma$ at $x_0$, and let $w_1,\dots,w_n$ be an orthonormal basis of $T_{x_0}M$ such that 
    \[
        w_n = \nu\vert_{x_0}
        \qquad 
        \text{and}
        \qquad 
        \Hess r\vert_{x_0} = \sum_{i=1}^{n-1}\kappa_i\left<\,\cdot\,,w_i\right>^2.
    \]
    Extend the basis $\{w_i\}$ to a parallel orthonormal frame $W_i = W_i(t)$ along the perpendicular $\gamma_{x_0}$. Since $\gamma_{x_0}$ is a geodesic,
    \begin{equation}\label{Wneq}
        W_n(t) = \dot\gamma_{x_0}(t) = \dot x_t \qquad -R < t < R.
    \end{equation}
    The choice of frame $W_i$ gives rise to a family of linear isometries:
    \[
        L_t:\RR^n \to T_{x_t}M, \qquad L_tv = \sum_{i=1}^nv^i\cdot W_i(t),\quad v = (v^1,\dots,v^n)\in\RR^n,
    \]
    and a family of diffeomorphisms:
    \[
        G_t:B' \to B_{4R}(x_t)\subseteq M, \qquad G_t = \exp_{x_t}\circ\, L_t,
    \]
    depending smoothly on $t$, where 
    \[
        B':=B_{4R}(0)\subseteq \RR^n
    \]
    is the ball of radius $4R$ centered at the origin in $\RR^n$. Set also
    \[
        B = B_{2R}(0) \subseteq B'.
    \]
    By our choice of $R$, the ball $B_{4R}(x_t) = G_t(B')$ is strongly convex for all $t\in(-R,R)$. The following lemma follows from Lemma \ref{normallemma}.
    \begin{lemma}\label{Gtlemma}
        For every $0<\ell<R$ and every measurable set $S \subseteq B_\ell(0)\subseteq B'$,
        \[
            \Vol(G_t(S)) = \Vol_{\RR^n}(S) + O(\ell^{n+2}),
        \]
        where $\Vol_{\RR^n}$ is the Lebesgue measure on $\RR^n$.
    \end{lemma}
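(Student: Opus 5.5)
We compare the Riemannian volume with Lebesgue measure in normal coordinates. By the change-of-variables formula,
\[
    \Vol(G_t(S)) = \int_S \left|\det dG_t\right|(v)\,dv = \int_S \theta_{x_t}(L_tv)\,dv,
\]
where $\theta_{x_t}$ is the density of the pullback of $\omega$ under $\exp_{x_t}$ with respect to Lebesgue measure on $T_{x_t}M$; the latter is identified with $\RR^n$ isometrically by $L_t$. The first step is to quote the standard normal-coordinate expansion. We expect this to be the content of Lemma~\ref{normallemma} in the Appendix:
\[
    \theta_{x}(w) = 1 - \tfrac16\Ric(w,w) + O(|w|^3),
\]
and in particular $|\theta_x(w) - 1| \le C|w|^2$ for $|w| < 4R$. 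What matters is that the first-order term vanishes, because $d\exp_x$ at the origin is the identity and the Jacobi fields satisfy $J(s) = sv + O(s^3)$.

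The crucial point is uniformity: the constant $C$ must be independent of $x$ and of $t \in (-R,R)$. We will obtain it from compactness. The points $x_t$ range over the compact set $\overline{U_R}$; alternatively, for fixed $x_0$ they lie on the compact segment $\gamma_{x_0}([-R,R])$, and then over all $x_0 \in \Sigma$. By property (iii), $\exp_x$ is a diffeomorphism on $B_{4R}(0)$ for every such $x$. The map $(x,w)\mapsto\theta_x(w)$ is smooth on the compact set $\{(x,w) : x \in \overline{U_R},\ |w|\le 4R-\eps\}$, where we shrink slightly if needed; the lemma is only applied for $\ell<R$. Taylor's theorem with remainder in $w$ at $w=0$ then gives a uniform second-order bound, since $\theta_x(0)=1$ and $d\theta_x\vert_0=0$ for all $x$. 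The constant depends only on $K$ through $R$ and the geometry near $K$.

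With this bound in hand,
\[
    \left|\Vol(G_t(S)) - \Vol_{\RR^n}(S)\right| \le \int_S C|v|^2\,dv \le C\ell^2\,\Vol_{\RR^n}(B_\ell(0)) = C\vartheta_n\ell^{n+2},
\]
which is $O(\ell^{n+2})$, as claimed.

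The main obstacle is justifying the vanishing of the linear term of $\theta_x$ uniformly. If Lemma~\ref{normallemma} is not already stated in this form, we would derive it directly: write $\theta_x(s u) = \det(J_1(s),\dots,J_{n-1}(s))/s^{n-1}$ using Jacobi fields with $J_i(0)=0$ and $J_i'(0)=e_i$. The Jacobi equation gives $J_i(s) = s e_i - \tfrac{s^3}{6}R(e_i,u)u + O(s^4)$, with a remainder controlled by bounds on the curvature and its derivative over the compact region.
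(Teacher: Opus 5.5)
Your proposal is correct and follows the paper's route: the paper simply cites Lemma~\ref{normallemma}, whose estimate $\sqrt{\det(G^*g\vert_v)} = 1 + O(|v|^2)$ is your bound $|\theta_{x_t}(L_tv)-1| \le C|v|^2$, and integrating it over $S \subseteq B_\ell(0)$ gives $\Vol_{\RR^n}(S)\cdot O(\ell^2) = O(\ell^{n+2})$. Your compactness argument for uniformity in $x_t$ and $t$ just makes explicit what the paper leaves implicit in its phrase ``constants depending only on the set $A$''.
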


    For each $t,t' \in (-R,R)$, the map $G_{t'}^{-1}\circ G_t$ restricts to a diffeomorphism from the ball $B$ to an open subset of $B'$. According to the following lemma, this diffeomorphism is translation by the vector $(0,\dots,0,t-t')$, up to lower order terms.

    \begin{lemma}
        For $v \in B$ and $t,t' \in (-R,R)$,
        \begin{equation}\label{translationeq}
            G_{t'}^{-1}(G_t(v)) = v + (t-t') \cdot e_n + O(|t-t'|\cdot|v^\perp|),
        \end{equation}
        where $\,^\perp$ denotes orthogonal projection onto the orthogonal complement of $e_n = (0,\dots,0,1) \in \RR^n$. 
    \end{lemma}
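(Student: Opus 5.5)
Define $F(t',t,v) := G_{t'}^{-1}(G_t(v))$ on $(-R,R)\times(-R,R)\times B$. By property (iii) of $R$ and the smooth dependence of $G_t$ on $t$, $F$ is smooth on a neighborhood of the compact set $[-R+\epsilon', R-\epsilon']^2\times\overline B$. The plan is to compare $F$ with the affine map $A(t',t,v) := v + (t-t')e_n$ and to show that the difference $E := F - A$ vanishes both when $t = t'$ and when $v^\perp = 0$. The bound $E = O(|t-t'|\cdot|v^\perp|)$ then follows from a Hadamard-lemma style argument.

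\emph{Vanishing on the two sets.} If $t = t'$ then $F(t,t,v) = v$, so $E(t,t,v) = 0$. If instead $v^\perp = 0$, write $v = s e_n$. By \eqref{Wneq}, $L_t(se_n) = s\,\dot x_t$, so $G_t(se_n) = \exp_{x_t}(s\dot x_t) = \gamma_{x_0}(t+s)$. This requires $|t+s|<4R$, which holds since $|t|<R$ and $|s|<2R$. On the other hand $G_{t'}((t+s-t')e_n) = \gamma_{x_0}(t+s)$ as well, because $|t+s-t'| < 4R$ lies in $B'$. Injectivity of $G_{t'}$ on $B'$ then gives $F(t',t,se_n) = se_n + (t-t')e_n$, that is, $E(t',t,se_n) = 0$.

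\emph{Factoring the error.} Write $v = (v^\perp, v^n)$ and set $\tau := t - t'$. Since $E(t',t'+\tau,v)$ vanishes at $\tau = 0$, we have
\[
E(t',t'+\tau,v) = \tau\int_0^1 \partial_\tau E(t',t'+\theta\tau,v)\,d\theta .
\]
Set $H(t',\tau,v) := \partial_\tau E(t',t'+\tau,v)$. Because $E$ vanishes identically on $\{v^\perp=0\}$ for all $t,t'$, its derivative in $t$ also vanishes there, so $H(t',\tau,(0,v^n)) = 0$. Applying the fundamental theorem of calculus again, this time in the variable $v^\perp$ along the segment $\lambda\mapsto(\lambda v^\perp, v^n)$ (which stays in $B$ because $B$ is a ball), gives $|H| \le \sup|\partial_{v^\perp}H|\cdot|v^\perp|$. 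Combining the two steps,
\[
|E| \le |\tau|\,|v^\perp|\,\sup\left|\partial_{v^\perp}\partial_t E\right| .
\]

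\emph{Uniformity, the main obstacle.} The supremum must be finite and depend only on $K$. The dependence on $x_0$ is harmless: the frames $W_i$ are parallel along geodesics emanating from a compact set, and $\Sigma$ is compact. The real difficulty is the domain. A point $G_t(v)$ with $v\in B$ lies in $B_{3R}(x_{t'})$, so $F$ is well defined and smooth. However, $t,t'$ range over the open interval $(-R,R)$, so the supremum has to be controlled up to the ends of that interval. Two ways to handle this are available. One is to note that all the maps are defined and smooth for $|t|<4R$, so the derivatives are bounded on the compact closure $|t|,|t'|\le R$. The other, if needed, is to shrink $R$, which the paper explicitly allows.
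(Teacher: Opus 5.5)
Your argument is correct and is essentially the paper's proof: the paper fixes $t'$, considers the curve $\eta_v(t)=\exp_{x_{t'}}^{-1}\exp_{x_t}(L_tv)$, observes that $\dot\eta_v\equiv\dot x_{t'}$ exactly when $v^\perp=0$ (translation along the perpendicular), uses a Lipschitz bound in $v$ to get $\dot\eta_v=\dot x_{t'}+O(|v^\perp|)$, and integrates from $\eta_v(t')=L_{t'}v$, which is precisely your two-step Hadamard argument for $E=F-A$. One small slip: since $|t-t'|$ can approach $2R$, the point $G_t(v)$ is only guaranteed to lie in $B_{4R}(x_{t'})$ (not $B_{3R}(x_{t'})$), so uniform derivative bounds up to the ends of the interval really call for your second option of shrinking $R$, a point the paper also passes over with an appeal to compactness.
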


    \begin{proof}
        Fix $t' \in (-R,R)$. For every $v \in B$, define a curve $\eta_v:(-R,R) \to T_{x_{t'}}M$ by
        \[
            \eta_v(t) := \exp_{x_{t'}}^{-1}\exp_{x_t}(L_tv).
        \]
        By the triangle inequality, $\exp_{x_t}(L_tv) \in B_{4R}(x_{t'})$ for all $t {\,\in\,} (-R, R)$, so the curve $\eta_v$ is well defined.

        \medskip
        Suppose first that $v = s\cdot e_n$ for some $s \in (-2R,2R)$. Then, since $L_te_n = W_n(t) =  \dot x_t$ by \eqref{Wneq},
        \[
            \eta_{s\cdot e_n}(t) = \exp_{x_{t'}}^{-1}\exp_{x_t}(s\cdot\dot x_t) = \exp_{x_{t'}}^{-1}(x_{t + s}) = (t+s - t')\cdot \dot x_{t'},
        \]
        and in particular
        \begin{equation}\label{dotetaseneq}
            \dot\eta_{s\cdot e_n} \equiv \dot x_{t'}.
        \end{equation}
        Now let $v \in B$ be arbitrary. By compactness of the closure of $U$ and the fact that $d(x_t,x_{t'}) < 2R$, the map $(v,t) \mapsto \dot\eta_v(t)$ has Lipschitz seminorm bounded by a constant independent of $x_0$, and therefore by \eqref{dotetaseneq},
        \[
            \dot\eta_v(t) = \dot x_{t'} + O(|v^\perp|), \qquad v \in B, \quad t \in (-R,R).
        \]
        Since $\eta_v(t') = L_{t'}v$, by integrating the above estimate from $t'$ to $t$ we get
        \[
            \eta_v(t) = L_{t'}v + (t-t')\cdot \dot x_{t'} + O\left(|t-t'||v^\perp|\right).
        \]
        Applying the isometry $L_{t'}^{-1}$ to both sides and using the definition of $\eta_v$ and $G_t$ we get the desired result.
    \end{proof}

    Define a quadratic form $q$ on $\RR^n$ by
    \[
        q(v) : = \frac12\cdot\Hess r\vert_{x_0}\,(L_0v) = \frac12\cdot\sum_{i=1}^{n-1}\kappa_i(v^i)^2, \qquad v \in \RR^n.
    \]

    The fact that the signed distance function $r$ is differentiable twice in the sense of Alexandrov at $x_0$ gives us the following expansion of $r\circ G_t$:

    \begin{lemma}\label{taylorlemma}
        For  $v \in B$,
        \[
            r(G_t(v)) = t + v^n + q(v) + O(t|v^\perp|) + o(|v|^2+t^2) \qquad \text{ as } (v,t) \to (0,0).
        \]
    \end{lemma}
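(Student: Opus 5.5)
The plan is to reduce Lemma \ref{taylorlemma} to the Alexandrov expansion \eqref{alexandroveq} at $x_0$ by rewriting $G_t(v)$ as $G_0(w)$ for a suitable $w$, using \eqref{translationeq}. Apply \eqref{translationeq} with $t' = 0$: for $v \in B$ and $|t|$ small,
\[
    G_t(v) = G_0(w), \qquad w := G_0^{-1}(G_t(v)) = v + t\cdot e_n + O(|t|\cdot|v^\perp|).
\]
Since $G_0 = \exp_{x_0}\circ L_0$, this gives $r(G_t(v)) = r(\exp_{x_0}(L_0 w))$. The first thing to check is that $w \to 0$ as $(v,t)\to(0,0)$, which is clear since $|w| \le |v| + |t| + O(|t||v|)$.

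Now apply \eqref{alexandroveq} with the vector $L_0 w$. Because $L_0$ is an isometry with $L_0 e_n = \nu\vert_{x_0}$, we have $\left<L_0w,\nu\vert_{x_0}\right> = w^n$ and $\frac12\Hess r\vert_{x_0}(L_0w) = q(w)$. Hence
\[
    r(G_t(v)) = w^n + q(w) + o(|w|^2).
\]
Then substitute the expansion of $w$ term by term.
\begin{itemize}
\item[(a)] The linear term is $w^n = v^n + t + O(|t||v^\perp|)$, which already has the claimed form.
\item[(b)] The remainder satisfies $|w|^2 \le C(|v|^2+t^2)$, so $o(|w|^2) = o(|v|^2+t^2)$.
\item[(c)] For the quadratic term, $q$ depends only on $w^\perp$. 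By \eqref{translationeq}, $w^\perp = v^\perp + O(|t||v^\perp|)$, because the $t e_n$ shift has no perpendicular component. So $q(w) - q(v) = O(|v^\perp|\cdot |t||v^\perp|) = O(|t||v^\perp|^2)$. Since $|v^\perp|$ is bounded on $B$, this is absorbed into $O(t|v^\perp|)$.
\end{itemize}
Combining (a)--(c) gives exactly the stated expansion.

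The only delicate point is bookkeeping: we must make sure the error in \eqref{translationeq} is controlled by $|t||v^\perp|$ uniformly for small $t$, rather than by $|t|$ alone. An $O(|t|)$ error would not be admissible in the linear term. This uniformity is precisely what the preceding lemma provides, with constants independent of $x_0$ and valid for $v\in B$, $t\in(-R,R)$. I do not expect any further obstacle.
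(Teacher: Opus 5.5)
Your argument is correct and is essentially the paper's: both use \eqref{translationeq} with $t'=0$ to reduce to the Alexandrov expansion \eqref{alexandroveq} at $x_0$. The paper instead compares $G_t(v)$ with $G_0(v+te_n)$ using that $r$ is $1$-Lipschitz, so $q(v+te_n)=q(v)$ holds exactly and your step (c) is not needed; note also that the constant in your step (c) depends on $\max_i\kappa_i(x_0)$, so it is cleaner to absorb $O(|t||v^\perp|^2)$ into $o(|v|^2+t^2)$ than into the $O(t|v^\perp|)$ term, which in the paper is uniform in $x_0$.
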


    \begin{proof}
        Setting $t' =0$ in \eqref{translationeq} and applying $G_0$ to both sides, we see that
        \begin{equation}\label{eq:GtG0}
            d\left(G_t(v),G_0(v + t\cdot e_n)\right) = O(t|v^\perp|).
        \end{equation}
        Since $r$ is 1-Lipschitz, it follows from \eqref{eq:GtG0} and \eqref{alexandroveq} that
        \begin{align*}
            r(G_t(v)) & = r(G_0(v + t\cdot e_n)) + O(t|v^\perp|)\\
            & = r\left(\exp_{x_0}(L_0v + t\cdot\nu\vert_{x_0})\right) + O(t|v^\perp|)\\
            & = \left<L_0v + t\cdot\nu\vert_{x_0},\nu\vert_{x_0}\right> + \frac12\cdot \Hess r(L_0v) + O(t|v^\perp|) + o(|v|^2 + t^2)\\
            & = v^n + t + q(v) + O(t|v^\perp|) + o(|v|^2 + t^2),
        \end{align*}
        as desired.
    \end{proof}

    \begin{lemma}\label{halfspacelemma}
        For every $\eps >0$ there exist $\ell_0,\lambda_0>0$ such that
        \begin{equation}\label{halfplaneeq}
            G_t^{-1}(K)\cap B_\ell(0) \subseteq \left\{v \in B_\ell(0) \subseteq \RR^n \,\, \big\vert \,\, v^n + t \le  \eps\cdot\ell^2\right\}\qquad 
            \begin{array}{c}
                \text{for every $0 < \ell < \ell_0$}\\
                \text{and every $0 < t < \lambda_0\cdot \ell$}.
            \end{array}
        \end{equation}
        See Figure \ref{halfspacefig}.
    \end{lemma}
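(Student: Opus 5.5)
The plan is to combine the Taylor expansion of Lemma \ref{taylorlemma} with the fact that $K=\{r\le 0\}$, so that $G_t^{-1}(K)=\{v : r(G_t(v))\le 0\}$. Fix $\eps>0$. Take $v\in G_t^{-1}(K)\cap B_\ell(0)$ with $\ell$ small and $0<t<\lambda_0\ell$. By Lemma \ref{taylorlemma},
\[
0\ \ge\ r(G_t(v)) = t + v^n + q(v) + O(t|v^\perp|) + o(|v|^2+t^2).
\]
Since $q\ge 0$ (the principal curvatures are nonnegative), we may drop $q(v)$ and obtain
\[
v^n + t \le C\,t\,|v^\perp| + o(|v|^2+t^2).
\]
The goal is to make the right-hand side at most $\eps\ell^2$.

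The two error terms are handled separately. Because $|v|\le\ell$ and $t<\lambda_0\ell$, we have $|v|^2+t^2\le(1+\lambda_0^2)\ell^2$. Choosing $\ell_0$ small enough (depending on $\eps$), the $o$-term is at most $\tfrac{\eps}{2}\ell^2$ for all $\ell<\ell_0$, say with $\lambda_0\le 1$. The mixed term satisfies $C t|v^\perp|\le C\lambda_0\ell\cdot\ell = C\lambda_0\ell^2$. Choosing $\lambda_0\le \eps/(2C)$ makes this at most $\tfrac{\eps}{2}\ell^2$. Adding the two bounds gives $v^n+t\le\eps\ell^2$, which is \eqref{halfplaneeq}.

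The main point needing care is uniformity. The $o(|v|^2+t^2)$ in Lemma \ref{taylorlemma} must hold jointly as $(v,t)\to(0,0)$, and the constant in $O(t|v^\perp|)$ must be fixed independently of $\ell$ and $t$. Both follow from the statement of Lemma \ref{taylorlemma}: the $o$-term comes from the Alexandrov expansion \eqref{alexandroveq} at the single point $x_0$, and the $O$-term comes from \eqref{translationeq}, whose constant depends only on $K$. One further check is that $G_t(v)$ stays in the region where these expansions apply. This holds because $B_\ell(0)\subseteq B$ once $\ell_0<2R$, and $t<R$.
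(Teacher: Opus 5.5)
Your proposal is correct and follows essentially the same route as the paper: apply Lemma \ref{taylorlemma}, discard $q(v)\ge 0$, make the $o(|v|^2+t^2)$ term at most $\tfrac{\eps}{2}\ell^2$ by shrinking $\ell_0$, and make the mixed term $Ct|v|$ at most $C\lambda_0\ell^2$ by shrinking $\lambda_0$. The paper argues the contrapositive, showing $r(G_t(v))>0$ outside the half-space, and your choice of $\lambda_0\le 1$ before fixing $\ell_0$ handles the uniformity slightly more cleanly than the paper's ``for every $\lambda_0>0$'' phrasing, which implicitly needs $t$ small for the $o$-term to apply.
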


    \begin{proof}
        Let $\eps > 0$. By Lemma \ref{taylorlemma} and positive semidefiniteness of $q$, there exist $C,\ell_0 > 0$ such that for every $v \in B_\ell(0)$, every $\lambda_0 > 0$, every $0 < \ell < \ell_0$ and every $0 < t < \lambda_0\cdot \ell$,
        \[
            v^n + t \le r(G_t(v)) + C\cdot t|v| + (\eps/2)\cdot (|v|^2+t^2) \le  r(G_t(v)) + \left(C\cdot \lambda_0 + (\eps/2)(1 + \lambda_0^2)\right)\cdot \ell^2.
        \]
        Let $v \in B_\ell(0)$ and suppose that $v$ is not a member of the set on the right-hand side of \eqref{halfplaneeq}. Then $\eps\ell^2 < v^n + t$, so by the above inequality,
        \[
            \left(\eps - C\cdot\lambda_0 - (\eps/2)(1 + \lambda_0^2)\right)\cdot \ell ^2 < r(G_t(v)).
        \]
        Thus, if $\lambda_0$ is small enough that $\lambda_0/(1-\lambda_0^2) < \eps/2C$ then $r(G_t(v)) > 0$, whence
        $G_t(v) \notin K$.
    \end{proof}

    \begin{figure}
        \centering
        \begin{tikzpicture}[scale=1]

            \def\R{3}            
            \def\d{1.0}          

            \node[anchor=west] at (-\R-0.25,\R+0.25) {$T_{x_t}M$};

            \coordinate (O) at (0,0);

            \begin{scope}
                \clip (-\R-0.5,-\R-0.5) rectangle (\R+0.5,-\d);
                \fill[gray!20] (O) circle (\R);
            \end{scope}

            \draw[very thick] (O) circle (\R);

            \draw[very thick] (-\R-0.5,-\d) -- (\R+0.5,-\d);

            \fill (O) circle (2pt)
            node[right = 2pt] {$0$};

            \draw[->,gray] (-\R-1,0) -- (\R+1,0) node[below] {$v^\perp$};
            \draw[->,gray] (0,-\R-.5) -- (0,\R+.5) node[left] {$v^n$};

            \draw[dashed]
            (O) -- ($ (O) + ({140}:\R) $)
            node[midway,above] {$\ell$};

            \draw[decorate,decoration={brace,amplitude=5pt}]
            (0,-\d) -- (0,0)
            node[midway,xshift=-25pt] {$t-\varepsilon\,\ell^2$};
        \end{tikzpicture}
        \caption{Lemma \ref{halfspacelemma} asserts that only points lying in the shaded region can be mapped to $K$ under $G_t$, provided that $\ell < \ell_0$ and $t < \lambda_0 \cdot \ell$.}
        \label{halfspacefig}
    \end{figure}

    For a set $S$ in a vector space we denote by $\CC(0,S)$ the union of all line segments joining the origin to the set $S$ whose relative interiors do not intersect the set $S$.

    \begin{lemma}\label{eucconelemma}
        Let $x \in {U} \setminus K$ and let $V$ be a strongly-convex neighborhood of $x$ contained in ${U}$. Then
        \[
            \exp_x^{-1}\left(\CC(x,K)\cap V\right) \supseteq \CC(0,\exp_x^{-1}(K\cap V)),
        \]
        and if $\CC(x,K) \subseteq V$ then equality holds.
    \end{lemma}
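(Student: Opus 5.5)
The plan is to translate everything through the exponential map at $x$, using that $V$ is strongly convex and that minimizing geodesics from $x$ into $V$ are exactly the images of radial segments. Since $V \subseteq U \subseteq B_{4R}(x)$, the map $\exp_x$ restricted to $B_{4R}(0)$ is a diffeomorphism onto $B_{4R}(x)$ by property (iii). Moreover, for every $y \in V$ the unique minimizing geodesic from $x$ to $y$ is $s \mapsto \exp_x(s\exp_x^{-1}(y))$, $s\in[0,1]$, and it lies in $V$. Here we use that minimizing geodesics starting at $x$ of length $<4R$ correspond to radial segments in $B_{4R}(0)$, and that strong convexity of $V$ keeps them inside $V$. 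Write $\tilde K := \exp_x^{-1}(K\cap V)$.

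For the inclusion $\supseteq$, take a point $p \in \CC(0,\tilde K)$. It lies on a segment $[0,w]$ with $w \in \tilde K$ and $sw \notin \tilde K$ for $s\in[0,1)$. Let $\gamma(s) := \exp_x(sw)$. This is the minimizing geodesic from $x$ to $\exp_x(w) \in K\cap V$, and it lies in $V$. We must check that $\gamma(s) \notin K$ for $s<1$. If $\gamma(s)\in K$, then, since $\gamma(s)\in V$, we would have $sw \in \tilde K$, a contradiction. Hence $\gamma([0,1])\subseteq \CC(x,K)\cap V$, and therefore $p \in \exp_x^{-1}(\CC(x,K)\cap V)$. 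Note that $x\notin K$, so $0\notin\tilde K$, which is consistent.

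For equality when $\CC(x,K)\subseteq V$, take $y \in \CC(x,K)$. It lies on a minimizing geodesic $\gamma$ from $x$ to a point $z\in K$ with $\gamma([0,1))\subseteq M\setminus K$. Since $\gamma([0,1]) \subseteq \CC(x,K)\subseteq V$, the point $z$ lies in $K\cap V$ and the length of $\gamma$ is $<4R$. By uniqueness of minimizing geodesics in the strongly convex ball, $\gamma(s)=\exp_x(sw)$ with $w=\exp_x^{-1}(z)\in\tilde K$. For $s<1$ we have $\gamma(s)\notin K$, hence $sw\notin \tilde K$. So the whole segment $[0,w]$ lies in $\CC(0,\tilde K)$, and $\exp_x^{-1}(y)$ lies on it.

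The main (mild) obstacle is the identification of minimizing geodesics from $x$ with radial segments. The subtle point is that a minimizing geodesic $\gamma$ from $x$ to a point of $V$ automatically has initial velocity in $B_{4R}(0)$, because its length equals $d(x,\cdot)<4R$. It is therefore the radial one, and strong convexity of $V$ gives uniqueness and containment in $V$. Everything else is bookkeeping of the condition ``relative interior avoids $K$'' versus ``avoids $\tilde K$''. This is exactly where one needs the geodesics to stay in $V$: only there do membership in $K$ and membership in $\tilde K$ correspond.
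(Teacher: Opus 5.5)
Your argument follows the paper's route: pull back by $\exp_x$, use strong convexity of $V$ to identify minimizing geodesics from $x$ into $V$ with radial segments, then match ``relative interior avoids $K$'' with ``relative interior avoids $\exp_x^{-1}(K\cap V)$''. Your bookkeeping in both directions is correct and more explicit than the paper's.

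One supporting claim is false, though. $U=\{r<R\}$ is the $R$-neighbourhood of all of $K$, so $U\not\subseteq B_{4R}(x)$ in general (take $K$ of diameter much larger than $R$). Hence the hypothesis $V\subseteq U$ gives neither $V\subseteq B_{4R}(x)$ nor $d(x,y)<4R$ for $y\in V$. Your appeal to property (iii), and your remark that minimizing geodesics from $x$ into $V$ have length $<4R$, are therefore unjustified as written.

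You do not need them. Strong convexity of $V$ alone says that each $y\in V$ is joined to $x$ by a unique minimizing geodesic, and that geodesic lies in $V$. Every initial sub-arc of it is then the unique minimizing geodesic to its own endpoint, which is again in $V$.

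So read $\exp_x^{-1}(y)$ as $d(x,y)$ times the initial unit velocity of this geodesic. This coincides with the inverse of $\exp_x\vert_{B_{4R}(0)}$ from property (iii) whenever $V\subseteq B_{4R}(x)$, as in the paper's applications, where $V$ is a ball about $x$ of radius at most $2R$. With this reading, $\exp_x^{-1}(\exp_x(sw))=sw$ for all $w\in\exp_x^{-1}(V)$ and $s\in[0,1]$, and that identity is all both of your inclusions actually use. Make this replacement and drop the length-$<4R$ remark, and your proof is complete.
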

    \begin{proof}
        Since $\CC(x,K)$ consists of minimizing geodesics joining the point $x$ to the set $K$, and the set $V$ is strongly convex, the set $\exp_x^{-1}(\CC(x,K)\cap V)$ contains all line segments joining the origin in $T_xM$ to the set $\exp_x^{-1}(K\cap V)$. The desired inclusion then follows from the definitions of $\CC(x,K)$ and $\CC(0,\exp_x^{-1}(K\cap V))$. If $\CC(x,K) \subseteq V$ then every minimizing geodesic joining $x$ to $K$ is contained in $V$, so its inverse image under $\exp_x$ is contained in $\CC(0,\exp_x^{-1}(K\cap V))$.
    \end{proof}

    \begin{lemma}\label{degeneratelemma}
        Proposition \ref{volumeprop} holds true when $\kappa(x_0) = 0$, i.e. in this case
        \[
            t^{-1}\cdot\Vol(\CC(x_t,K))^{\frac{2}{n+1}}\xrightarrow{t \to 0}\infty.
        \]
    \end{lemma}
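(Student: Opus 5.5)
We need to show that $\Vol(\CC(x_t,K))/t^{(n+1)/2}\to\infty$ when some principal curvature vanishes, say $\kappa_1=0$. The approach is a lower bound: we exhibit a large Euclidean cone inside $G_t^{-1}(\CC(x_t,K))$. The relevant region is a small "cap" of $K$ near $x_0$: in the coordinates $v=G_t^{-1}(\cdot)$, the point $x_t$ sits at the origin and $K$ lies roughly in $\{v^n\le -t - q(v)\}$. Flatness in the $w_1$ direction makes this cap long in that direction, and the cone from the origin over it has large volume.

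\textbf{Step 1 (quadratic inner bound for $K$).} By Lemma \ref{taylorlemma}, for any $\eps>0$ there is $\ell_0$ such that for $|v|<\ell$ and $t<\lambda_0\ell$ (with $\lambda_0$ small depending on $\eps$),
\[
r(G_t(v))\le t+v^n+q(v)+\eps\ell^2 .
\]
Hence every $v\in B_\ell(0)$ with $v^n\le -t-q(v)-\eps\ell^2$ satisfies $G_t(v)\in K$. Take the set
\[
E:=\{v\in B_\ell(0): v^n=-t-\Lambda|v^\perp|^2-2\eps\ell^2\},
\]
where $\Lambda=\max\kappa_i$, or better the flattened set in which the coefficient of $(v^1)^2$ is $\eps$ instead of $\kappa_1=0$. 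Then $E\subseteq G_t^{-1}(K)$.

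\textbf{Step 2 (a cone contained in $G_t^{-1}(\CC(x_t,K))$).} Use Lemma \ref{eucconelemma} with $V=G_t(B_\ell)$, which is strongly convex. This gives $\exp_{x_t}^{-1}(\CC(x_t,K))\supseteq \CC(0,\exp_{x_t}^{-1}(K\cap V))$. By Lemma \ref{halfspacelemma}, $K\cap V$ lies below the hyperplane $v^n=-t+\eps\ell^2$. A segment from $0$ to a point of the thin region is therefore blocked by $K$ only beyond that hyperplane. Consequently the union of segments from $0$ to points of $\{v^n=-t/2\}\cap\{\text{projection onto }v^\perp\text{ lies in } D\}$ is contained in $\CC$, where $D$ is the set over which the lower graph $-t-\tilde q(v^\perp)$ stays above $-2t$, roughly. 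Cleaner: choose $\ell = t/\lambda_0$ (the largest allowed) and take all segments from $0$ to the points of $K$ hit first. The cone from $0$ over the region $\{v^n\in[-2t,-t-\eps\ell^2]\}$ intersected with the inner paraboloid set has volume comparable to $t\cdot \mathcal H^{n-1}(D)$, where
\[
D=\{v^\perp:\ \eps (v^1)^2+\Lambda\sum_{i\ge2}(v^i)^2\le t/2\}.
\]
Thus
\[
\mathcal H^{n-1}(D)\gtrsim (t/\eps)^{1/2}\,t^{(n-2)/2},\qquad \Vol_{\RR^n}(\text{cone})\gtrsim c\,\eps^{-1/2}t^{(n+1)/2}.
\]
We must also ensure that $D\subseteq B_\ell$, i.e.\ $(t/\eps)^{1/2}\ll \ell\sim t/\lambda_0$. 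This requires the relation between $\eps$ and $\lambda_0$ to be handled carefully, and it is the main obstacle: the $O(t|v^\perp|)$ error in Lemma \ref{taylorlemma} forces $t\lesssim\lambda_0\ell$. I would instead fix $\eps$, let $\ell$ be small but fixed relative to $\eps$, and then send $t\to0$ with $\ell$ fixed, so that $t|v^\perp|\le t\ell$ is negligible against $t$ when $\ell$ is small. This gives a constant independent of $t$ multiplying $\eps^{-1/2}$; the diameter $(t/\eps)^{1/2}$ fits inside $B_\ell$ once $t$ is small.

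\textbf{Step 3 (conclusion).} Lemma \ref{Gtlemma} converts Euclidean volume into Riemannian volume with error $O(\ell^{n+2})$. Since this error is not small relative to $t^{(n+1)/2}$, I would apply Lemma \ref{Gtlemma} with $\ell$ replaced by the actual radius of the cone, which is $\sim(t/\eps)^{1/2}$. The error is then $O(t^{(n+2)/2})$, which is negligible. We obtain $\liminf_{t\to0} t^{-(n+1)/2}\Vol(\CC(x_t,K))\ge c\,\eps^{-1/2}$ for every $\eps>0$, and raising to the power $2/(n+1)$ gives the claimed divergence.
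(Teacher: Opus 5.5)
Your overall strategy is the paper's: use the flat direction to place a long base inside $G_t^{-1}(K)$ at depth $\sim t$, cut the cone over it with the halfspace from Lemma \ref{halfspacelemma}, and transfer volume with Lemma \ref{Gtlemma} at scale $\sqrt t$. However, Step 2 fails as you resolve it.

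\textbf{The constraint is read backwards.} The obstacle you identify does not exist. The hypothesis $0<t<\lambda_0\ell$ of Lemma \ref{halfspacelemma} is a \emph{lower} bound $\ell>t/\lambda_0$. So $\ell=t/\lambda_0$ is the smallest admissible radius, not the largest. Any $\ell\sim\sqrt t$ is admissible for all small $t$, because $t=o(\sqrt t)$.

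\textbf{Freezing $\ell$ breaks both bounds.} Your fix is to freeze $\ell$ and let $t\to0$. Then the confinement $G_t^{-1}(K)\cap B_\ell\subseteq\{v^n\le -t+\eps\ell^2\}$ becomes vacuous once $t<\eps\ell^2$; it no longer even excludes points above the level of the origin. Nothing then prevents $K$ from meeting the segments from $0$ before they reach the base, so you cannot conclude that the cone lies in $\CC(x_t,K)$. Likewise, your set $E$ is built with the uniform remainder $\eps\ell^2$ on $B_\ell$, so it sits at a fixed depth $\approx 2\eps\ell^2\gg t$ rather than at depth $\sim t$. Neither side of the argument operates at scale $t$. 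The claimed cone of height $\sim t$ over a base of area $\sim\eps^{-1/2}t^{(n-1)/2}$ is therefore not justified.

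\textbf{The real constraint is a balance of constants.} You also hide this by using a single $\eps$ everywhere. To have $D\subseteq B_\ell$ you need $\ell^2\gtrsim t/\eps$, and then $\eps\ell^2\gtrsim t$. So the halfspace bound is useless unless its tolerance is a small multiple of the flattening parameter.

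\textbf{How the paper sets this up.} With $m=1/\eps$:
\begin{itemize}
\item It takes $\ell=2\sqrt{mt}$ and applies Lemma \ref{halfspacelemma} with tolerance $1/(8m)$. Then $\tfrac{1}{8m}\ell^2=t/2$, and $G_t^{-1}(K)\cap B_\ell\subseteq\{v^n\le -t/2\}$ for $t<\min\{\lambda_0^2m,\ \ell_0^2/(4m)\}$.
\item The base is a cylinder at height $-\tfrac32 t$, of length $2\sqrt{mt}$ in the flat direction and radius $\sqrt{at}$ in the others. It is shown to lie in $G_t^{-1}(K)$ via Lemma \ref{taylorlemma}, with remainder $Ct|v|+\tfrac{1}{8m}|v|^2$.
\item The part of the cone above $v^n=-t/2$ has volume $\gtrsim m^{1/2}t^{(n+1)/2}$. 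The Lemma \ref{Gtlemma} error is $O(m^{(n+2)/2}t^{(n+2)/2})$, which is negligible for fixed $m$.
\end{itemize}
With these choices of scale and constants, your Steps 1--3 go through.
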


    \begin{proof} 
        Let 
        \[
            m \ge 1.
        \]
        Without loss of generality, the vector $w_{n-1} = L_0e_{n-1} \in T_{x_0}M$ is in the kernel of $\sff\vert_{x_0}$, i.e.
        \begin{equation}\label{wnminusoneeq}
                2\cdot q(e_{n-1}) = \sff(w_{n-1}) = 0.
        \end{equation}
        
        By Lemma \ref{taylorlemma}, there exist $C\ge 1$ and $t_0 = t_0(x_0) \le \tfrac{1}{512mC^2}$ such that for all $v \in B$,
        \begin{equation}\label{eq:rGtcylinder}
            r(G_t(v)) \le t + v^n + q(v) + C \cdot t|v| + \tfrac{1}{8m}|v|^2, \qquad t \in (0,t_0).
        \end{equation}
        Write $\hat \kappa:=\max_i\kappa_i$. For $t \in (0,t_0)$, let $Q_t\subseteq \RR^n$ denote the $(n-1)$-dimensional cylinder given by
        \[
            Q_t : = \left\{-\frac32\cdot t\cdot e_n + s \cdot e_{n-1} +  u \quad \middle| \quad -\sqrt{mt} \le s \le \sqrt{mt}, \, \,  u \in \{e_{n-1},e_n\}^\perp, \, \, |u| < \sqrt{at}\,\,\right\},
        \]
        where
        \[
            a : = \min\left\{\,1\,,\,\frac{1}{16\hat \kappa}\right\}
        \]
        if $\hat\kappa > 0$, and $a:=1$ if $\hat\kappa=0$.
        Then 
        \begin{equation}\label{eq:QtinBall}
            Q_t\subseteq B_{2\sqrt{mt}}(0),
        \end{equation}
        and for $v \in Q_t$, the component of $v$ orthogonal to $\operatorname{span}\{e_{n-1},e_n\}$ has norm at most $\sqrt{at}$, whence $q(v)\le \tfrac12\hat\kappa at$ by \eqref{wnminusoneeq}. Since $m \ge 1$, $a \le 1$ and $t < t_0$,
        \[
            |v|^2 = \tfrac94 t^2 + s^2 + |u|^2 \le \tfrac94 t^2 + mt + at \le \left(2m + \tfrac{9t_0}{4}\right)t.
        \]
        Thus, by \eqref{eq:rGtcylinder} and our choices of $a$ and $t_0$, for every $v \in Q_t$ and every $t < t_0$,
        \begin{align*}
            r(G_t(v)) & \le t - \frac32\cdot t + \frac12\hat \kappa \cdot at + C \cdot t|v| + \tfrac{1}{8m}|v|^2\\
            & \le t \cdot \left(-1/2 + \frac12a\hat\kappa + 2C \cdot \sqrt{t_0m} + 1/4 + \frac{9t_0}{32m}\right)
            \\
            & \le -t/8\\
            & < 0,
        \end{align*}
        whence
        \begin{equation}\label{eq:GtQ}
            G_t(Q_t) \subseteq K \qquad \text{ for all $0 <t < t_0$}.
        \end{equation}
        Let
        \[
            t_1:= \min\left\{t_0, \lambda_0^2m,\frac{\ell_0^2}{4m}\right\},
        \]
        where $\ell_0$ and $\lambda_0$ are as in the conclusion of Lemma \ref{halfspacelemma} when we take $\eps = 1/8m$. Let $0 < t < t_1$ and let
        \[
            \ell : = 2\sqrt{mt}.
        \]
        Then 
        \[
            \ell < \ell_0 \qquad \text{ and } \qquad \lambda_0 \cdot \ell > \sqrt{\frac{t}{m}}\cdot 2\sqrt{mt} > t,
        \]
        whence by Lemma \ref{halfspacelemma},
        \begin{equation}
        \begin{split}\label{eq:Gtminus1K}
            G_t^{-1}(K)\cap B_\ell(0) & \subseteq \left\{v \in B_\ell(0) \subseteq \RR^n \,\, \big\vert \,\, v^n + t \le  \frac{\ell^2}{8m}\right\}\\
            & = \left\{v \in B_\ell(0) \subseteq \RR^n \,\, \big\vert \,\, v^n + t/2 \le 0\right\}.
        \end{split}
        \end{equation}

        \medskip
        Recall that $\CC(0,Q_t)$ is the union of all line segments joining the origin to the set $Q_t$ whose relative interiors lie outside the set $Q_t$. It follows from \eqref{eq:QtinBall} that $\CC(0,Q_t) \subseteq B_\ell(0)$. Let 
        \[
            Z_t : = \{v \in \CC(0,Q_t) \,\,\colon\,\, v^n + t/2 > 0\}.
        \]
        If $v \in Z_t$ then $v \notin G_t^{-1}(K)$ by \eqref{eq:Gtminus1K}. But $v \in \CC(0,Q_t)$, and $Q_t \subseteq G_t^{-1}(K)$ by \eqref{eq:GtQ}, so $v$ lies on a line segment joining the origin to the set $G_t^{-1}(K)$, i.e. $v \in \CC(0,G_t^{-1}(K))$. Hence, by Lemma \ref{eucconelemma} and the fact that $\CC(0,\cdot)$ commutes with linear maps,
        \begin{align*}
            Z_t & \subseteq \CC(0,G_t^{-1}(K)) \\
            & = \CC(0,L_t^{-1}(\exp_{x_t}^{-1}(K))) \\
            & = L_t^{-1}\left(\CC(0,\exp_{x_t}^{-1}(K))\right)\\
            & \subseteq L_t^{-1}\left(\exp_{x_t}^{-1}\left(\CC(x_t,K)\right)\right)\\
            & = G_t^{-1}\left(\CC(x_t,K)\right).
        \end{align*}
        Here $G_t^{-1}(K)$ and $\exp_{x_t}^{-1}(K)$ are understood as $G_t^{-1}(K \cap G_t(B'))$ and $\exp_{x_t}^{-1}(K \cap B_{2R}(x_t))$ respectively, so that $G_t$ and $\exp_{x_t}$ restrict to diffeomorphisms. It therefore follows from Lemma \ref{Gtlemma} that
        \begin{align*}
            \Vol(\CC(x_t,K)) & \ge \Vol_{\RR^n}\left(Z_t\right) + O(\ell^{n+2})\\
            & = \Vol_{\RR^n}\left(Z_t\right) + O\left(m^{\frac{n+2}{2}}t^{\frac{n+2}{2}}\right).
        \end{align*}

        The set $\CC(0,Q_t)$ is a cone with base $Q_t$, height $3t/2$ and apex at the origin. Hence, the set $Z_t$ is a cone with base of volume $(1/3)^{n-1}\cdot \Vol(Q_t) = (1/3)^{n-1} \cdot 2\cdot (mt)^{\frac12}\cdot \vartheta_{n-2}\cdot (at)^{\frac{n-2}{2}}$ and height $t/2$, so 
        \begin{align*}
            \Vol(\CC(x_t,K)) & \ge \hat c\cdot m^{\frac12}\cdot t^{\frac{n+1}{2}}\cdot a^{\frac{n-2}{2}} - \hat C\cdot m^{\frac{n+2}{2}}\cdot t^{\frac{n+2}{2}}\\
            & \ge (\hat c/2) \cdot a^{\frac{n-2}{2}}\cdot m^{\frac12}\cdot t^{\frac{n+1}{2}},
        \end{align*}
        where the last inequality holds true provided that
        \[
            t < t_2 : = \min\left\{t_1 \,,\, \frac{\hat c^2}{4\hat C^2}\cdot m^{-n-1} \cdot a^{n-2}\right\}.
        \]
        It follows that 
        \[
            t^{-1}\cdot \Vol(\CC(x_t,K))^{\frac{2}{n+1}} \ge  c'\cdot a^{\frac{n-2}{n+1}}\cdot m^{\frac{1}{n+1}} \qquad \text{ for all $0 < t < t_2$.}
        \]
        Since $m$ is arbitrary and $a$ depends only on $x_0$, this finishes the proof of the lemma.
    \end{proof}

    For the rest of the present section, we assume that
    \[
        \kappa(x_0) > 0.
    \]
    Since the boundary of $K$ is strictly convex at $x_0$, for small $t$ the set $\CC(x_t,K)$ resembles a cone of height $t$ over a paraboloid, and therefore shrinks at rate $\sqrt t$ as $t \to 0$. This is the content of the next lemma. For $0 < t < R$, let $\ell(t)$ denote the smallest positive number such that
    \[
        \CC(x_t,K) \subseteq B_{\ell(t)/2}(x_t),
    \]
    i.e.
    \[
        \ell(t) : = 2 \cdot \sup\{d(x_t, y) : y \in \CC(x_t,K)\},
    \]
    see Figure \ref{smallballfig}.

    \medskip
    In what follows, some of the constants implied by big-O notation are allowed to depend on the point $x_0$. When this is the case, we will use the notation $O_{x_0}(\,\cdot\,)$.
    
    \begin{lemma}\label{smallballlemma} $\ell(t) = O_{x_0}\left(t^{1/2} \,\right)$
    \end{lemma}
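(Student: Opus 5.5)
We work in the coordinates $G_t$ and use strict convexity at $x_0$, where all $\kappa_i>0$. Set $\underline\kappa:=\min_i\kappa_i>0$, so that $q(v)\ge \tfrac12\underline\kappa|v^\perp|^2$. By Lemma \ref{taylorlemma}, there exist $\ell_1>0$ and $t_0>0$ (depending on $x_0$) such that for $v\in B_{\ell_1}(0)$ and $0<t<t_0$,
\[
    r(G_t(v)) \ge t + v^n + \tfrac14\underline\kappa|v^\perp|^2 - C t|v^\perp| - \eps(|v|^2+t^2),
\]
with $\eps$ small compared to $\underline\kappa$. Using $Ct|v^\perp|\le \tfrac18\underline\kappa|v^\perp|^2 + C't^2$, this becomes
\[
    r(G_t(v))\ge t+v^n+c|v^\perp|^2-\eps (v^n)^2 - C''t^2 .
\]
Consequently the set $G_t^{-1}(K)\cap B_{\ell_1}(0)$ lies in the region
\[
    P_t:=\bigl\{v : v^n \le -t + C''t^2 - c|v^\perp|^2 + \eps (v^n)^2\bigr\},
\]
which for small $t$ is essentially the solid paraboloid $\{v^n\le -t/2 - c|v^\perp|^2\}$, provided $|v^n|$ is small.

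Next we localize $\CC(x_t,K)$. Take $y\in\CC(x_t,K)$ lying on a minimizing geodesic from $x_t$ to a point $z\in K$. Since $d(x_t,z)\ge d(x_t,K)=t$ and $K$ satisfies \eqref{eq:Kcondition}, the key point is that $z$ is close to $x_0$. The argument is that any segment from $x_t$ to $K$ whose relative interior avoids $K$ must first enter $K$ at a point $z$ of $\Sigma$ which is ``visible'' from $x_t$. For every such point, the outer normal direction at $z$ must make a nonnegative inner product with the direction from $z$ toward $x_t$; otherwise the segment would have entered $K$ earlier, using Lemma \ref{tangentconelemma} and convexity. Points of $\Sigma$ far from $x_0$ cannot satisfy this for $t$ small, by a compactness and continuity argument: $x_t\to x_0$, and from points of $\Sigma$ at distance $\ge \ell_1/2$ from $x_0$, the point $x_0$ lies strictly ``behind'' the boundary.

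More quantitatively, we work in $T_{x_t}M$ via $G_t$ and Lemma \ref{eucconelemma}, with $V=G_t(B_{\ell_1})$, which is strongly convex. We first show $\CC(x_t,K)\subseteq V$ for $t$ small. This uses the Hausdorff-type fact that the geodesics from $x_t$ to $K$ that do not enter $K$ stay near $x_0$. We then show that in $\RR^n$ the Euclidean cone $\CC(0,S)$, for $S\subseteq P_t\cap B_{\ell_1}$, is contained in $B_{C\sqrt t}(0)$. For the latter, let $w\in S$ be the first point of a ray from $0$. The ray segment $[0,w]$ lies in the convex region complementary to $P_t$ until it hits $w$, so the hitting point must satisfy $w^n\ge -C t$. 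Indeed, the supporting hyperplane argument for the paraboloid shows a segment from a point above a convex set first meets it on the part of the boundary visible from the origin, which here is within height $O(t)$ below. Together with $w^n\le -t/2-c|w^\perp|^2+\dots$, this gives $c|w^\perp|^2\le Ct$, hence $|w|=O(\sqrt t)$. The convexity of the region $P_t$ in the chart is only approximate because of the $\eps (v^n)^2$ term, so we instead use a slightly smaller genuine paraboloid containing $S$ on the relevant range of heights.

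The main obstacle is the a priori localization: excluding geodesics from $x_t$ that reach $K$ far from $x_0$, outside the ball where the Taylor expansion is valid. We would handle it first by noting that any minimizing geodesic from $x_t$ to $K$ with interior outside $K$ hits $\Sigma$ at a point $z$ with $d(x_t,z)$ bounded by a quantity tending to $0$ with $t$. To see this, argue by contradiction using a sequence $t_k\to0$ with limit points $z_k\to z_\infty\ne x_0$. The limiting geodesic from $x_0$ to $z_\infty$ lies in $K$ by \eqref{eq:Kcondition}. Its initial velocity must be orthogonal to $\nu\vert_{x_0}$ or point into $T_{x_0}K$. Strict convexity at $x_0$, that is $\kappa_i>0$ in the expansion \eqref{alexandroveq}, forbids tangential directions staying in $K$, so the direction points into $\mathrm{int}K$. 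Then the approximating geodesics from $x_{t_k}$ also enter $\mathrm{int}K$ before reaching $z_k$, which is a contradiction. Once localization to $B_{\ell_1}$ is secured, the paraboloid estimate above yields $\ell(t)=O_{x_0}(\sqrt t)$.
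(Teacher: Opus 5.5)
Your localization step is sound and is essentially the first case of the paper's proof. A limit geodesic from $x_0$ to $z_\infty\neq x_0$ lies in $K$ by \eqref{eq:Kcondition}. It cannot be tangent to $\Sigma$ at $x_0$ because $\sff\vert_{x_0}$ is positive definite, so it enters $\mathrm{int}K$, which the approximating geodesics cannot do.

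The gap is in the final Euclidean step. You only prove the outer inclusion $S:=G_t^{-1}(K)\cap B_{\ell_1}(0)\subseteq P_t$. You then claim that the segment from $0$ to the first point $w$ of $S$ on a ray stays outside $P_t$ until it reaches $w$. This reverses the inclusion. The segment $[0,w)$ avoids $S$, and since $S\subseteq P_t$ it may well pass through $P_t\setminus S$. The supporting-hyperplane/visibility argument locates the first point of $P_t$ on the ray, not the first point of $S$. In fact your claim that $\CC(0,S)\subseteq B_{C\sqrt t}(0)$ for every $S\subseteq P_t\cap B_{\ell_1}$ is false. Take a thin convex set inside $P_t$ joining $-te_n$ to a point $p$ deep inside $P_t$. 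The ray from $0$ through $p$ first meets it near $p$, far from the origin. So no argument that uses only $S\subseteq P_t$ can give $w^n\ge -Ct$.

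What is missing is the reverse inclusion, from the upper half of Lemma \ref{taylorlemma}:
\[
r(G_t(v))\le t+v^n+q(v)+Ct|v^\perp|+\eps(|v|^2+t^2).
\]
This says $S$ contains a paraboloid of comparable opening just below $-te_n$. The dangerous rays are the nearly horizontal ones, and only this inner inclusion catches them. With both inclusions your visibility idea can be completed as follows.
\begin{itemize}
\item A ray that enters the inner paraboloid does so within distance $O(\sqrt t)$ of $0$, and $w$ comes no later.
\item A ray that misses the inner paraboloid has $|u^n|=O(\sqrt t)$. Its whole intersection with $P_t$ then lies within distance $O(\sqrt t)$ of $0$.
\end{itemize}

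The paper reaches the same conclusion more economically, by contradiction. Suppose the hitting point is at distance $s_j=M_j\sqrt{t_j}$ with $M_j\to\infty$. Then $r(z_j)=0$ forces $u_j^n\to 0$ and $v_j^n/(M_j^2t_j)\to -q(u)<0$. The upper Taylor bound then gives
\[
r(\gamma_j(\sqrt{t_j}))= t_j\bigl(1+q(u)-q(u)M_j+o(M_j)\bigr)<0,
\]
so $\gamma_j$ would already have entered $K$ before reaching $z_j$, a contradiction.
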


    \begin{figure}
        \centering
        \begin{tikzpicture}[scale=.6]
            \definecolor{lightpurple}{RGB}{190,190,230}
            \definecolor{darkpurple}{RGB}{90,90,160}
            \definecolor{darkgray}{RGB}{180,180,180}

            \def\R{12}
            \pgfmathsetmacro{\Cy}{-11.5}       
            \def\py{0.85}                       
            \coordinate (xt) at (0, \py);

            \pgfmathsetmacro{\dd}{\py - \Cy}
            \pgfmathsetmacro{\tx}{\R*sqrt(\dd*\dd - \R*\R)/\dd}
            \pgfmathsetmacro{\ty}{\Cy + \R*\R/\dd}

            \pgfmathsetmacro{\innerR}{sqrt(\dd*\dd - \R*\R)}
            \pgfmathsetmacro{\outerR}{2*\innerR}

            \def\xext{7}
            \pgfmathsetmacro{\arcstart}{180 - acos(\xext/\R)}
            \pgfmathsetmacro{\arcend}{acos(\xext/\R)}

            \pgfmathsetmacro{\clipbot}{\Cy + sqrt(\R*\R - \xext*\xext) - 3.5}
            \pgfmathsetmacro{\cliptop}{\py + \outerR + 0.5}
            \clip ({-\outerR-1}, \clipbot) rectangle ({\outerR+1}, \cliptop);

            \fill[lightpurple, opacity=0.8]
                (xt) -- (-\tx, \ty) -- (\tx, \ty) -- cycle;

            \fill[darkgray]
                ({-\xext}, {\Cy + sqrt(\R*\R - \xext*\xext)})
                arc (\arcstart:\arcend:\R)
                -- (\xext, \clipbot) -- (-\xext, \clipbot) -- cycle;
            \draw[thick]
                ({-\xext}, {\Cy + sqrt(\R*\R - \xext*\xext)})
                arc (\arcstart:\arcend:\R);

            \draw[darkpurple] (xt) circle (\innerR);   
            \draw[darkpurple] (xt) circle (\outerR);   
            \fill[darkpurple, opacity=0.08] (xt) circle (\outerR);   

            \node[darkpurple] at ({\outerR*0.8}, {\py+\outerR*0.87}) {$B_{\ell(t)}(x_t)$};

            \fill (xt) circle (1.8pt);
            \node[above] at (xt) {$x_t$};

            \node at (-4, -1) {\Large $K$};
        \end{tikzpicture}
        \caption{The ball $B_{\ell(t)}(x_t)$ has twice the radius of the smallest ball containing $\CC(x_t,K)$.}
        \label{smallballfig}
    \end{figure}

    \begin{proof}
        If this is not the case, then there exists a sequence $t_j \searrow 0$ and a sequence $y_j \in \CC(x_{t_j},K)$ such that
        \begin{equation}\label{ynfareq}
            \frac{d(x_{t_j},y_j)}{\sqrt {t_j}}\to \infty.
        \end{equation} 
        By the definition of $\CC(x_{t_j},K)$, there exists a sequence $\gamma_j:[0,s_j] \,{\to}\, M$ of unit-speed minimizing geodesics satisfying
        \[
            \gamma_j(0) = x_{t_j}, \quad \gamma_j(s_j) =: z_j \in \partial K, \quad \gamma_j(s)\notin K \quad \forall s \,{\in}\, [0,s_j),
        \]
        and
        \[
            \gamma_j(d(x_{t_j},y_j)) = y_j .
        \]
        Since each $\gamma_j$ is a unit-speed minimizing geodesic, we have $s_j=d(x_{t_j},z_j)\ge d(x_{t_j},y_j)$, so by \eqref{ynfareq},
        \begin{equation}\label{sjoversqrttjeq}
            \frac{s_j}{\sqrt{t_j}} = : M_j \to \infty.
        \end{equation}
        By compactness of $K$, we may assume that $z_j \to z \in \partial K$. 
        
        \medskip
        Suppose that $z \ne x_0$. Upon passing to a subsequence, the geodesics $\gamma_j$ converge to a nonconstant geodesic $\gamma$ joining $x_0$ to $z$. Since $x_0$ and $z$ belong to $K$, assumption \eqref{eq:Kcondition} implies that the geodesic $\gamma$ must be contained in $K$. But the relative interiors of the geodesics $\gamma_j$ are disjoint from $K$, and so $\gamma$ is in fact contained in $\partial K$. This forces $\sff(\dot\gamma(0)) = 0$, a contradiction to the assumption $\kappa(x_0) > 0$.

        \medskip
        We may therefore assume that 
        $z_j \to x_0$,  in which case the vector
        \[
            v_j : = G_{t_j}^{-1}(z_j) \in B
        \]
        is well-defined (for large enough $j$, and without loss of generality for all $j$). We may further assume that the unit vectors
        \[
            u_j : = \frac{v_j}{|v_j|} = \frac{v_j}{s_j}
        \]
        converge to a unit vector $u$. Note that by the definition of $G_{t_j}$,
        \begin{equation}\label{gammajseq}
            \gamma_j(s) = G_{t_j}(su_j), \qquad s \in [0,s_j].
        \end{equation}
        Since $r(z_j)=0$, Lemma \ref{taylorlemma} gives
        \[
            0=\frac{t_j}{s_j}+u_j^n+s_jq(u_j)+O(t_j)+o(s_j).
        \]
        Since $s_j\to0$ and $t_j/s_j=\sqrt{t_j}/M_j\to0$, it follows that $u^n=0$. Hence $u\in e_n^\perp$, and the assumption $\kappa(x_0)>0$ implies that $q(u)>0$.
        By Lemma \ref{taylorlemma},   
        \begin{align*}
            0 = r(z_j)
            & = t_j + v_j^n + q(v_j) + O(t_js_j) + o(s_j^2)\\
            & = t_j + v_j^n + s_j^2q(u_j) + O(t_js_j) + o(s_j^2)\\
            & = t_j\left( 1 +\frac{v_j^n}{t_j} + M_j^2(q(u)+o(1)) + O(s_j)\right).
        \end{align*}
        The expression in parentheses therefore vanishes. But $s_j\to 0$ since $z_j \to x_0$, whence necessarily
        \begin{equation}\label{vnoverMteq}
            \frac{v_j^n}{M_j^2t_j}\to -q(u)<0.
        \end{equation}
        By Lemma \ref{taylorlemma} and by \eqref{sjoversqrttjeq}, \eqref{gammajseq} and \eqref{vnoverMteq},
        \begin{align*}
            r\left(\gamma_j\left(\sqrt{t_j}\right)\right) & = 
            r\left(G_{t_j}\left(\sqrt{t_j}u_j\right)\right)\\
            & = t_j + \sqrt{t_j}\cdot u_j^n + t_j\cdot q(u_j) + O\left(t_j^{3/2}\right) + o(t_j)\\
            & = \frac{1}{M_j}\cdot v_j^n + t_j\cdot (1 + q(u) + o(1)) + o(t_j),
        \end{align*}
        which by \eqref{vnoverMteq} is negative for large $j$, contradicting the fact that $\gamma_j(s) \notin K$ for all $ 0 \le s < s_j$.
    \end{proof}

    \begin{figure}
        \centering
        \begin{tikzpicture}[scale=1]
            \def\R{3}   
            \def\s{1}   

            \draw[->,gray] (-\R-0.5,0) -- (\R+0.5,0) node[below] {$v^\perp$};
            \draw[->,gray] (0,-\R-0.5) -- (0,\R+0.5) node[left] {$v^n$};

            \begin{scope}
                \clip (0,0) circle (\R);
                \draw[very thick,domain=-\R-1:\R+1,samples=400]
                plot (\x, {-0.1*(\x)*(\x) - \s});
            \end{scope}

            \begin{scope}
                \clip (0,0) circle (\R);
                \clip
                plot[domain=-\R-1:\R+1,samples=400] (\x, {-0.1*(\x)*(\x) - \s})
                -- (\R+1,-\R-1) -- (-\R-1,-\R-1) -- cycle;
                \fill[gray!20, opacity = .5] (-\R-1,-\R-1) rectangle (\R+1,\R+1);
            \end{scope}

            \draw[thick, gray] (0,0) circle (\R);
            \node[gray] at (.55*\R,.55*\R) {$B_{\ell(t)}(0)$};

            \node at (1.5,-2) {$P_s$};

            \draw[decorate,decoration={brace,amplitude=5pt}]
            (0,-\s) -- (0,0);

            \node at ({-\s/2},{-\s/2}) {$-s$};
        \end{tikzpicture}
        \caption{The set $P_s$.}
        \label{Ptfig}
    \end{figure}

    For $s \in \RR$, write
    \[
        P_s{} : = \left\{\,\,v \in \RR^n\quad \big\vert \quad s + v^n + q(v) \le 0 \right\},
    \]
    see Figure \ref{Ptfig}. It is not hard to see that the cone $\CC(0,P_s)$ is contained in the Euclidean ball $B_{\mu(s)}(0)$, where $\mu(s) := \sqrt{2s/\min_i\kappa_i + 4s^2}$. The next lemma asserts that we can approximate the set $G_t^{-1}(\CC(x_t,K))$ by the set $\CC(0,P_t)$.
    \begin{lemma}\label{sandwichlemma} 
        For every $\eps > 0$ there exists $t_0 > 0$ such that for every $0 < t < t_0$,
        \begin{equation}\label{eq:sandwich}
            \CC\left(0,P_{(1+\eps)t}\right) \setminus P_{(1-\eps)t}
            \subseteq 
            G_t^{-1}\left(\CC(x_t,K)\right)
            \subseteq
            \CC\left(0,P_{(1-\eps)t}\right) \cup \left(P_{(1-\eps)t}\setminus P_{(1+2\eps)t}\right),
        \end{equation}
    \end{lemma}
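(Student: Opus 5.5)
We work entirely in the chart $G_t$ and localize first. By Lemma \ref{smallballlemma}, $\CC(x_t,K)\subseteq B_{\ell(t)/2}(x_t)$ with $\ell(t)\le C_{x_0}\sqrt t$. Also $\CC(0,P_{(1+\eps)t})\subseteq B_{\mu((1+\eps)t)}(0)$ with $\mu(s)=O_{x_0}(\sqrt s)$. So every set appearing in \eqref{eq:sandwich} lies in a Euclidean ball $B_{\ell}(0)$ with $\ell=C\sqrt t$, after enlarging $C$. On such a ball $|v|^2=O(t)$ and $t|v^\perp|=O(t^{3/2})$. Lemma \ref{taylorlemma} then gives the uniform estimate
\[
    r(G_t(v)) = t + v^n + q(v) + o(t), \qquad v\in B_{C\sqrt t}(0),
\]
as $t\to0$.

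It follows that for any $\eps'>0$ and all small $t$,
\[
    P_{(1+\eps')t}\cap B_{C\sqrt t}\subseteq G_t^{-1}(K)\cap B_{C\sqrt t}\subseteq P_{(1-\eps')t}.
\]
Indeed, if $t+\eps' t+v^n+q(v)\le0$ then $r<0$, and if $r\le 0$ then $(1-\eps')t+v^n+q(v)\le0$. Because $B_{4R}(x_t)$ is strongly convex and, for small $t$, contains $\CC(x_t,K)$, Lemma \ref{eucconelemma} gives the identity
\[
    G_t^{-1}(\CC(x_t,K))=\CC(0,A_t), \qquad A_t:=G_t^{-1}(K).
\]
This uses that $\CC(0,\cdot)$ commutes with the linear isometry $L_t$. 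The problem is now purely Euclidean: compare $\CC(0,A_t)$ with the cones over $P_s$, given the sandwich $P_{(1+\eps')t}\subseteq A_t\subseteq P_{(1-\eps')t}$ locally.

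\emph{Left inclusion.} Take $v\in\CC(0,P_{(1+\eps)t})\setminus P_{(1-\eps)t}$. Then $v$ lies on a segment $[0,p]$ with $p\in P_{(1+\eps)t}\subseteq A_t$. Since $v\notin P_{(1-\eps)t}\supseteq A_t$ (choose $\eps'\le\eps$), we have $v\notin A_t$. Let $p'$ be the first point of $[0,p]$ lying in the closed set $A_t$. The origin is not in $A_t$, because $r(x_t)=t>0$. The segment $[0,p']$ meets $A_t$ only at $p'$, and $v\in[0,p']$ because $v\notin A_t$ and $v$ precedes $p$. Hence $v\in\CC(0,A_t)$.

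\emph{Right inclusion.} Take $v\in\CC(0,A_t)$, on a segment $[0,a]$ with $a\in A_t\subseteq P_{(1-\eps)t}$ whose relative interior avoids $A_t$. If $v\in P_{(1-\eps)t}$, we must show $v\notin P_{(1+2\eps)t}$. This holds because $P_{(1+2\eps)t}\subseteq P_{(1+\eps')t}\subseteq A_t$ locally, while $v$ lies in the relative interior (or is $a$ itself). The endpoint $a$ is harmless since it lies on $\partial A_t$: a point of $P_{(1+2\eps)t}$ is interior to $P_{(1+\eps)t}\subseteq A_t$, so it cannot be a boundary point of $A_t$.

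If instead $v\notin P_{(1-\eps)t}$, we must put $v$ in $\CC(0,P_{(1-\eps)t})$. Since $a\in P_{(1-\eps)t}$, take the first point of $[0,a]$ in $P_{(1-\eps)t}$. The origin is not in this set because $(1-\eps)t>0$. The point $v$ precedes this first point, because $P_{(1-\eps)t}$ is convex (recall $q\ge0$) and $v\notin P_{(1-\eps)t}$. Hence $v\in\CC(0,P_{(1-\eps)t})$.

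\emph{Main obstacle.} The delicate step is justifying the uniform localization. We need the $o(\cdot)$ term in Lemma \ref{taylorlemma} to be genuinely $o(t)$ uniformly on $B_{C\sqrt t}$, and all relevant points, including the endpoints $a$ and segments to $P_{(1+\eps)t}$, to stay inside that ball. The cone bound $\mu(s)$ together with Lemma \ref{smallballlemma} handles this. One must also check that the boundary-versus-interior distinctions for the closed sets $P_s$ and $A_t$ do not break the strict gaps between the parameters $1-\eps$, $1+\eps$ and $1+2\eps$.
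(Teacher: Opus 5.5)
Your proposal is correct and is essentially the paper's proof. You localize to $B_{C\sqrt t}(0)$ via Lemma~\ref{smallballlemma} and the bound $\CC(0,P_s)\subseteq B_{\mu(s)}(0)$, identify $G_t^{-1}(\CC(x_t,K))$ with a Euclidean cone via Lemma~\ref{eucconelemma}, and compare using the uniform $o(t)$ Taylor bound and convexity of $v^n+q(v)$ along rays. The only difference is packaging: you state the Taylor step as the set sandwich $P_{(1+\eps')t}\cap B_{C\sqrt t}(0)\subseteq G_t^{-1}(K)\cap B_{C\sqrt t}(0)\subseteq P_{(1-\eps')t}$, whereas the paper argues pointwise through \eqref{inconeq}.

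One sentence needs repair. In the left inclusion, ``$v\notin A_t$ and $v$ precedes $p$'' does not yield $v\in[0,p']$, since a priori the segment could enter and leave $A_t$ before reaching $v$. What you need is $[0,v]\cap A_t=\varnothing$. This holds because any $w\in[0,v]\cap A_t$ would lie in the convex set $P_{(1-\eps)t}$ together with $p$, forcing $v\in[w,p]\subseteq P_{(1-\eps)t}$, a contradiction. Alternatively, apply \eqref{eq:Kcondition} to the radial geodesic from $G_t(p')$ to $G_t(p)$, which gives $[p',p]\subseteq A_t$.
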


    \begin{proof}
        By Lemma~\ref{smallballlemma}, there exists a constant $C_{x_0} > 0$ such that $\ell(t) \le C_{x_0}\sqrt t$ for all sufficiently small $t > 0$. Write
        \[
            \mathcal{B}_t : = B_{C_{x_0}\sqrt t}(0) \subseteq \RR^n.
        \]
        By the definition of $\ell(t)$ and $G_t$,
        \begin{equation}\label{CxtKinGtbteq}
           \CC(x_t,K) \subseteq B_{\ell(t)}(x_t) \subseteq G_t(\mathcal{B}_t) = B_{C_{x_0}\sqrt t}(x_t), \qquad 0 < t < R.
        \end{equation}
        Hence, by Lemma \ref{eucconelemma},
        \begin{align*}
            \exp_{x_t}^{-1}(\CC(x_t,K)) & = \CC(0,\exp_{x_t}^{-1}(K \cap B_{\ell(t)}(x_t)))\\
            & = \CC(0,\exp_{x_t}^{-1}(K) \cap \mathcal{B}_t).
        \end{align*}
        Since $G_t = \exp_{x_t}\circ \, L_t$ and the operation $\CC(0,\cdot)$ commutes with linear maps, it follows that
        \[
            G_t^{-1}(\CC(x_t,K)) = \CC(0,G_t^{-1}(K) \cap \mathcal{B}_t).
        \]
        By the definition of $\CC(0,\,\cdot\,)$, this means that
        \begin{equation}\label{inconeq}
            v \in G_t^{-1}\left(\CC(x_t,K)\right)\setminus\{0\}
            \quad
            \iff
            \quad
            \begin{array}{c}
                \forall \,\,0 < s < 1 :\,\, r(G_t(sv)) > 0 \quad \text{ and }\vspace{.1cm}\\
                \exists \,\, s_0 \ge 1 :\, \, r(G_t(s_0v)) \le 0.
            \end{array}
        \end{equation}

        \medskip
        Fix $\eps > 0$. By Lemma \ref{taylorlemma} and Lemma \ref{smallballlemma}, there exists $t_0 = t_0(x_0,\eps) >0$ such that if $0<t<t_0$ then for every $v \in \mathcal{B}_t$,
        \begin{equation}\label{rGtsvtayeq}
            (1-\eps)t + sv^n + s^2\cdot q(v) \le r(G_t(sv)) \le (1+\eps)t + sv^n + s^2\cdot q(v)\qquad \text{ for all $0 < s < \frac{{C_{x_0}\sqrt t}}{|v|}$}.
        \end{equation}

        Write
        \[
            f(v) : = v^n + q(v), \qquad v \in \RR^n.
        \]
        By \eqref{inconeq} and \eqref{rGtsvtayeq},

        \begin{align}
            G_t^{-1}(\CC(x_t,K)) \setminus\{0\}
            &\subseteq
            \left\{
            v \in \mathcal{B}_t \quad \middle| \quad 
            \begin{array}{c}
                \forall \, \, 0 < s < 1: f(sv) > -(1+\eps)t,\\
                \exists s_0 \ge 1: f(s_0v) \le -(1-\eps)t
            \end{array}\nonumber
            \right\}\\
            &\subseteq
            \left\{
            v \in \mathcal{B}_t \quad \middle| \quad 
            \begin{array}{c}
                f(v) \ge -(1+\eps)t,\\
                \exists s_0 \ge 1: f(s_0v) \le -(1-\eps)t
            \end{array}
            \right\}\label{Gtsubseteq}
        \end{align}

        and 

        \begin{align}
            G_t^{-1}(\CC(x_t,K)) 
            &\supseteq
            \left\{
            v \in \mathcal{B}_t \quad \middle| \quad 
            \begin{array}{c}
                \forall \, \, 0 < s < 1: f(sv) > -(1-\eps)t,\\
                \exists s_0 \ge 1: f(s_0v) \le -(1+\eps)t
            \end{array}
            \right\}\nonumber
            \\
            &\supseteq
            \left\{
            v \in \mathcal{B}_t \quad \middle| \quad 
            \begin{array}{c}
                f(v) \ge -(1-\eps)t,\\
                \exists s_0 \ge 1: f(s_0v) \le -(1+\eps)t
            \end{array}
            \right\},\label{Gtsupseteq}
        \end{align}
        where the second inclusion holds since the function $s \mapsto f(sv)$ is convex and vanishes at $0$. Indeed, for $v$ in the lower set we have $f(s_0 v) \le -(1+\eps)t < -(1-\eps)t \le f(v)$ for some $s_0 \ge 1$ whence $s \mapsto f(sv)$ is strictly decreasing on $[0,1]$ and in particular $f(sv) > f(v) \ge -(1-\eps)t$ for all $0 < s < 1$.

        \medskip
        To prove the first inclusion in \eqref{eq:sandwich}, we decrease $t_0$ if necessary so that $\CC(0,P_{(1+\eps)t}) \subseteq \mathcal{B}_t$ for all $0<t<t_0$. Let $v \in \CC(0,P_{(1+\eps)t}) \setminus P_{(1-\eps)t}$. By the definition of $\CC(0,P_{(1+\eps)t})$, there exists $s_0 \ge 1$ such that $f(s_0 v) \le - (1+\eps)t$, while $f(v) > -(1-\eps)t$ since $v \notin  P_{(1-\eps)t}$. Hence by \eqref{Gtsupseteq}, $v \in G_t^{-1}(\CC(x_t,K))$. This proves the first inclusion.

        \medskip
        For the second inclusion in \eqref{eq:sandwich}, let $v \in G_t^{-1}(\CC(x_t,K))$; since $0 \in \CC\left(0,P_{(1-\eps)t}\right)$, we may assume that $v \ne 0$. By the definition of $\ell(t)$, 
        \[
            v \in B_{\ell(t)/2} \subseteq \mathcal{B}_t.
        \]
        In order to prove that $v \in \CC\left(0,P_{(1-\eps)t}\right) \cup \left(P_{(1-\eps)t}\setminus P_{(1+2\eps)t}\right)$, it suffices to show that
        \[ 
            \text{if $v \notin P_{(1-\eps)t}\setminus P_{(1+2\eps)t}$ then $v \in \CC(0,P_{(1-\eps)t})$.}
        \]
        The assumption $v \notin P_{(1-\eps)t}\setminus P_{(1+2\eps)t}$ means that either $f(v) > -(1-\eps)t$ or $f(v) \le -(1+2\eps)t$, but the latter is impossible by \eqref{Gtsubseteq} since $v \in G_t^{-1}(\CC(x_t,K))$. Thus $f(v) > -(1-\eps)t$ and therefore $v \notin P_{(1-\eps)t}$, while by \eqref{Gtsubseteq} there exists $s_0 > 1$ such that $f(s_0v) \le -(1-\eps)t$, i.e. $s_0v \in P_{(1-\eps)t}$. It follows that $v \in \CC(0,P_{(1-\eps)t})$.
    \end{proof}

    We can now prove Proposition \ref{volumeprop} in the case $\kappa(x_0)  > 0$:
    \begin{lemma}\label{nondegeneratelemma}
        Suppose that $\kappa(x_0) > 0$. Then
        \begin{equation}\label{VolGtminusoneeq}
            \Vol(\CC(x_t,K)) = \left(\frac{t}{\beta_n\cdot \kappa(x_0)^{\frac{1}{n+1}}}\right)^{\frac{n+1}{2}}(1 + o(1)) \qquad \text{as $t\searrow 0$}.
        \end{equation}
    \end{lemma}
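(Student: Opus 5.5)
Combining the sandwich of Lemma \ref{sandwichlemma} with the volume comparison of Lemma \ref{Gtlemma} reduces the statement to a Euclidean computation for the model sets $\CC(0,P_s)\setminus P_s$.

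\textbf{Step 1: transfer volumes.} Since $\CC(x_t,K)\subseteq B_{\ell(t)/2}(x_t)$ and $\ell(t)=O_{x_0}(t^{1/2})$ by Lemma \ref{smallballlemma}, Lemma \ref{Gtlemma} applies with $\ell=C_{x_0}\sqrt t$ and gives
\[
\Vol(\CC(x_t,K))=\Vol_{\RR^n}\bigl(G_t^{-1}(\CC(x_t,K))\bigr)+O_{x_0}\bigl(t^{\frac{n+2}{2}}\bigr).
\]
The error is $o(t^{\frac{n+1}{2}})$, so it suffices to prove the claimed asymptotics for the Euclidean volume of $G_t^{-1}(\CC(x_t,K))$. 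Note that the set $\CC(0,P_s)$ contains the points of $P_s$ it meets, so the relevant model quantity is
\[
F(s):=\Vol_{\RR^n}\bigl(\CC(0,P_s)\setminus P_s\bigr).
\]

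\textbf{Step 2: the model computation.} Write $\kappa=\kappa(x_0)$. The boundary $\partial P_s$ is the graph $v^n=-s-q(v^\perp)$. The set $\CC(0,P_s)\setminus P_s$ is the region between this paraboloid and its cone of visibility from the origin.

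The tangency condition for a ray from the origin to $\partial P_s$ is $q(v^\perp)=s$. This is an ellipsoid in $e_n^\perp$ at height $v^n=-2s$, with $(n-1)$-volume $\vartheta_{n-1}(2s)^{\frac{n-1}{2}}\kappa^{-1/2}$.

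The cone over this ellipsoid with apex $0$ has volume $\frac1n\cdot 2s\cdot\vartheta_{n-1}(2s)^{\frac{n-1}{2}}\kappa^{-1/2}$. From it we subtract the paraboloid cap $\{-2s\le v^n\le -s-q(v^\perp)\}$. Its volume is
\[
\int_0^s \vartheta_{n-1}(2h)^{\frac{n-1}{2}}\kappa^{-1/2}\,dh=\vartheta_{n-1}2^{\frac{n-1}{2}}\kappa^{-1/2}\frac{2}{n+1}s^{\frac{n+1}{2}}.
\]
This gives $F(s)=c_n\kappa^{-1/2}s^{\frac{n+1}{2}}$ with
\[
c_n=\vartheta_{n-1}2^{\frac{n+1}{2}}\Bigl(\frac1n-\frac1{n+1}\Bigr)=\frac{2^{\frac{n+1}{2}}\vartheta_{n-1}}{n(n+1)}.
\]
Checking against the definition of $\beta_n$, we have $c_n=\beta_n^{-\frac{n+1}{2}}$. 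Hence $F(t)=\bigl(t/(\beta_n\kappa^{\frac1{n+1}})\bigr)^{\frac{n+1}{2}}$, which is exactly the main term in \eqref{VolGtminusoneeq}.

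\textbf{Step 3: squeeze.} By the first inclusion of \eqref{eq:sandwich}, the lower bound is at least
\[
\Vol\bigl(\CC(0,P_{(1+\eps)t})\setminus P_{(1-\eps)t}\bigr)\ \ge\ F((1+\eps)t)-\Vol\bigl(\CC(0,P_{(1+\eps)t})\cap(P_{(1-\eps)t}\setminus P_{(1+\eps)t})\bigr).
\]
By the second inclusion of \eqref{eq:sandwich}, the upper bound is at most
\[
F((1-\eps)t)+\Vol\bigl(\mu\text{-ball}\cap(P_{(1-\eps)t}\setminus P_{(1+2\eps)t})\bigr).
\]
Here I use that $\CC(0,P_s)\setminus P_s$ and the relevant piece of $P_s$ lie in $B_{\mu(s)}(0)$. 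The shell error terms are handled by scaling. Under the anisotropic dilation $v^\perp\mapsto\sqrt s\,v^\perp$, $v^n\mapsto s\,v^n$, the set $P_s$ is the image of $P_1$, and volumes scale by $s^{\frac{n+1}{2}}$. After rescaling by $t$, each shell becomes the region between the paraboloids $P_{1-\eps}$ and $P_{1+2\eps}$, intersected with a fixed bounded ball. Its volume is therefore $O(\eps)\,t^{\frac{n+1}{2}}$.

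Dividing by $t^{\frac{n+1}{2}}$ and letting $t\to0$ and then $\eps\to0$ gives the result, using the continuity of $F$ in the scaling factor.

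\textbf{Main obstacle.} The step needing the most care is the Step 3 bookkeeping: checking that the shell regions are confined to a ball of radius $O(\sqrt t)$, so that the scaling argument really gives $O(\eps)t^{\frac{n+1}{2}}$. This follows from $\mu(s)=O(\sqrt s)$ and Lemma \ref{smallballlemma}.
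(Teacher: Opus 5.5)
Your proposal is correct and follows essentially the same route as the paper. Both arguments transfer volumes with Lemma \ref{Gtlemma} using $\ell(t)=O_{x_0}(\sqrt t)$, squeeze with Lemma \ref{sandwichlemma}, and combine the explicit value $\Vol_{\RR^n}(\CC(0,P_s))=\beta_n^{-\frac{n+1}{2}}\kappa(x_0)^{-\frac12}s^{\frac{n+1}{2}}$ with an $O_{x_0}(\eps t^{\frac{n+1}{2}})$ shell estimate; you verify that volume correctly as cone minus paraboloid cap, where the paper leaves it as an exercise.

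One small imprecision: the upper-bound shell is confined by $G_t^{-1}(\CC(x_t,K))\subseteq B_{C_{x_0}\sqrt t}(0)$ from Lemma \ref{smallballlemma}, not by $\mu$. Under your anisotropic rescaling this ball becomes $\{|w^\perp|^2+t|w^n|^2\le C_{x_0}^2\}$, not a fixed ball. The bound survives anyway: the rescaled shell has vertical thickness $3\eps$ over $\{|w^\perp|\le C_{x_0}\}$, so its volume is still $O(\eps)$.
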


    \begin{proof}
        Recall that $P_s : = \{v \in \RR^n : s + v^n + q(v) \le 0\}$, where $q(v) = \tfrac12 \sum_{i=1}^{n-1}\kappa_i(v^i)^2$, and that $\kappa(x_0) = \kappa_1\cdots\kappa_{n-1}$.
        It is an elementary calculus exercise to verify that for every $s \in (0,R)$,
        \[
            \CC(0,P_s)\subseteq B_{\mu(s)}(0), \quad \text{where} \quad \mu(s): = \sqrt{\frac{2s}{\min_i\kappa_i} + 4s^2},
        \]
        and that the Euclidean volume of $\CC(0,P_s)$ is given by
        \begin{equation}\label{VolC0Pseq}
            \Vol_{\RR^n}\left(\CC(0,P_s)\right) = \frac{2^{\frac{n+1}{2}}\vartheta_{n-1}s^{\frac{n+1}{2}}}{n(n+1)\sqrt{\kappa(x_0)}} = \left(\frac{s}{\beta_n\cdot \kappa(x_0)^{\frac{1}{n+1}}}\right)^{\frac{n+1}{2}}.
        \end{equation}
        Let $\eps > 0$. Using the fact that $\ell(t) = O_{x_0}(t^{1/2})$ (Lemma \ref{smallballlemma}), it is also not hard to check that for every $0 < t < R$,
        \[
            \Vol_{\RR^n}\!\left((P_{(1-\eps)t}\setminus P_{(1+\eps)t})\cap B_{C_{x_0}\sqrt t}(0)\right) = O_{x_0}\!\left(\eps t\cdot t^{\frac{n-1}{2}}\right) = O_{x_0}(\eps t^{\frac{n+1}{2}}).
        \]
        Since $\CC(0,P_{(1+\eps)t})\cap P_{(1+2\eps)t} = \varnothing$, we deduce from Lemma \ref{sandwichlemma} that there exists $t_0 = t_0(x,\eps)$ such that the set $G_t^{-1}(\CC(x_t,K))$ contains the set $\CC(0,P_{(1+\eps)t})$ up to a set of measure $O_{x_0}(\eps t^{\frac{n+1}{2}})$, and is contained in the set $\CC(0,P_{(1-\eps)t})$ up to a set of measure $O_{x_0}(\eps t^{\frac{n+1}{2}})$. Thus \eqref{VolGtminusoneeq} follows from \eqref{VolC0Pseq} and Lemma \ref{Gtlemma}.
    \end{proof}
    \subsection{Step II: uniform lower bounds}

    In this section we obtain lower bounds on the volume of $\CC(x,K)$ for all $x \in U\setminus K$. This will allow us to apply the dominated convergence theorem in the next step. We begin by describing a simple construction. 
    \begin{definition}[Right circular cone]\label{circconedef}\normalfont
        Let $x \in M$ and let $b,h > 0$. A set $Y \subseteq M$ is called a \emph{right circular cone with apex at $x$, height $h$ and radius $b$} if there exists $y \in M$ with $d(x,y) = h$ such that $Y$ is the union of all minimizing geodesics joining the point $x$ to a point of the form $z = \exp_y(v)$, where $v$ is a tangent vector at $y$ which is perpendicular to the geodesic joining $x$ to $y$ and which has norm $\le b$. See Figure \ref{conefig}. The set of such points $z$ will be called the \emph{base} of the cone.
    \end{definition}

    \begin{figure}
        \centering
        \begin{tikzpicture}[scale=1.2,>=Stealth]
            \def\Rx{1.8}      
            \def\Ry{0.5}      
            \def\H{3.4}       
            \def\theta{25}    

            \coordinate (Y) at (0,0);      
            \coordinate (X) at (0,\H);     
            \coordinate (R) at (\Rx,0);
            \coordinate (L) at (-\Rx,0);

            \fill[gray!15] (0,0) ellipse [x radius=\Rx, y radius=\Ry];

            \draw[densely dashed, thick, opacity = .25] (\Rx,0) arc[start angle=0,end angle=180,
            x radius=\Rx, y radius=\Ry];
            \draw[thick] (-\Rx,0) arc[start angle=180,end angle=360,
            x radius=\Rx, y radius=\Ry];

            \draw[decorate,decoration={brace,amplitude=7pt}, opacity = .5]
            (L) -- (Y);
            \draw[decorate,decoration={brace,amplitude=7pt}, opacity = .5]
            (Y) -- (X);

            \draw[thick] (X) -- (R);
            \draw[thick] (X) -- (L);

            \draw (X) -- (Y) node[midway,left=6pt] {$h$};

            \path[name path=ray] (Y) -- ++(\theta:4);
            \path[name path=ellipse] (0,0) ellipse [x radius=\Rx, y radius=\Ry];
            \path[name intersections={of=ray and ellipse,by=P}];


            \coordinate (V) at ($(Y)!0.55!(P)$);

            \draw[line width=1.2pt,->] (Y) -- (V);

            \node[below=10pt, left] at (V) {$v$};
            \node[below=1pt, right = 1pt] at (V) {$z$};

            \draw[densely dashed, thick] (X) -- (V);

            \path[name path=leftRay] (Y) -- ++(180:4);
            \path[name intersections={of=leftRay and ellipse,by=PL}];
            \draw (Y) -- (PL)
            node[pos=0.5,above=9pt,inner sep=0pt] {$b$};

            \pic[draw,angle radius=6pt]{right angle=X--Y--L};

            \fill (X) circle (1.5pt);
            \fill (Y) circle (1.5pt);
            \fill (V) circle (1.5pt);

            \node[above] at (X) {$x$};
            \node[below left=.5pt] at (Y) {$y$};

        \end{tikzpicture}
        \caption{A right circular cone.}
        \label{conefig}
    \end{figure}

    \begin{lemma}\label{conevolumelemma}
        There exists $c_1 > 0$ such that for every $h,b \in (0,R)$, a right circular cone with height $h$ and radius $b$ which is contained in the set $U$ has volume $\ge c_1\cdot h\cdot b^{n-1}$.
    \end{lemma}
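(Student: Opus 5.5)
Work in normal coordinates at the apex: use $\exp_x$ and compare the cone with a Euclidean cone in $T_xM$, taking care of the distortion of $\exp_x$ and $\exp_y$ at scale $R$.

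\textbf{Setup.} Let $Y$ be a right circular cone in $U$ with apex $x$, height $h$ and radius $b$, and let $y$ be as in Definition \ref{circconedef}. Since $x\in U$, property (iii) says $\exp_x$ is a diffeomorphism from $B_{4R}(0)$ onto the strongly convex ball $B_{4R}(x)$. Every point of the base has distance at most $h+b<2R$ from $x$, so every minimizing geodesic from $x$ to a base point is the image of a radial segment in $T_xM$. Hence
\[
\exp_x^{-1}(Y)=\{s w : w\in \exp_x^{-1}(\text{base}),\ s\in[0,1]\}
\]
is the Euclidean cone over $\tilde S:=\exp_x^{-1}(\text{base})$. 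The base itself is $\exp_y$ of the $(n-1)$-disc $D$ of radius $b$ in $\dot\sigma^\perp$, where $\sigma$ is the geodesic from $x$ to $y$.

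\textbf{Geometry of the pulled-back base.} By compactness of $\overline U$, the maps $(x,v)\mapsto \exp_x(v)$ and the inverses $\exp_x^{-1}$ are uniformly bi-Lipschitz, with uniform $C^2$ bounds, on the relevant balls. Write $\Psi:=\exp_x^{-1}\circ\exp_y$ restricted to $\dot\sigma^\perp\cap B_b(0)$. Then $\tilde S=\Psi(D)$ is an embedded Lipschitz $(n-1)$-disc, so $\cH^{n-1}(\tilde S)\ge c\,b^{n-1}$. Also $\Psi(0)=\exp_x^{-1}(y)=h u$ with $u$ a unit vector. Using $d\exp_x^{-1}$ at $y$ together with the Gauss lemma, $d\Psi_0$ maps $\dot\sigma^\perp$ onto $u^\perp$, and $d\Psi_0=\mathrm{Id}+O(h^2)$ in parallel-transported frames. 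A second-order bound gives $|\langle \Psi(v)-hu,u\rangle|\le C|v|^2+Ch|v|$. Consequently the radial projection of $\tilde S$ to the sphere of radius $h$ covers a cap of angular radius $\gtrsim \min(b/h,1)$.

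\textbf{Volume estimate.} Write $\exp_x^{-1}(Y)$ in polar coordinates. Its Euclidean volume is at least
\[
\int_{\text{directions } \theta \text{ hitting } \tilde S}\frac{\rho(\theta)^n}{n}\,d\theta ,
\]
where $\rho(\theta)\ge c\,h$ is the radial distance to $\tilde S$ in direction $\theta$; indeed $|\Psi(v)|\ge h-C b$ roughly, and more robustly $|\Psi(v)|\ge c|hu+v|$. The solid angle is at least $c\min(b/h,1)^{n-1}$. When $b\le h$ this gives at least $c\,h^n(b/h)^{n-1}=c\,hb^{n-1}$. When $b>h$, restrict to the sub-cone whose base is the disc of radius $h'=\min(b,c'R)$; this needs $\rho\ge c|v|$ and solid angle $\gtrsim 1$, and an easier route is to use the sub-cone of radius $b$ seen from far away, giving volume $\gtrsim h\,b^{n-1}$ because the slab near the base has thickness $\sim h$. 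Finally, the Jacobian of $\exp_x$ on $B_{4R}(0)$ is bounded below uniformly by compactness, so the Riemannian volume is comparable to the Euclidean one.

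\textbf{Main obstacle.} The hard part is uniformity of the lower bound when $b\gg h$. There the Euclidean cone over the curved surface $\tilde S$ could in principle be thin, since $\tilde S$ bends at order $b^2$, which may exceed $h$. First I would treat the subregion of $\tilde S$ where $|v|\le \sqrt{h}$-scale, where the surface is controlled. If that fails, I would compute directly: the cone volume equals $\tfrac1n\int_{\tilde S}\langle w,N(w)\rangle\,d\cH^{n-1}(w)$, and I would show $\langle w,N\rangle\ge c\,h$ on $\tilde S$ using the Gauss lemma, since $N\approx$ the radial direction at $y$ rotated by $O(b)$ and $w\approx hu+v$. For this I would shrink $R$ if needed so that all error terms are at most $h/2$.
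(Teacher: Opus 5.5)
There is a genuine gap in the regime $b^2\gg h$, which the lemma allows because $h,b\in(0,R)$ vary independently. It is the obstacle you flag yourself, and your remedies do not close it. Your argument can be made to work when $b^2\le\eps h$ with $\eps$ small.

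\textbf{Where it fails.} Your control of the height of $\tilde S$ above $u^\perp$ is purely \emph{additive}: $|\langle\Psi(v)-hu,u\rangle|\le C|v|^2+Ch|v|$, plus a tilt of the normal by $O(b)$. When $b^2\gg h$ these errors swamp $h$; they do not even exclude $\langle w,u\rangle\le 0$ on $\tilde S$. So none of the conclusions you need follows:
\begin{itemize}
\item the covering of a cap by the radial projection;
\item the ``slab of thickness $\sim h$'' in the case $b>h$;
\item the bound $\langle w,N\rangle\ge ch$, since $\langle v,N\rangle$ with $|v|\le b$ and a tilt of order $b$ is only $O(b^2)$.
\end{itemize}
Your proposed remedies also fail. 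Restricting to $|v|\lesssim\sqrt h$ yields a cone of volume $\sim h^{(n+1)/2}$, far below $hb^{n-1}$ when $b\gg\sqrt h$. You cannot shrink $R$ until the errors are $\le h/2$: $R$ is fixed in terms of $K$ while $h\in(0,R)$ is arbitrary, so $Cb^2\le h/2$ is a constraint on $(h,b)$, not on $R$.

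\textbf{The missing idea.} The distortion is \emph{relative} to $h$. Parametrize the base by $v\in u^\perp$, $|v|\le b$, via $F(h,v):=\exp_x^{-1}\exp_y(P_hv)$, where $P_h$ is parallel transport along $\sigma$. Then $F(0,v)=v$, because for $y=x$ the base is $\exp_x$ of a flat disc in $u^\perp$. Also $F(h,0)=hu$. So $E:=F-hu-v$ satisfies $\partial_vE(0,\cdot)\equiv0$ and $E(h,0)=0$. Uniform $C^2$ bounds then give $|\partial_vE(h,v)|\le Ch$, and hence $|E(h,v)|\le Ch|v|$. Consequently:
\begin{itemize}
\item $\tilde S$ lies in the slab $h(1-CR)\le\langle w,u\rangle\le h(1+CR)$;
\item its normal is tilted by $O(h)$ rather than $O(b)$, so $\langle w,N\rangle=h(1+O(R))$ and your support-function formula gives the bound;
\item its radial projection to $\{\langle w,u\rangle=h\}$ is $v\mapsto hu+v+O(R|v|)$, which by a degree argument covers $hu+(u^\perp\cap B_{b/2}(0))$.
\end{itemize}
Thus $\exp_x^{-1}(Y)$ contains a Euclidean cone of height $h/2$ and radius $b/4$, and Lemma \ref{normallemma} finishes the proof.

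\textbf{Comparison with the paper.} The paper exhibits an explicit Euclidean cone $Y_0$ (half-angle $\arctan(cb/h)$, height $c'h$). It then checks $\exp_x(Y_0)\subseteq Y$ ray by ray using the right-triangle estimates of Lemma \ref{righttrianglelemma}. The operative point is the same as above: those estimates are multiplicative, $d(x,y)=d(x,z)\cos\vphi\,(1+O(l^2))$. When $h\ll b$, it is exactly this multiplicative form, not an additive $O(l^3)$ error, that must be justified, and the two-variable vanishing argument above is a clean way to do it.

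In the only application, Lemma \ref{coneinclusionlemma}, one has $b^2=2c_2^2\rho\,h\le 2c_2^2R\,h$. So your additive bounds would essentially suffice there, though not for the lemma as stated.
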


    \begin{proof}
        Let $Y$ be a right circular cone contained in the set $U$, with $x,y,b,h$ as in Definition \ref{circconedef}. We claim that there exist constants $c,c'>0$ such that $Y \supseteq \exp_x(Y_0)$, where 
        \begin{equation}\label{eq:Y0def}
            Y_0 : = \left\{w \in T_xM \quad \middle| \quad \theta(w) \le \arctan(cb/h), \quad |w|\cos(\theta(w)) \le c'h \right\},
        \end{equation}
        and where $\theta(w) \in [0,\pi]$ denotes the angle between $w$ and the geodesic joining $x$ to $y$. Since $Y_0$ is a Euclidean circular cone with apex at the origin, height $c'h$ and radius $c'cb$, its Euclidean volume is $c'' \cdot h \cdot b^{n-1}$ for some constant $c'' > 0$, so Lemma \ref{normallemma} would then imply that $\Vol(Y) \ge c_1\cdot h\cdot b^{n-1}$ as desired, for an appropriate choice of $c_1$.

        \medskip
        Define $Y_0$ by \eqref{eq:Y0def}, let $w \in Y_0$ and let $\theta : = \theta(w)$. Our goal is to show that $\exp_x(w) \in Y$. By the definition of $Y$, this amounts to proving that for some $s \ge 1$, the point $\exp_x(sw)$ lies in the base of the cone $Y$.
        
        \medskip
        Let $z' : = \exp_x(2h\cdot w/|w|)$. By Lemma \ref{lem:smalltriangle}, if the constant $c$ (and therefore the angle $\theta$) is smaller than some constant depending only on the set $K$, then $\angle xyz' > \pi/2$. Thus, by continuity, there exists $s \in (0,2h)$ such that the point $z : = \exp_x(sw/|w|)$ satisfies $\angle xyz = \pi/2$, see Figure \ref{fig:conevolumelemma}. By Lemma \ref{righttrianglelemma}, if $c,c'$ are sufficiently small then
        \[
        h\tan\theta \ge c \cdot d(y,z) \qquad \text{ and } \qquad d(x,z)\cdot \cos\theta \ge c' h.
        \]
        By the definition of $Y_0$,
        \begin{itemize}
            \item[-] $h \tan \theta \le c \cdot b$, and therefore $d(y,z) \le b$, whence the point $z$ belongs to the base of the cone $Y$, and
            \item[-] $|w| \le c'\cdot h / \cos\theta \le d(x,z)$, whence the point $\exp_x(w)$ lies on the geodesic joining $x$ to $z$.
        \end{itemize} 
        It follows that $\exp_x(w) \in Y$.
    \end{proof}

    \begin{figure}
        \centering
        \begin{tikzpicture}[scale=1.2, line cap=round, line join=round]
            \useasboundingbox (-2.5,-2) rectangle (2.5,4.8);

            \coordinate (X)  at (1.5, 4);
            \coordinate (Y)  at (1.5, 1);
            \coordinate (Z)  at (0, 1);
            \coordinate (Zp) at (-1.183, -1.367);
            \coordinate (W)  at (1.05, 3.1);

            \draw[very thick] (X)--(Y);      
            \draw[very thick] (Z)--(Y);      
            \draw[very thick] (X)--(Z);      

            \draw[very thick, dashed] (Z)--(Zp);

            \pic[draw, very thick, angle radius=6pt]{right angle=X--Y--Z};

            \fill (X)  circle (2.2pt);
            \fill (Y)  circle (2.2pt);
            \fill (Z)  circle (2.2pt);
            \fill (Zp) circle (2.2pt);
            \fill (W)  circle (2.2pt);

            \node[above right] at (X)  {$x$};
            \node[below right]       at (Y)  {$y$};
            \node[below right]        at (Z)  {$z$};
            \node[right]        at (Zp) {$z'$};
            \node (Wlabel) at (-0.1, 3.75) {$\exp_x(w)$};
            \draw[dotted, thin, shorten <=2.5pt] (W) -- (Wlabel);

            \draw[decorate, decoration={brace, amplitude=10pt, mirror}]
                (X)--(Zp);
            \path (X)--(Zp) node[pos=0.5, above left=10pt] {$2h$};

            \draw[decorate, decoration={brace, amplitude=10pt}]
                (X)--(Y);
            \path (X)--(Y) node[pos=0.5, right=12pt] {$h$};

            \pic[draw, -, "$\theta$", angle eccentricity=0.7, angle radius=.9cm]
                {angle = Z--X--Y};

        \end{tikzpicture}
        \caption{Proof of Lemma \ref{conevolumelemma}.}
        \label{fig:conevolumelemma}
    \end{figure}

    For $x \in \Sigma = \partial K$, let
    \begin{equation}\label{eq:rhodef}
        \rho(x) 
        : =
        \sup
        \left\{
        \rho \in [0,R] 
        \quad \middle| \quad
        \begin{array}{c}
            \text{there exists a closed ball of radius $\rho$ contained}\\
            \text{in $K$ and containing $x$ on its boundary}
        \end{array}
        \right\}.
    \end{equation}
    A singleton is considered a closed ball of radius zero.

    \begin{lemma}\label{rhopositivelemma}
        Let $x \in \Sigma$. If $\rho(x) > 0$ then $x \in \Sigma'$.
    \end{lemma}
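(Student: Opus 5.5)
The plan is to show that if an interior ball touches $\Sigma$ at $x$, then every outer normal at $x$ must be the outward direction of that ball. So suppose $\rho(x) > 0$. Fix $0<\rho<\min\{\rho(x),R\}$ and a closed ball $\bar B_\rho(p)\subseteq K$ with $x \in \partial B_\rho(p)$, so $d(p,x)=\rho$. Since $\rho<R$, property (iii) says $\exp_x$ is a diffeomorphism on $B_{4R}(0)$, so $p = \exp_x(-\rho u_0)$ for a unique unit vector $u_0\in S_xM$. The claim is that $N_x=\{u_0\}$. By definition $N_x$ consists of unit vectors making a nonpositive inner product with every vector of $T_xK$, and Lemma \ref{tangentconelemma} says $N_x$ is nonempty. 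It therefore suffices to show that $T_xK$ contains the open half-space $\{v\in T_xM : \langle v,u_0\rangle<0\}$. Indeed, a unit $u$ with $\langle u,v\rangle\le 0$ for all $v$ in this half-space lies in the polar cone of the half-space, which is the ray $\mathbb{R}_{\ge 0}u_0$, so $u=u_0$.

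To prove the inclusion, take $v\in T_xM$ with $\langle v,u_0\rangle<0$. We need $\exp_x(sv)\in \operatorname{int}B_\rho(p)\subseteq \operatorname{int}K$ for all small $s>0$. Put $\phi(s) := d(p,\exp_x(sv))$. For small $\rho$, the point $x$ lies inside the injectivity radius about $p$, so $d(p,\cdot)$ is smooth near $x$. By the first variation formula its gradient at $x$ is the unit vector $-\dot\sigma$, where $\sigma$ is the geodesic from $x$ to $p$; since $\sigma(s) = \exp_x(-s u_0)$, this gradient is $u_0$. Hence $\phi(0)=\rho$ and $\phi'(0)=\langle v,u_0\rangle<0$, so $\phi(s)<\rho$ for all small $s>0$, which places $\exp_x(sv)$ in the open ball $B_\rho(p)$. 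The open ball is contained in $\operatorname{int}K$ because $B_\rho(p)\subseteq \bar B_\rho(p)\subseteq K$ and $B_\rho(p)$ is open. Thus $v\in T_xK$.

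I expect the main obstacle to be the smoothness of $d(p,\cdot)$ at $x$ and the identification of its gradient. Both follow from property (iii) applied at the point $p$ (or at $x$): $\exp_p$ is a diffeomorphism on $B_{4R}(0)$, and $\rho<R$ places $x$ well inside that domain, so the distance to $p$ is smooth away from $p$ with gradient equal to the radial unit field. The rest is the elementary polar-cone argument above. One can instead combine this with \eqref{gradienteq} if one wants to record $\nu\vert_x=u_0$ explicitly.
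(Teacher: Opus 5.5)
Your proof is correct and follows the paper's argument: the tangent cone $T_xK$ contains the open half-space $\{v:\langle v,u_0\rangle<0\}$, which is the tangent cone of the inscribed ball at $x$, and therefore the normal at $x$ is forced to be $u_0$. You verify this inclusion explicitly with the first variation formula and finish through the polar cone, where the paper simply asserts that $T_xK$ is a half-space. One small point: nonemptiness of $N_x$ is not literally what Lemma \ref{tangentconelemma} states, but the paper relies on the same fact when it asserts that $T_xK$ equals a half-space.
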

    \begin{proof}
        If $\rho(x) > 0$ then the tangent cone $T_xK$ contains the tangent cone of some ball at $x$, which is a half-space. Hence $T_xK$ is a half-space and $x \in \Sigma'$.
    \end{proof}

    \begin{lemma}\label{touchingballlemma}
        Let $x \in \Sigma'$ and let $\rho \in (0,R]$. If $B_\rho(p) \subseteq K$ is a closed ball whose boundary contains $x$, then $p = \gamma_x(-\rho)$.
    \end{lemma}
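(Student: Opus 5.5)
Since $B_\rho(p)\subseteq K$ is a closed ball with $x$ on its boundary, we have $d(p,x)=\rho$. Because $\rho\le R$, property (iii) tells us that $\exp_p$ is a diffeomorphism on $B_{4R}(0)$ and that $B_{4R}(p)$ is strongly convex. Hence there is a unique unit-speed minimizing geodesic $\sigma:[0,\rho]\to M$ from $p$ to $x$. Write $u:=\dot\sigma(\rho)\in S_xM$ for its velocity at $x$. The plan is to show that $u$ is an outer normal to $K$ at $x$, i.e. $u\in N_x$. Since $x\in\Sigma'$, this forces $u=\nu\vert_x$. Then $\sigma(s)=\exp_x((s-\rho)\nu\vert_x)=\gamma_x(s-\rho)$, and taking $s=0$ gives $p=\gamma_x(-\rho)$.

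To show $u\in N_x$, take any $v\in T_xK$; we must prove $\langle u,v\rangle\le 0$. Suppose instead that $\langle u,v\rangle>0$. The idea is that a direction making an acute angle with the outward radial direction $u$ of the ball must leave the ball, and we need it to leave $K$ as well. Here is how to get that contradiction.

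First, for small $s>0$ the point $\exp_x(sv)$ lies in $\mathrm{int}K$, by the definition of $T_xK$. Second, by the first variation formula, $d(p,\exp_x(sv))=\rho+s\langle u,v\rangle+o(s)>\rho$. So these points lie in $K$ but outside the ball, which is not yet a contradiction. The contradiction should instead come from the normal directions. By \cite{Wal74}, for any $u'\in N_x$ we have $r(\exp_x(su'))=s$ for small $s$, and the closest-point projection of $\exp_x(su')$ onto $K$ is $x$. Choose $u'\in N_x$; it exists because $N_x\ne\varnothing$, as $T_xK$ is an open convex cone other than the whole tangent space, $x$ being a boundary point. If $u'\ne u$, then the geodesic from $p$ to $x$ extended by the broken path along $u'$ has a corner at $x$. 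Hence $d(p,\exp_x(su'))<\rho+s$ for small $s$ (cutting the corner strictly shortens it). On the other hand, $B_\rho(p)\subseteq K$ together with the triangle inequality gives
\[
d(\exp_x(su'),K)\le d(\exp_x(su'),p)-\rho<s.
\]
This contradicts $r(\exp_x(su'))=s$. Hence $u'=u$, and so $u$ is the unique element of $N_x$, namely $\nu\vert_x$. This route avoids dealing with $v$ directly and finishes the proof.

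The main step to verify carefully is the strict inequality $d(p,\exp_x(su'))<\rho+s$ for $u'\ne u$. I would prove it with the first variation formula along $\sigma$: $\frac{d}{ds}\big|_{s=0^+}d(p,\exp_x(su'))=\langle u,u'\rangle<1$ whenever $u'\ne u$ and both are unit vectors. This is valid because $x$ is not conjugate to $p$ and not in the cut locus of $p$, since $d(p,x)=\rho\le R<4R$ and by property (iii). Hence, for small $s>0$, $d(p,\exp_x(su'))\le\rho+s\langle u,u'\rangle+o(s)<\rho+s$, as needed.
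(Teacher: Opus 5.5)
Your proof is correct, but it reaches the key step by a different route from the paper.

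The paper's proof is a one-line assertion that the outward unit normal of $B_\rho(p)$ at $x$ coincides with $\nu\vert_x$. Implicitly, as in Lemma \ref{rhopositivelemma}, this is a polar-cone argument. The tangent cone of the ball at $x$ is the open half-space $\{v : \langle v,u\rangle<0\}$, and it lies inside $T_xK$ because the open ball lies in $\mathrm{int}K$. Hence $N_x$ is contained in the polar of that half-space, which is $\{u\}$.

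You prove the same inclusion $N_x\subseteq\{u\}$ metrically instead. For $u'\in N_x$ you invoke Walter's fact $r(\exp_x(su'))=s$ from \cite{Wal74}. You then show, by first variation of $d(p,\cdot)$ at $x$ and the triangle inequality through $B_\rho(p)$, that $d(\exp_x(su'),K)<s$ whenever $u'\neq u$.

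What each route buys:
\begin{itemize}
\item The cone argument needs only the definition of $N_x$ and a first-variation computation to identify the tangent cone of the ball.
\item Your argument leans on the heavier metric-projection result of Walter, which depends on the choice of $R$.
\item In exchange, yours makes fully explicit the step the paper leaves to the reader.
\end{itemize}

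Some small clean-ups:
\begin{itemize}
\item Nonemptiness of $N_x$ is part of the hypothesis $x\in\Sigma'$, so your separation argument for it is unnecessary. You can simply take $u'=\nu\vert_x$.
\item The inequality $d(q,K)\le d(q,p)-\rho$ requires $d(q,p)\ge\rho$. The other case, $q\in B_\rho(p)\subseteq K$, contradicts $r(q)=s>0$ even more directly.
\item The abandoned attempt to bound $\langle u,v\rangle$ for $v\in T_xK$ can be deleted. The inclusion that is easy to prove runs the other way: the tangent cone of the ball is contained in $T_xK$.
\end{itemize}
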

    \begin{proof}
        Since $B_\rho(p) \subseteq K$ and $x \in \partial B_\rho(p) \cap \partial K$, the outward unit normal to $B_\rho(p)$ at $x$ coincides with the outward unit normal $\nu\vert_x$ to $K$ at $x$, which is well-defined because $x \in \Sigma'$. Therefore $p = \exp_x(-\rho\,\nu\vert_x) = \gamma_x(-\rho)$.
    \end{proof}

    Recall that $\Pi : U \to K$ is the nearest-point projection. We will now show that a lower bound on $\rho(\Pi x)$ can be used to bound $\Vol(\CC(x,K))$ from below by finding a right circular cone contained in $\CC(x,K)$.

    \begin{lemma}\label{coneinclusionlemma}
        There exists $c_2 > 0$ such that for every $x\in {U} \setminus K$ with $r(x) \le \rho(\Pi x)$, the set $\CC(x,K)$ contains a right circular cone with apex at $x$, height $h(x)$ and radius $b(x)$, where
        \begin{equation}\label{hxbxeq}
            h(x) := \tfrac12\cdot r(x) \qquad \text{ and } \qquad b(x) :=  c_2\cdot \sqrt{r(x)\cdot \rho(\Pi x)}.
        \end{equation}
    \end{lemma}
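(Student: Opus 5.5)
We plan to build the cone explicitly from the inscribed ball at $\Pi x$. Write $x_0 := \Pi x$, $t := r(x)$, $\rho := \rho(x_0)$; since $t \le \rho$ and $t>0$ we have $\rho > 0$, so $x_0 \in \Sigma'$ by Lemma \ref{rhopositivelemma}, and $x = \gamma_{x_0}(t)$. Replacing $\rho$ by a slightly smaller radius (and absorbing the loss into $c_2$) we take a closed ball $B_\rho(p)\subseteq K$ with $x_0$ on its boundary; by Lemma \ref{touchingballlemma}, $p = \gamma_{x_0}(-\rho)$. Everything then reduces to a statement about the ball $B_\rho(p)$: the cone will be contained in $\CC(x,B_\rho(p))$-type region, and we check it lies in $\CC(x,K)$.

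The cone: let $y := \gamma_{x_0}(t/2)$, the midpoint between $x$ and $x_0$, so $d(x,y) = t/2 = h(x)$, and take the base to be $\{\exp_y(v) : v\perp \dot\gamma_{x_0}(t/2),\ |v|\le b\}$ with $b = c_2\sqrt{t\rho}$. Step one is to show that each minimizing geodesic from $x$ through a base point $z$, when continued, hits $B_\rho(p)\subseteq K$ at distance $O(t)$ from $x$ and at angle bounded away from tangency; this uses a comparison argument in normal coordinates at $x$ (or at $x_0$): in the chart $G$ of Section~\ref{proofsec} around $x_0$, the ball $B_\rho(p)$ contains, up to curvature errors controlled by Lemma \ref{normallemma} and smallness of $R$, the Euclidean region $\{v^n \le -|v^\perp|^2/(c\rho)\}$, and the geodesics from $x$ are straight lines up to $O(\ell^2)$ errors. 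A line from $(0,t)$ through a point at height $t/2$ and horizontal distance $b\le c_2\sqrt{t\rho}$ reaches height $0$ at horizontal distance $2b$, where the paraboloid lies at depth $\le 4c_2^2 t/c$, so for $c_2$ small it enters the paraboloid at depth $\approx -t/2$ within distance $\lesssim\sqrt{t\rho}$; hence the geodesic reaches $K$.

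Step two: the segment from $x$ to $z$ must avoid $K$ in its relative interior, otherwise the geodesic's first hitting point of $K$ comes earlier and the segment $[x,z]$ would not be fully in $\CC(x,K)$. Here we use that $r(x)=t$ and $r$ is $1$-Lipschitz: points of the segment near $x$ are out of $K$, and more precisely by the $C^{1,1}$ structure and convexity (the supporting half-space at $x_0$, Lemma \ref{halfspacelemma}-style argument but uniform via property (ii)/(\star)) $K$ lies below the hypersurface $\{v^n \le O(|v|^2)\}$ relative to the normal at $x_0$; points of the segment have height $\ge t/2 - O(t\rho)$, positive for $R$ small. Then the minimizing geodesic from $x$ to its first hitting point in $K$ passes through $z$, so the whole geodesic piece $[x,z]$ lies in $\CC(x,K)$, and the union is a right circular cone (after reparametrizing, minimizing segments from $x$ to $z$ are the sub-segments, unique by strong convexity (iii)).

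The main obstacle is making all constants uniform in $x_0\in\Sigma$ (not depending on second-order data at $x_0$, unlike Step I): we must use only the inscribed ball and a uniform supporting-half-space bound, with curvature errors of order $\ell^2$ where $\ell\lesssim\sqrt{t\rho}+t\le C\sqrt{R}\sqrt t$. I would first try to handle these errors with Lemma \ref{lem:smalltriangle} and Lemma \ref{righttrianglelemma} from the appendix, which already provide uniform right-triangle comparisons in balls of radius $4R$.
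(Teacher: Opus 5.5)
Your cone is the paper's (apex $x$, centre $y=\gamma_{x_0}(t/2)$, radius $b=c_2\sqrt{t\rho}$), your two requirements are the paper's (I) and (II), and your Step two is essentially the paper's argument. In normal coordinates at $x_0$ one even has $K\cap B_R(x_0)\subseteq\{v^n\le 0\}$ exactly, since radial geodesics are straight lines and $(\star)$ holds.

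The gap is in Step one, in the claim that $B_\rho(p)$ contains the region $\{v^n\le -|v^\perp|^2/(c\rho)\}$ ``up to curvature errors controlled by Lemma~\ref{normallemma} and smallness of $R$'', with errors ``of order $\ell^2$, $\ell\lesssim\sqrt{t\rho}+t$''.
\begin{itemize}
\item Any such comparison involves $p$, which is at distance $\rho$ from $x_0$. So Lemma~\ref{normallemma}, or Lemma~\ref{righttrianglelemma} applied to triangles with a vertex at $p$, must be used on a ball of radius $\gtrsim\rho$.
\item There it only gives $d(G_0(v),p)=|v+\rho e_n|\,(1+O(\rho^2))$, an \emph{additive} error of order $\rho^3$ in the distance to $p$.
\item So what you can conclude is that $B_\rho(p)$ contains the paraboloid pushed down by $C\rho^3$. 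The margin of your line, however, is only of order $t$.
\item If $t\ll\rho^3$, an elementary computation shows the line $s\mapsto t-st/(2b)$ never enters $\{v^n\le -C\rho^3-|v^\perp|^2/(c\rho)\}$, so Step one proves nothing.
\end{itemize}
Shrinking $R$ does not help, since $t/\rho^3$ is unconstrained. This is exactly the regime that matters later: in Lemma~\ref{lem:lowerbound}, $\rho(x_0)$ is fixed and $t\to 0$.

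The paper handles this with a case split.
\begin{itemize}
\item For $\rho^3\le t\le\rho$ it argues essentially as you do, using the right triangles $\triangle xyz$ and $\triangle xz'p$ and Lemma~\ref{righttrianglelemma}. The gain from the cone geometry beats the multiplicative error $1+C\rho^2$ in $d(z',p)$ precisely because $t/\rho\ge\rho^2$.
\item For $t\le\rho^3$ it uses an estimate whose error is relative to the local scale. The first variation formula and the Hessian comparison of Lemma~\ref{hessianlemma} give $d(\gamma(s),p)=t+\rho-s\cos\theta+O(s^2/\rho)$. At $s=\sqrt{t\rho}$ this is at most $\rho+t\,(C-1/(2c_2)+O(t/\rho))<\rho$.
\end{itemize}

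To repair your argument you need second-order input of this kind. Two options:
\begin{itemize}
\item A Taylor expansion of $d(\cdot,p)\circ G_0$ at $0$, controlled by Lemma~\ref{hessianlemma}, gives the paraboloid containment on $B_{c'\rho}(0)$ with no downward shift.
\item More simply, work in normal coordinates centred at $p$. There $B_\rho(p)$ is exactly the Euclidean ball of radius $\rho$, $x=(\rho+t)e_n$, and the base is Euclidean-orthogonal to $e_n$ by the Gauss lemma. By Lemma~\ref{lem:normal-geodesic-estimate}, geodesics from $x$ deviate from straight lines by $O(t\rho)$, negligible against the margin $\asymp t$. This even avoids the case split.
\end{itemize}
Minor point: the continued geodesic reaches $K$ at distance $O(\sqrt{t\rho})$ from $x$, not $O(t)$.
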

    \begin{proof}
        Let $x \in U \setminus K$ satisfy $r(x) \le \rho(\Pi x)$. Write
        \[
            x_0 : = \Pi x, \qquad t : = r(x) \qquad \text{ and } \qquad \rho : = \rho(\Pi x).
        \]
        Let $Y$ denote a right circular cone with apex at $x$, height $t/2$ and radius $b = c_2\sqrt{t\rho}$, taking the point $y$ in Definition \ref{circconedef} (namely, the center of the base of the cone) to be the midpoint between $x$ and $x_0$. The constant $c_2$ will be chosen later. In order to show that $Y \subseteq \CC(x,K)$, we must prove that:
        \begin{enumerate}[(I)]
            \item every geodesic $\gamma$ joining the point $x$ to the base of $Y$ eventually reaches the set $K$, and
            \item the cone $Y$ is disjoint from the interior of $K$.
        \end{enumerate}

        By Lemma~\ref{rhopositivelemma}, $x_0 \in \Sigma'$. By the definition of $\rho$, the set $K$ contains a closed ball of radius $\rho$ whose boundary contains $x_0$, and by Lemma~\ref{touchingballlemma}, its center is $p := \gamma_{x_0}(-\rho)$. Let $\gamma$ be a unit-speed geodesic joining the point $x$ to a point $z$ lying on the base of the cone, see Figure \ref{fig:twotriangles2}.

        \begin{figure}[tbp]
            \centering
            \begin{tikzpicture}[scale=1.4, line cap=round, line join=round]
                \useasboundingbox (-3.5,-1.05) rectangle (2,4);

                \coordinate (X) at (0,4);
                \coordinate (P) at (0,-0.75); 
                \coordinate (W) at (-1.4,-0.3);
                \coordinate (Xzero) at (0,1);
                \coordinate (Y) at (0,2.5);     

                \path[name path=XW] (X)--(W);
                \path[name path=h0] (-3,2.5)--(3,2.5);
                \path[name intersections={of=XW and h0,by=Z}];

                \draw[gray, line width=0.8pt, opacity=0.45]
                (P) ++(0:1.75) arc[start angle=0, end angle=180, radius=1.75];

                \draw[gray, line width=0.8pt, opacity=0.3]
                (0,-9) ++(55:10) arc[start angle=55, end angle=125, radius=10];
                \node[right = 120pt, gray] at (0,0) {$K$};

                \draw[very thick] (X)--(P);        
                \draw[very thick] (W)--(X);        
                \draw[very thick] (W)--(P);        
                \draw[very thick] (Z)--(Y);        

                \pic[draw,very thick,angle radius=6pt]{right angle=X--W--P};
                \pic[draw,very thick,angle radius=6pt]{right angle=Z--Y--X};

                \fill (X) circle (2.2pt);
                \fill (P) circle (2.2pt);
                \fill (W) circle (2.2pt);
                \fill (Xzero) circle (2.2pt);
                \fill (Y) circle (2.2pt);
                \fill (Z) circle (2.2pt);

                \node[above right] at (X) {$x$};
                \node[right] at (P) {$p$};
                \node[below] at (W) {$z'$};
                \node[below right] at (Xzero) {$x_0$};
                \node[right] at (Y) {$y$};
                \node[left] at (Z) {$z$};

                \path (X) -- (W) node[pos=0.43, left = 5pt] {$\gamma$};
                \path (X) -- (Y) node[pos=0.5, right = 8pt] {$t/2$};
                \path (Y) -- (Xzero) node[pos=0.5, right = 8pt] {$t/2$};
                \path (Xzero) -- (P) node[pos=0.5, left = 10pt] {$\rho$};
                \path (Z) -- (Y) node[pos=0.5, below = 5pt] {$\le b$};

                \draw[decorate,decoration={brace,amplitude=8pt}]
                (X) -- (Y);
                \draw[decorate,decoration={brace,amplitude=8pt}]
                (Y) -- (Xzero);
                \draw[decorate,decoration={brace,amplitude=8pt}]
                (P) -- (Xzero);
                \draw[decorate,decoration={brace,mirror,amplitude=8pt}]
                (Z) -- (Y);

                \pic [draw, -, "$\theta$", angle eccentricity=1.35, angle radius=0.75cm]
                {angle = Z--X--P};
            \end{tikzpicture}
            \caption{Proof of Lemma \ref{coneinclusionlemma}.}
            \label{fig:twotriangles2}
        \end{figure}

        \subsubsection*{Case 1: $\rho^3 \le t \le \rho$.}
        \begin{enumerate}[(I)]
            \item
            Let $z'$ denote the closest point to $p$ on the geodesic $\gamma$, see Figure \ref{fig:twotriangles2}. Since the point $z'$ minimizes distance to $p$, the triangle $\triangle xz'p$ is a right triangle ($\angle xz'p = \pi/2$), whereas $\angle xyz = \pi/2$ by the definition of a right circular cone. By Lemma~\ref{righttrianglelemma} applied to the right triangles $\triangle xyz$ and $\triangle xz'p$, if $c_2$ is small enough then $z$ lies between $x$ and $z'$ on the geodesic $\gamma$. Since the ball $B_{\rho }(p)$ is contained in $K$, in order to prove that the geodesic $\gamma$ eventually reaches the set $K$, it suffices to show that

            \begin{equation}\label{distlerho0eq}
                d(z',p) \le \rho .
            \end{equation}

            \medskip
            By the definition of a right circular cone, 
            \begin{equation}\label{dzx0leneq}
                d(z,y) \le b.
            \end{equation}
            Hence, by Lemma \ref{righttrianglelemma},
            \[
                d(z,x) = \sqrt{t^2/4 + d(y,z)^2} \cdot \left(1 + O\left(t^2 + b^2\right)\right), \qquad d(z,x)\cdot \sin\theta = d(y,z) \cdot (1 + O(t^2+b^2))
            \]
            and
            \[
                (t+\rho )\sin\theta = d(z',p)(1 + O(t^2 + \rho ^2)).
            \]
            These estimates, together with \eqref{dzx0leneq}, give
            \begin{align*}
                d(z',p) & = \frac{(t + \rho )\cdot d(y,z)}{\sqrt{t^2/4 + d(y,z)^2}}\cdot \frac{1 + O(t^2 + b^2)}{1 + O(t^2 + \rho^2)}\\
                & \le \rho \cdot \frac{t/\rho + 1}{\sqrt{t^2/4b^2 + 1}}\cdot \frac{1 + O(t^2 + b^2)}{1 + O(t^2 + \rho^2)}.
            \end{align*}
            Since $t\le \rho$ and $b = c_2\cdot\sqrt{t\rho}$, it follows that
            \begin{equation}\label{eq:dzprimep}
                d(z',p) \le \rho \cdot\frac{t/\rho + 1}{\sqrt{(1/4c_2^2)\cdot t/\rho + 1}}\cdot (1 + C \rho^2).
            \end{equation}
            Since $\rho^3 \le t \le \rho$,
            \begin{align*}
                1 - \left(\frac{t/\rho + 1}{\sqrt{(1/4c_2^2)\cdot t/\rho + 1}}\right)^{2}
                & = \left(\frac{1}{4c_2^2} - 2 - \frac{t}{\rho}\right)\cdot\frac{t/\rho}{(1/4c_2^2)\cdot t/\rho + 1}
                \\
                & \ge \left(\frac{1}{4c_2^2}-3\right)\cdot\frac{\rho^2}{(1/4c_2^2)\cdot \rho^2 + 1}
                \\
                & \ge \frac{\rho^2}{2}\left(\frac{1}{4c_2^2}-3\right)
                \\
                & \ge 2C\rho^2,
            \end{align*}
            provided that $c_2$ and $R$ are sufficiently small. Hence, by \eqref{eq:dzprimep},
            \[
                d(z',p) \le \rho \cdot \sqrt{1 - 2C\rho^2} \cdot (1 + C\rho^2) \le \rho,
            \]
            which gives \eqref{distlerho0eq}.

            \item
            Work in normal coordinates $(x',x^n)$ centered at $x_0$, with the last coordinate corresponding to the direction $\nu\vert_{x_0}$. Since $K$ is convex and $B_R(x_0)$ is strongly convex, in this coordinate system
            \[
                K\cap B_R(x_0)\subseteq \{(x',x^n): x^n\le 0\}.
            \]

            Since $t<R$, $\rho\le R$, and $b=c_2\sqrt{t\rho}$, choosing $c_2$ small enough ensures that the cone $Y$ lies in $B_R(x_0)$. By Lemma \ref{lem:normal-geodesic-estimate}\eqref{normal-geodesic-initial}, the base of $Y$ is contained in $\{x^n\ge t/2-Cb^2\}$, and by Lemma \ref{lem:normal-geodesic-estimate}\eqref{normal-geodesic-chord} the last coordinate of a geodesic $\gamma$ joining $x$ to the base of the cone $Y$ satisfies
            \[
                \gamma^n(s) \ge t/2 - C'(s^2+b^2)\ge s\cdot\left(1/2 - C'R - C'c_2^2\rho\right).
            \]

            Hence, if $c_2$ and $R$ are smaller than some constant depending only on $K$ then $x^n>0$ throughout $Y$, whence $Y$ is disjoint from $K$.
        \end{enumerate}
        \subsubsection*{Case 2: $t \le \rho^3$.}
        \begin{enumerate}[(I)]
            \item
            We prove that in this case, if the constant $c_2$ is small enough then $\gamma(\sqrt{t\rho}) \in K$. To this end, we consider the first order Taylor expansion of $d(\gamma(s),p)$ at $0$ and use it to show that $d(\gamma(\sqrt{t\rho}),p) \le \rho$, which implies that $\gamma(\sqrt{t\rho}) \in K$ since $B_\rho(p) \subseteq K$.

            \medskip
            By the first variation formula, 
            \[
                \left.\frac{d}{ds}\right|_{s=0}d(\gamma(s),p) = -\cos\theta
            \]
            where $\theta = \angle zxy$. By the triangle inequality and the assumption $t \le \rho^3$, for every $s \le \sqrt{t\rho}$,
            \[
                d(\gamma(s),p) \ge d(x,p) - s = t + \rho - s \ge t + \rho - \sqrt{t\rho} \ge c \cdot \rho
            \]
            whence by Lemma \ref{hessianlemma},
            \begin{align*}
                 \frac{d^2}{ds^2}d(\gamma(s),p) & = O\left(\frac{1}{d(p,\gamma(s))}\right) = O\left(\frac{1}{\rho}\right).
            \end{align*}
            By Lemma \ref{righttrianglelemma} and the definition $b = c_2\sqrt{t\rho}$,
            \[
            \begin{aligned}
            \cos\theta 
            &\ge \frac{t/2}{\sqrt{t^2/4 + b^2}}\cdot (1 + O(t^2 + b^2))\\
            &= \frac{t/2}{\sqrt{t^2/4 + c_2^2t\rho}}\cdot (1 + O(t^2 + t\rho))\\
            &= \frac{1}{\sqrt{1 + 4c_2^2\rho/t}}\cdot(1 + O(t^2 + t\rho)).
            \end{aligned}
            \]
            Putting together the above estimates we get
            \begin{align*}
                d\left(\gamma\left(\sqrt{t\rho}\,\right),p\right) & = d(x,p) - \sqrt{t\rho}\cdot \cos \theta + O\left(\frac{1}{\rho}\cdot t\rho\right)\\
                & \le  t + \rho - \frac{\sqrt{t\rho}}{\sqrt{1 + 4c_2^2\rho/t}}\cdot(1 + O(t^2+ \rho t)) + O(t)\\
                & = \rho  + t \cdot \left(1 - \frac{1}{\sqrt{t/\rho + 4c_2^2}} + O(1)\right)\\
                & \le \rho + t\cdot\left(C - 1/(2c_2) + O(t/\rho)\right),
            \end{align*}
            which is again less than $\rho$ if $c_2$ is taken small enough. Thus the geodesic $\gamma$ eventually reaches the set $K$.
            \item
            The same argument as in the previous case shows that the cone $Y$ is disjoint from the set $K$.
        \end{enumerate}
        The proof of the lemma is complete.
    \end{proof}

    \begin{lemma}\label{lem:lowerbound}
        There exists $C_3>0$ such that, for every $x_0 \in \Sigma$ with $\rho(x_0) > 0$,
        \begin{equation}\label{eq:containmentcor}
            \gamma_{x_0}^{-1}(K^\delta\setminus K) \subseteq \left[0,C_3 \cdot \delta^{\frac{2}{n+1}}\cdot \rho(x_0)^{-\frac{n-1}{n+1}}\right]
        \end{equation}
        for all sufficiently small $\delta$.
    \end{lemma}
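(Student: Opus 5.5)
The plan is to combine Lemma~\ref{coneinclusionlemma} with Lemma~\ref{conevolumelemma}, and to split into two regimes according to whether $r(x)\le\rho(x_0)$ or not.

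Fix $x_0\in\Sigma$ with $\rho:=\rho(x_0)>0$, so that $x_0\in\Sigma'$ by Lemma~\ref{rhopositivelemma} and the perpendicular $\gamma_{x_0}$ is defined. Let $s>0$ with $x:=\gamma_{x_0}(s)\in K^\delta\setminus K$. By Lemma~\ref{lem:Hausdorff}, for $\delta<\delta_0$ we have $s=r(x)\le C_0\delta^{1/n}$. Shrinking $\delta$ further so that $C_0\delta^{1/n}<R$, Lemma~\ref{perplemma} gives $r(x)=s$ and $\Pi x=x_0$.

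\emph{Case $s\le\rho$.} Lemma~\ref{coneinclusionlemma} shows that $\CC(x,K)$ contains a right circular cone with height $s/2$ and radius $c_2\sqrt{s\rho}$. Since $\CC(x,K)$ is made of geodesics of length about $s$ near $K$, this cone should lie in $U$; if needed one can arrange it by taking $R$ small. Both parameters are less than $R$, provided $c_2\le1$ and $s,\rho\le R$. Lemma~\ref{conevolumelemma} then gives
\[
\delta\ge\Vol(\CC(x,K))\ge c_1\cdot\tfrac{s}{2}\cdot c_2^{n-1}(s\rho)^{\frac{n-1}{2}}=c\, s^{\frac{n+1}{2}}\rho^{\frac{n-1}{2}},
\]
hence $s\le C\delta^{\frac{2}{n+1}}\rho^{-\frac{n-1}{n+1}}$.

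\emph{Case $s>\rho$.} Monotonicity of $\CC$ along the perpendicular is not available, so I would apply the cone lemma at a different point. One option is to replace $\rho$ by $\rho'=\min(s,R)$ in the cone construction, but that is not allowed, since $\rho(x_0)$ bounds the inner balls. Instead I would use a cone built from the ball $B_\rho(p)$ directly. Applying Lemma~\ref{coneinclusionlemma} with $r(x)=s$ fails because its hypothesis requires $s\le\rho$. Rereading the proof, however, the case $t\le\rho$ was used only to control $d(z',p)\le\rho$. With the radius $b=c_2\rho$ (equivalently the $\sqrt{t\rho}$ formula capped at $t=\rho$), the same right-triangle comparison (Lemma~\ref{righttrianglelemma}) shows that geodesics from $x$ through a base disc of radius $c_2\rho$ at height $s/2$ still hit $B_\rho(p)$ when $c_2$ is small. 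This is because the angular radius of $B_\rho(p)$ seen from $x$ is about $\rho/(s+\rho)\gtrsim\rho/s$, while the base subtends about $2c_2\rho/s$. Disjointness from $\mathrm{int}K$ follows exactly as in part (II) of that proof. The cone volume is then $\ge c\,s\rho^{n-1}$, so $\delta\ge c\,s\rho^{n-1}$. Combined with $s>\rho$, this gives
\[
\delta\ge c\,s^{\frac{n+1}{2}}\rho^{\frac{n-1}{2}}\cdot\Big(\tfrac{\rho}{s}\Big)^{\frac{n-1}{2}}.
\]
That lower bound is not sufficient on its own. Writing $s=\lambda\rho$, I instead need $s\rho^{n-1}\ge c\,s^{\frac{n+1}{2}}\rho^{\frac{n-1}{2}}$, which is false for large $\lambda$. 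So the second case would have to be handled differently: I would try to show it cannot occur for small $\delta$ unless it is already covered by the bound. Since $s\le C_0\delta^{1/n}$, it suffices to check that $C_0\delta^{1/n}\le C_3\delta^{\frac{2}{n+1}}\rho^{-\frac{n-1}{n+1}}$ whenever $\rho<s$. This would require $\rho\lesssim\delta^{\frac1n}$, and indeed $\rho<s\le C_0\delta^{1/n}$. Then
\[
\delta^{\frac{2}{n+1}}\rho^{-\frac{n-1}{n+1}}\ge \delta^{\frac{2}{n+1}}(C_0\delta^{1/n})^{-\frac{n-1}{n+1}}=C_0^{-\frac{n-1}{n+1}}\delta^{\frac{2n-(n-1)}{n(n+1)}}=C_0^{-\frac{n-1}{n+1}}\delta^{\frac1n},
\]
so $s\le C_0\delta^{1/n}\le C_0^{\frac{2n}{n+1}}\delta^{\frac{2}{n+1}}\rho^{-\frac{n-1}{n+1}}$. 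This closes the second case using only Lemma~\ref{lem:Hausdorff}, and the modified cone argument is unnecessary.

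I therefore settle on the two-case proof: the cone lemmas when $s\le\rho$, and the Hausdorff bound when $s>\rho$. $C_3$ is the larger of the two constants. The main obstacle is verifying that the cone of Lemma~\ref{coneinclusionlemma} lies in $U$ with parameters in $(0,R)$, which follows from $s$ being small.
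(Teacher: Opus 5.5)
Your proof is correct and matches the paper's argument. When $t\le\rho(x_0)$, the cone lemmas (Lemmas~\ref{coneinclusionlemma} and~\ref{conevolumelemma}) give $\Vol(\CC(x_t,K))\ge c\,t^{\frac{n+1}{2}}\rho(x_0)^{\frac{n-1}{2}}$; when $t>\rho(x_0)$, the Hausdorff bound $t\le C_0\delta^{1/n}$ alone yields $t\le C_0^{\frac{2n}{n+1}}\delta^{\frac{2}{n+1}}\rho(x_0)^{-\frac{n-1}{n+1}}$, and $C_3$ is the maximum of the two constants. The modified-cone detour you explored in the second case is unnecessary, as you concluded, and can simply be deleted.
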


    \begin{proof}
        Let $x_0 \in \Sigma$ satisfy $\rho(x_0) >0$. By Lemma \ref{rhopositivelemma}, $x_0 \in \Sigma'$. Let $\gamma_{x_0}$ denote the perpendicular to $K$ at $x_0$ and write $x_t := \gamma_{x_0}(t)$, so that
        \[
            r(x_t) = t \qquad \text{ and } \qquad \Pi\left(x_t\right) = x_0
            \qquad \text{for every $t \in (0,R)$.}
        \]

        By Lemma \ref{coneinclusionlemma}, for every $t \in (0,\rho(x_0)]$ the set $\CC(x_t,K)$ contains a right circular cone of height $t/2$ and radius $c_2\cdot\sqrt{t\,\rho(x_0)}$, which by Lemma
        \ref{conevolumelemma} has volume $\ge c\cdot  t^{\frac{n+1}{2}}\cdot \rho(x_0)^{\frac{n-1}{2}}$ where $c=c_1c_2^{n-1}/2$. Hence
        \begin{equation}\label{eq:volumecor}
            \Vol(\CC(x_t,K)) \ge c\cdot t^{\frac{n+1}{2}}\cdot \rho(x_0)^{\frac{n-1}{2}} \qquad \text{for every $t \in \left( 0,\rho(x_0)\right]$.}
        \end{equation}
        
        Let $t \in \gamma_{x_0}^{-1}(K^\delta\setminus K)$. Then
        \begin{equation}\label{eq:VolCCxtKledelta}
            \Vol(\CC(x_t,K)) \le \delta.
        \end{equation}
        
        Assume that $\delta$ is small enough that 
        \begin{equation}\label{eq:tlessthanC0delta}
            t \le C_0\delta^{1/n},
        \end{equation}
        which is possible by Lemma~\ref{lem:Hausdorff}. Set
        \[
            C_3 = \max\left\{c^{-\frac{2}{n+1}},\,C_0^{\frac{2n}{n+1}}\right\}.
        \]
        If $t \le \rho(x_0)$ from \eqref{eq:volumecor} and \eqref{eq:VolCCxtKledelta} imply that $t$ belongs to the right-hand side of \eqref{eq:containmentcor}. If instead $t > \rho(x_0)$, then by \eqref{eq:tlessthanC0delta},
        \[
            t \le C_0\delta^{1/n} \le C_0^{\frac{2n}{n+1}}\delta^{\frac{2}{n+1}}\rho(x_0)^{-\frac{n-1}{n+1}},
        \]
        which again implies that $t$ belongs to the right-hand side of \eqref{eq:containmentcor}.
    \end{proof}
    
    \subsection{Step III: Integration}

    In this section we complete the proof of Theorem \ref{mainthm}. Recall the definition of the function $\rho : \Sigma \to [0,R]$ given in \eqref{eq:rhodef}.
    \begin{lemma}[\text{cf. \cite[Lemma 5]{SW}}]\label{rhointegrablelemma}
        For every $0 < \alpha < 1$, 
        \[
            \int_\Sigma\rho^{-\alpha}d\cH^{n-1} < \infty.
        \]
    \end{lemma}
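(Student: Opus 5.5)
Since $\rho \le R$ and $\Sigma$ is compact, it suffices to prove the finiteness locally: for every $x\in\Sigma$ there is a neighborhood $V\ni x$ with $\int_{\Sigma\cap V}\rho^{-\alpha}\,d\cH^{n-1}<\infty$. The plan is to reduce, via the chart of Lemma \ref{lem:convexchart}, to the Euclidean statement \cite[Lemma 5]{SW}. That lemma asserts that for a convex body $L\subseteq\RR^n$ with Euclidean inner rolling radius $\rho_L$ (largest Euclidean ball inside $L$ touching the boundary at the point), one has $\int_{\partial L}\rho_L^{-\alpha}d\cH^{n-1}<\infty$ for $0<\alpha<1$. The argument there is layer-cake: $\int\rho^{-\alpha}=\alpha\int_0^\infty s^{-\alpha-1}\cH^{n-1}(\{\rho<s\})\,ds$, together with the estimate $\cH^{n-1}(\{\rho_L<s\})=O(s)$. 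This estimate follows by comparing $\partial L$ with the boundary of the inner parallel body $L_{-s}=\{y: B_s(y)\subseteq L\}$, since points with $\rho_L\ge s$ are exactly those touched by $\partial L_{-s}+B_s$.

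\textbf{Step 1: localize and build a Euclidean convex body.} Fix $x\in\Sigma$ and take the chart $\phi:V_x\to\RR^n$ from Lemma \ref{lem:convexchart}, in which $\phi(K\cap V_x)$ is Euclidean convex. Shrinking, take $V\Subset V_x$ with $\phi(V)$ a small Euclidean ball $B$ centered at a point of $\phi(\Sigma)$. Build a Euclidean convex body $L$ such that $L\cap B'=\phi(K)\cap B'$ for a slightly smaller concentric ball $B'$. For instance, take $L=\mathrm{conv}\big((\phi(K\cap V)\cap \bar B'')\cup\{\text{an interior ball}\}\big)$ with $B'\subset B''\subset B$, or simply $\phi(K\cap V)\cap\bar B''$ itself. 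In the latter case the new boundary lies on $\partial B''$, away from $B'$, and $\partial L\cap B'=\phi(\Sigma)\cap B'$.

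\textbf{Step 2: compare the rolling radii.} The key claim is that for $y\in\Sigma\cap\phi^{-1}(B')$, $\rho(y)\ge c\,\rho_L(\phi(y))$ with $c>0$ depending only on the chart. The idea is that a Euclidean ball $B^{e}_{s}(q)\subseteq L$ touching $\partial L$ at $\phi(y)$ is contained in $\phi(K)$ when $s$ is small relative to $\mathrm{dist}(B',\partial B'')$. Since $\phi$ is a smooth diffeomorphism with bounded $C^2$ norms, the image under $\phi^{-1}$ of a Euclidean ball of radius $s$ contains a Riemannian ball of radius $cs$ tangent at $y$. Tangency holds because both sets are $C^2$ domains touching at $y$ with common normal direction, and their curvatures are $O(1/s)$ with comparable constants, so a Riemannian ball of radius $cs$ centered on the inward normal fits inside. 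This requires $\rho_L\le$ some $s_0$, which is harmless since $\rho\ge$ const wherever $\rho_L$ is large, after capping at $R$. I expect this comparison to be the main obstacle: quantifying that a small geodesic ball tangent at $y$ sits inside a distorted Euclidean ball tangent at the same point with the same normal, uniformly in the radius.

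\textbf{Step 3: conclude.} By Step 2, $\rho^{-\alpha}\le c^{-\alpha}\rho_L^{-\alpha}\circ\phi$ on $\Sigma\cap\phi^{-1}(B')$. The Hausdorff measures $\cH^{n-1}$ on $\Sigma$ and on $\partial L$ are comparable through the bi-Lipschitz map $\phi$ (as in the proof of Lemma \ref{lem:alexandrov}). Hence $\int_{\Sigma\cap\phi^{-1}(B')}\rho^{-\alpha}\le C\int_{\partial L}\rho_L^{-\alpha}<\infty$ by \cite[Lemma 5]{SW}. Covering $\Sigma$ by finitely many such sets $\phi^{-1}(B')$ finishes the proof.
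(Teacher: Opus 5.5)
Your reduction is the same as the paper's. You use the convex chart of Lemma~\ref{lem:convexchart}, apply \cite[Lemma 5]{SW} to the Euclidean rolling radius, and transfer integrability back through $\rho\ge c\,\rho_L\circ\phi$ and the bi-Lipschitz comparability of $\cH^{n-1}$. Your Step 1, truncating by $\bar B''$ to get an honest convex body, is if anything more careful than the paper, which applies \cite[Lemma 5]{SW} directly to the not necessarily compact set $\Psi_i(K\cap V_i)$. Also, $B^e_s(q)\subseteq L\subseteq\phi(K\cap V)$ holds for every $s$, so that inclusion needs no smallness of $s$.

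The gap is the step you flag as the main obstacle, and the reason you sketch for it does not suffice. Put $p:=\gamma_y(-cs)$. You argue that $\partial B^e_s(q)$ and $\phi(\partial B_{cs}(p))$ touch at $\phi(y)$ with a common normal and have curvatures $O(1/s)$. That is information at a single point, and an upper bound at that. It gives at most containment near $\phi(y)$, not the global inclusion $\phi(B_{cs}(p))\subseteq B^e_s(q)$. What is needed is a \emph{lower} bound on the Euclidean curvature of the inner set at \emph{every} one of its boundary points, combined with a global comparison principle. This is exactly how the paper closes the argument:
\begin{enumerate}[(1)]
\item Geodesic spheres of radius $r$ have principal curvatures $1/r+O(r)$ by Lemma~\ref{hessianlemma}.
\item Passing from $g$ to the Euclidean metric of the chart changes second fundamental forms by a bounded multiplicative factor and a bounded additive error (coming from the Christoffel symbols).
\item Hence the Euclidean principal curvatures of $\phi(\partial B_{cs}(p))$ are everywhere at least $a/(cs)-C$. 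This is at least $1/s$ once $c$ is small, uniformly for $s\le s_0$.
\item So $\phi(B_{cs}(p))$ is a smooth convex body with all curvatures $\ge 1/s$, internally tangent to $B^e_s(q)$ at $\phi(y)$ with the same normal.
\item Blaschke's rolling theorem \cite{Blaschke56} then gives $\phi(B_{cs}(p))\subseteq B^e_s(q)$, i.e.\ $\rho(y)\ge c\,s$.
\end{enumerate}
With this inserted, your proof goes through.
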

    \begin{proof}
        Let $0 < \alpha < 1$. By Lemma \ref{lem:convexchart} and compactness, there exists a finite open cover $\{V_i\}$ of the hypersurface $\Sigma$ and smooth diffeomorphism $\{\Psi_i : V_i \to \tilde V_i \subseteq \RR^n\}$ such that for every $i$, the set $\Psi_i(K \cap V_i)$ is convex as a subset of $\RR^n$. 
        
        \medskip
        For every $i$ and every $y \in \Psi_i(\Sigma\cap V_i)$, let $\tilde\rho_i(y)$ denote the radius of the largest closed Euclidean sphere contained in $\Psi_i(K\cap V_i)$ and containing the point $y$. Then, by \cite[Lemma 5]{SW}, the function $\tilde \rho_i^{-\alpha}$ is $\cH^{n-1}$-integrable on $\Psi_i(\Sigma\cap V_i)$ for all $0 < \alpha <1$. Hence, since $\Psi_i$ is a diffeomorphism, if we set 
        \[
            \rho_i : = \tilde\rho_i \circ \Psi_i
        \]
        then $\rho_i^{-\alpha}$ is $\cH^{n-1}$-integrable on $\Sigma\cap V_i$. Since $\Sigma$ is covered by the finite collection $\{V_i\}$, if we set
        \[
            \hat \rho(x) : = \max\{\rho_i(x) \, : \, V_i \ni x\}, \qquad x \in \Sigma.
        \]
        then
        \[
            \int_\Sigma\hat\rho^{-\alpha}\,d\cH^{n-1} < \infty.
        \]
        Thus the proof will be finished once we show that there exists $c > 0$ such that $\rho \ge c \cdot \hat\rho$ on $\Sigma$.

        \medskip
        Let $x \in \Sigma$, and let $V_i \ni x$. We need to show that $\rho(x) \ge c \cdot \rho_i(x)$. By the definition of $\rho$, this means that the set $K$ contains a ball of radius $c \cdot \rho_i(x)$ such that the point $x$ lies on its boundary. If $\rho_i(x) = 0$, there is nothing to prove; assume therefore that $\rho_i(x) > 0$.
        
        \medskip
        By the definition of $\rho_i$, there exists a Euclidean ball $B$ of radius $\rho_i(x)$ contained in $\Psi_i(K \cap V_i)$ and containing $\Psi_i(x)$ on its boundary. Since $\{\Psi_i\}$ is a finite collection of smooth diffeomorphisms on relatively compact sets, there exists $c > 0$, independent of $i$, such that if $B'$ is a Riemannian ball of radius $c \rho_i(x)$ tangent to the hypersurface $\Psi_i^{-1}(\partial B)$ at $x$, then the \emph{Euclidean} second fundamental form of $\Psi_i(B')$ is everywhere larger than $\rho_i(x)^{-1}\cdot \mathrm{I}$. It then follows from Blaschke's rolling theorem \cite{Blaschke56} that $\Psi_i(B')\subseteq B$, and therefore $B' \subseteq \Psi_i^{-1}(B) \subseteq K$. 
    \end{proof}

    We can now finish the proof of Theorem \ref{mainthm}.
    By Lemma \ref{lem:Hausdorff},
    \[
        K^\delta \subseteq U_{C_0\delta^{1/n}}.
    \]
    Thus, by Proposition \ref{tubeprop},
    \begin{align*}
        \delta^{-\frac{2}{n+1}}\Vol(K^\delta\setminus K) & = \delta^{-\frac{2}{n+1}}\Vol\left((K^\delta\setminus K) \cap U_{C_0\delta^{1/n}}\right)\\
        & = \delta^{-\frac{2}{n+1}}\left(\int_{\Sigma'}\cH^1\left(\gamma_x^{-1}(K^\delta\setminus K)\right)d\cH^{n-1}(x) + O\left(\delta^{\frac2n}\right)\right)\\
        & = \int_{\Sigma'}\delta^{-\frac{2}{n+1}}\cdot \cH^1\left(\gamma_x^{-1}(K^\delta\setminus K)\right)d\cH^{n-1}(x) + O\left(\delta^{\frac{2}{n(n+1)}}\right).
    \end{align*}
    By Lemma \ref{lem:alexandrov}, $\cH^{n-1}(\Sigma'\setminus\Sigma'') = 0$, so in fact
    \begin{equation}\label{eq:finallimit}
        \delta^{-\frac{2}{n+1}}\Vol(K^\delta\setminus K) = \int_{\Sigma''}\delta^{-\frac{2}{n+1}}\cdot \cH^1\left(\gamma_x^{-1}(K^\delta)\right)d\cH^{n-1}(x) + O\left(\delta^{\frac{2}{n(n+1)}}\right).
    \end{equation}
    By Proposition \ref{volumeprop}, if $x \in \Sigma''$ satisfies $\kappa(x) > 0$ then
    \[
        \Vol(\CC(\gamma_x(t),K))^{\frac{2}{n+1}}
        =
        \beta_n^{-1}\cdot\kappa(x)^{-\frac{1}{n+1}}\cdot t + o(t),
    \]
    whence, by the definition of $K^\delta$, for every $\eps > 0$ there exists $t_0 = t_0(x,\eps) > 0$ such that
    \begin{equation}\label{eq:intervalsandwich}
        \left[0, (1 - \eps)\delta^{\frac{2}{n+1}}\beta_n\kappa(x)^{\frac{1}{n+1}}\right] \subseteq \gamma_x^{-1}(K^\delta\setminus K) \cap \left[0,t_0\right] \subseteq
        \left[0, (1 + \eps)\delta^{\frac{2}{n+1}}\beta_n\kappa(x)^{\frac{1}{n+1}}\right].
    \end{equation}
    
    By Lemma \ref{lem:Hausdorff}, $\gamma_x^{-1}(K^\delta) \subseteq [0,t_0]$ for every sufficiently small $\delta$. Combining this with \eqref{eq:intervalsandwich}, we get that for every $x \in \Sigma''$ with $\kappa(x) > 0$,
    \[
        \lim_{\delta\searrow 0}\delta^{-\frac{2}{n+1}}\cH^1\left(\gamma_x^{-1}(K^\delta\setminus K)\right) = \beta_n\kappa(x)^{\frac{1}{n+1}}.
    \]

    For $x \in \Sigma''$ with $\kappa(x) = 0$, Proposition~\ref{volumeprop} implies that for every $\eps > 0$ there exists $t_0 = t_0(\eps,x) > 0$ such that 
    \begin{equation}\label{eq:flatvolestimate}
        \Vol(\CC(\gamma_x(t),K))^{2/(n+1)} \ge t/\eps \qquad \text{ for all } 0 < t < t_0.
    \end{equation}
    By Lemma \ref{lem:Hausdorff}, if $\delta$ is sufficiently small then $\gamma_x^{-1}(K^\delta) \subseteq [0, t_0]$, and therefore \eqref{eq:flatvolestimate} gives
    \[
        \delta^{-2/(n+1)}\cH^1(\gamma_x^{-1}(K^\delta\setminus K)) \le \eps.
    \]
    Thus $\delta^{-2/(n+1)}\cH^1(\gamma_x^{-1}(K^\delta\setminus K)) \to 0$.

    \medskip
    In summary,
    \begin{equation}\label{eq:pointwiselim}
        \lim_{\delta\searrow 0}\delta^{-\frac{2}{n+1}}\cH^1\left(\gamma_x^{-1}(K^\delta\setminus K)\right) = \beta_n\kappa(x)^{\frac{1}{n+1}} \qquad \text{ for every $x \in \Sigma''$}.
    \end{equation}
    By Lemma \ref{lem:lowerbound}, for $\cH^{n-1}$-almost every $x \in \Sigma'$,
    \[
        \cH^1\left(\gamma_x^{-1}(K^\delta\setminus K)\right) \le C_3 \cdot \delta^{\frac{2}{n+1}}\cdot \rho(x)^{-\frac{n-1}{n+1}},
    \]
    and therefore, by Lemma \ref{rhointegrablelemma}, the integrand on the right hand side of \eqref{eq:finallimit} is bounded by an integrable function, and by \eqref{eq:pointwiselim} it converges pointwise to $\beta_n\cdot\kappa^{\frac{1}{n+1}}$.
    We may therefore apply the dominated convergence theorem and obtain formula \eqref{maineq}. The proof of Theorem \ref{mainthm} is complete.

    \appendix

    \section{Appendix}

    \subsection{Approximate Euclidean geometry on Riemannian manifolds}
    This appendix contains some elementary facts about Riemannian geometry, specifically its ``closeness'' to Euclidean geometry in small scales. Throughout, $(M,g)$ will be a Riemannian manifold and $A \subseteq M$ a compact, strongly convex set; all constants implied by big-O notation are allowed to depend only on the set $A$.
    
    \begin{lemma}\label{normallemma}
        Let $x \in A$ and $R > 0$ and assume that the ball $B_R(x)$ is contained in the set $A$. Let $L: \RR^n \to T_xM$ be a linear isometry and define $G : B_R(0) \to B_R(x)$ by $G : = \exp_x\circ \, L$. Then
        \begin{equation}\label{eq:normal}
            (G^*g)_{ij}\vert_v = \delta_{ij} + O(|v|^2) \qquad \text{ and } \qquad \sqrt{\det(G^*g\vert_v)} = 1 + O(|v|^2)
        \end{equation}
        for all $v \in B_R(0)$. Consequently, if $d_0$ and $\Vol_0$ denote the Euclidean distance and volume on $\RR^n$, respectively, then for every $v,v' \in B_R(0)$,
        \begin{equation}\label{eq:dd0}
            d(G(v),G(v')) = d_0(v,v')(1 + O(R^2)),
        \end{equation}
        and for every Borel set $S \subseteq B_R(0)$,
        \begin{equation}\label{eq:VolVol0}
            \Vol(G(S)) = \Vol_0(S)(1 + O(R^2)).
        \end{equation}
    \end{lemma}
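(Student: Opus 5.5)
The metric estimate in \eqref{eq:normal} comes from the standard expansion of the metric in normal coordinates. The two consequences then follow by integrating. Since $A$ is compact and strongly convex, the injectivity radius and the curvature tensor together with its first covariant derivatives are uniformly bounded on a neighborhood of $A$, so every constant below depends only on $A$.

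\textbf{Metric in normal coordinates.} Fix $v\in B_R(0)$ and $w\in\RR^n$, and let $\sigma(s)=\exp_x(sLv)$. Then $dG_v(w)=J(1)$, where $J$ is the Jacobi field along $\sigma$ with $J(0)=0$ and $J'(0)=Lw$. Write $J(s)=\sum_i a_i(s)E_i(s)$ in a parallel orthonormal frame $E_i$ along $\sigma$ with $E_i(0)=Le_i$. The Jacobi equation becomes $a''=-|v|^2\mathcal R(s)a$, where $\mathcal R$ is a bounded matrix. A Gr\"onwall argument gives $a(s)=s w+O(|v|^2|w|s^3)$, uniformly for $|v|\le R$, and hence $|dG_v(w)|^2=|w|^2+O(|v|^2|w|^2)$. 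Polarizing gives $(G^*g)_{ij}|_v=\delta_{ij}+O(|v|^2)$. Expanding the determinant of $I+O(|v|^2)$ and taking the square root gives $\sqrt{\det G^*g|_v}=1+O(|v|^2)$. If $|v|^2$ is not small relative to these constants, the bounds still hold trivially after enlarging the constant, because $B_R(x)\subseteq A$ forces $R$ to be bounded.

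\textbf{Consequences.} For \eqref{eq:VolVol0}, apply the change of variables formula: $\Vol(G(S))=\int_S\sqrt{\det G^*g}\,dv$, and on $B_R(0)$ the integrand is $1+O(R^2)$.

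For \eqref{eq:dd0}, the metric bound shows that the length of any curve in $B_R(0)$ computed with $G^*g$ is $1+O(R^2)$ times its Euclidean length. For the upper bound, take the straight segment from $v$ to $v'$; it stays in $B_R(0)$ because the ball is convex, so $d(G(v),G(v'))\le d_0(v,v')(1+O(R^2))$. For the lower bound, use that $B_R(x)\subseteq A$ is strongly convex (when $R$ is small; otherwise the comparison is trivial again up to constants). The minimizing geodesic from $G(v)$ to $G(v')$ therefore lies in $B_R(x)$, and its pullback is a curve in $B_R(0)$ whose Euclidean length is at most $(1+O(R^2))$ times its Riemannian length. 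That Euclidean length is at least $d_0(v,v')$, which gives $d(G(v),G(v'))\ge d_0(v,v')(1+O(R^2))$.

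\textbf{Main obstacle.} The delicate step is keeping the Jacobi field estimate uniform in the base point $x\in A$. This requires uniform bounds on curvature along geodesics of length at most $R$ starting in $A$, which compactness of $A$ (or of a slightly larger neighborhood of it) provides. A second point to watch is that minimizing geodesics between points of $B_R(x)$ stay inside the ball, which is exactly what strong convexity gives.
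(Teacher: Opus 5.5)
Your proposal is correct and follows the paper's proof: a Jacobi field computation gives the metric and volume-density estimates (the paper cites \cite[Problem 10-1]{Lee} for this). The distance comparison then comes from integrating along the straight segment and along the pulled-back minimizing geodesic, and the volume comparison from integrating the density over $S$. Your treatment of whether the minimizing geodesic stays in $B_R(x)$ fills in a detail the paper leaves implicit: you use strong convexity of small balls, and note that for $R$ bounded below the lower bound holds trivially after enlarging the constant.
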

    \begin{proof}
        The estimates in \eqref{eq:normal} can be proved using elementary Jacobi field computations, see e.g. \cite[Problem 10-1]{Lee}. Estimate \eqref{eq:dd0} can be derived by integrating the first estimate in \eqref{eq:normal} along a straight line joining a pair of points in $B_R(0)$, and then along a minimizing geodesic of $g$ joining the same pair of points. Estimate \eqref{eq:VolVol0} follows from integrating the second estimate in \eqref{eq:normal} on the set $S$.
    \end{proof}

    \begin{lemma}\label{lem:normal-geodesic-estimate}
        Let $x \in A$ and $R > 0$ and assume that the ball $B_R(x)$ is contained in the set $A$. Let $\gamma:[0,1]\to B_R(x)$ be a constant-speed geodesic. Then in normal coordinates centered at $x$,
        \begin{enumerate}[$(i)$]
            \item \label{normal-geodesic-initial}
            \(
                \gamma(s)=\gamma(0)+s\dot\gamma(0)+O(s^2|\dot\gamma(0)|^2) \qquad \text{and}
            \)
            \item \label{normal-geodesic-chord}
            \(
                \gamma(s)=(1-s)\gamma(0)+s\gamma(1)+O(|\gamma(1)-\gamma(0)|^2), \qquad 0\le s\le 1.
            \)
        \end{enumerate}
    \end{lemma}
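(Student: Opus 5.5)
The plan is to work entirely in the normal chart $G:=\exp_x\circ L$ for some linear isometry $L$, so that $\gamma$ is represented by a curve $c(s):=G^{-1}(\gamma(s))$ in $B_R(0)\subseteq\RR^n$. In these coordinates the geodesic equation reads
\[
\ddot c^k(s) = -\Gamma^k_{ij}(c(s))\,\dot c^i(s)\dot c^j(s),
\]
and the Christoffel symbols of $G^*g$ are smooth and uniformly bounded on $B_R(0)$. The bound is uniform in $x\in A$, by compactness of $A$ and smooth dependence of normal coordinates on the base point; this is the same fact that underlies Lemma \ref{normallemma}. It follows that $|\ddot c(s)|\le C|\dot c(s)|^2$. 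It then remains to show that the coordinate speed $|\dot c(s)|$ stays comparable to its initial value along the segment.

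For this comparison, note that $\gamma$ has constant Riemannian speed, so $g(\dot c,\dot c)\equiv g(\dot c(0),\dot c(0))$. By the first estimate in \eqref{eq:normal}, the metric coefficients $G^*g$ are uniformly equivalent to the Euclidean ones on $B_R(0)$, provided $R$ is small, which we may assume since only the small-scale regime is used. Hence $|\dot c(s)|\le C|\dot c(0)|$ for all $s$. Integrating the bound on $\ddot c$ twice from $0$ gives
\[
c(s) = c(0) + s\,\dot c(0) + O\!\left(s^2|\dot c(0)|^2\right),
\]
which is $(i)$.

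For $(ii)$, apply the Taylor formula with integral remainder to the difference $c(s)-\bigl((1-s)c(0)+s\,c(1)\bigr)$. This function vanishes at $s=0$ and $s=1$, and its second derivative equals $\ddot c$. Writing it via the Green's function of $d^2/ds^2$ on $[0,1]$ with Dirichlet boundary conditions, whose kernel is bounded by $1/4$, we obtain a bound of $\sup|\ddot c|\le C|\dot c(0)|^2$. It remains to replace $|\dot c(0)|$ by $|c(1)-c(0)|$. The Riemannian length of $\gamma$ equals $d(\gamma(0),\gamma(1))$, and this is comparable to $|\dot c(0)|$ by the speed comparison above. Since $\gamma$ is minimizing (the ball is strongly convex), \eqref{eq:dd0} gives
\[
d(\gamma(0),\gamma(1)) = |c(1)-c(0)|\,(1+O(R^2)),
\]
which yields $(ii)$.

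I expect the main care point to be the uniformity of the Christoffel bounds and the speed comparison, and in particular justifying that the geodesic is the minimizing one so that \eqref{eq:dd0} applies. Here I would invoke strong convexity of $A$ and of the ball $B_R(x)$, as granted by property (iii) on the choice of $R$.
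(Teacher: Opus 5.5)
Your argument is correct and is essentially the paper's: bounded Christoffel symbols and constant Riemannian speed give $\ddot c=O(|\dot c(0)|^2)$, integrating twice yields $(i)$, and \eqref{eq:dd0} converts $|\dot c(0)|$ into $|c(1)-c(0)|$ for $(ii)$. The only difference is cosmetic: you bound the interpolation error with the Dirichlet Green's function, whereas the paper sets $s=1$ in $(i)$ to get $\dot\gamma(0)=\gamma(1)-\gamma(0)+O(|\gamma(1)-\gamma(0)|^2)$ and substitutes this back. As you noticed, both versions tacitly use that $\gamma$ is minimizing (or at least has length comparable to $d(\gamma(0),\gamma(1))$). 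Strong convexity of the ball by itself only concerns minimizing geodesics, so this is really an implicit hypothesis; it does hold in the paper's application to the generators of the cones in Lemma \ref{coneinclusionlemma}.
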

    \begin{proof}
        By compactness, the Christoffel symbols in normal coordinates on $B_R(0)$ are bounded by a constant depending only on $A$. Hence the geodesic equation gives $\ddot\gamma=O(|\dot\gamma|_0^2)$, where $|\cdot|_0$ is the Euclidean norm and the second derivative is taken coordinate-wise. Since the Riemannian speed of $\gamma$ is constant, by Lemma \ref{normallemma} we have $|\dot\gamma|_0=O(|\dot\gamma(0)|)$. Integrating twice gives \eqref{normal-geodesic-initial}. By \eqref{eq:dd0} we also have $|\dot\gamma(0)|_0=O(|\gamma(1)-\gamma(0)|)$, whence $\dot\gamma(0)=\gamma(1)-\gamma(0)+O(|\gamma(1)-\gamma(0)|^2)$. Substituting this into \eqref{normal-geodesic-initial} with $s=1$ gives \eqref{normal-geodesic-chord}.
    \end{proof}

    For $r > 0$ and $k \in \RR$ we define
    \[
        \cos_k(r):=
        \begin{cases}
            \cos(\sqrt{k}\,r), & k>0,\\
            1 & k = 0,\\
            \cosh(\sqrt{-k}\,r), & k<0
        \end{cases}
        \qquad
        \text{and}
        \qquad
        \sin_k(r):=
        \begin{cases}
            \frac{\sin(\sqrt{k}\,r)}{\sqrt k}, & k>0,\\
            r & k = 0,\\
            \frac{\sinh(\sqrt{-k}\,r)}{\sqrt{-k}}, & k<0.
        \end{cases}
    \]

    \begin{lemma}\label{hessianlemma}
    Let $x \in A$ and denote by $r_x : = d(\cdot,x)$ the distance function to the point $x$. Then
    \[
        \Hess r_x = \frac{1}{r_x} \cdot \left(g - (dr_x)^2\right) + O(r_x) \qquad
        \text{on the set $A \setminus \{x\}$}.
    \]
    \end{lemma}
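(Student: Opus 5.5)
We use the standard Jacobi field description of the Hessian of a distance function, combined with compactness of $A$. Since $A$ is compact and strongly convex, every $y \in A\setminus\{x\}$ is joined to $x$ by a unique minimizing geodesic. Moreover, for $x,y\in A$ the point $y$ lies within the injectivity radius of $x$ (possibly after shrinking to a neighborhood, as property (iii) ensures in our application). So $r_x$ is smooth on $A\setminus\{x\}$, and we may write $y=\exp_x(r\,u)$ with $u \in S_xM$ and $r=r_x(y)$. Let $\gamma(s)=\exp_x(su)$. Then $\Hess r_x(\nabla r_x,\cdot)=0$, because $|\nabla r_x|=1$ and $\nabla r_x=\dot\gamma$. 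It therefore suffices to compute $\Hess r_x|_y(w,w)$ for $w\perp\dot\gamma(r)$.

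For such $w$, let $J$ be the Jacobi field along $\gamma$ with $J(0)=0$ and $J(r)=w$; this field is unique because there are no conjugate points. The standard formula gives $\Hess r_x|_y(w,w)=\langle J'(r),J(r)\rangle$, which is the index form $I_r(J,J)=\int_0^r\left(|J'|^2-\langle R(J,\dot\gamma)\dot\gamma,J\rangle\right)ds$. We compare with the Euclidean Jacobi field. Let $E$ be the parallel transport of $w$ along $\gamma$ and set $\bar J(s)=\tfrac{s}{r}E(s)$, so that $\langle \bar J'(r),\bar J(r)\rangle=|w|^2/r$, matching the claimed leading term $\frac1{r}(g-dr_x^2)(w,w)$.

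For the error, we use that on the compact set $A$ the sectional curvatures are bounded, $|\sec|\le \Lambda$. Rauch/Riccati comparison with the constant-curvature models $\pm\Lambda$ gives
\[
\frac{\cos_{\Lambda}(r)}{\sin_{\Lambda}(r)}|w|^2\le \Hess r_x(w,w)\le \frac{\cos_{-\Lambda}(r)}{\sin_{-\Lambda}(r)}|w|^2 ,
\]
with $\cos_k,\sin_k$ as just defined. Each model quotient equals $\frac1r+O(\Lambda r)$ for $r$ bounded by $\operatorname{diam}A$. In the positive case we need $r<\pi/\sqrt\Lambda$, which holds after shrinking $A$ if necessary; alternatively, since $r$ is bounded away from conjugate radius on compact $A$, the Riccati solution stays bounded and the estimate holds with a worse constant. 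Combining the two bounds for perpendicular $w$ with the vanishing in the radial direction yields $\Hess r_x=\frac1{r_x}(g-dr_x^2)+O(r_x)$, where the $O(r_x)$ is measured in operator norm with respect to $g$.

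The main obstacle is making the Riccati comparison uniform near $r\to0$ (the singularity $1/r$) and for all of $A$. We handle this by working with $r\,\Hess r_x - (g-dr_x^2)$. It satisfies a Riccati equation whose forcing term is $O(r^2)$ and which vanishes at $r=0$, so a Grönwall argument gives $O(r^2)$ uniformly. The constant depends only on $\Lambda$ and on $\operatorname{diam} A$, hence only on $A$.
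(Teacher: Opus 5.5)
Your proposal is correct and is essentially the paper's proof. The paper likewise applies Hessian comparison with the constant-curvature models given by the sectional curvature bounds on $A$, obtaining $\frac{\cos_{k_+}(r_x)}{\sin_{k_+}(r_x)}(g-(dr_x)^2)\le \Hess r_x\le \frac{\cos_{k_-}(r_x)}{\sin_{k_-}(r_x)}(g-(dr_x)^2)$, and then uses $\cos_k(r)/\sin_k(r)=\frac1r+O(r)$. Your Jacobi-field derivation and Riccati/Gr\"onwall remark are not needed, and your caveat about staying below $\pi/\sqrt{\Lambda}$ and within the injectivity radius is something the paper leaves implicit.
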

    \begin{proof}
        If $k_-$ and $k_+$ are lower and upper bounds on the sectional curvature of $M$ on the set $A$, respectively, then by Hessian comparison \cite[Theorem 11.7]{Lee},
        \[
            \frac{\cos_{k_+}(r_x)}{\sin_{k_+}(r_x)} \cdot \left(g - (dr_x)^2\right) 
            \le
             \Hess r_x \le \frac{\cos_{k_-}(r_x)}{\sin_{k_-}(r_x)} \cdot \left(g - (dr_x)^2\right).
        \]
        The lemma now follows from the estimate $\frac{\cos_k(r)}{\sin_k(r)} = \frac1r + O(r)$ for $k \in \RR$, with the implied constant depending on $k$.
    \end{proof}

    \begin{lemma}\label{lem:smalltriangle}
        Let $\triangle xyz$ be a geodesic triangle contained in the set $A$. Write
        \[
            a : = d(x,y), \quad b : = d(x,z), \quad c : = d(y,z) \quad \text{ and } \quad \theta: = \angle yxz \in (0,\pi).
        \]
        Assume that
        \[
            \max\{a,b\} \le \frac{\pi}{2\sqrt{k_{\max}}},
        \]
        where $k_{\max}$ is the maximum over all sectional curvatures in the set $A$ (if the sectional curvature is nonpositive on $A$, then we make no assumption on $\max\{a,b\}$). Then
        \begin{equation}\label{eq:coscomparison}
            \frac{\cos_{k_+}(c) - \cos_{k_+}(a)\cos_{k_+}(b)}{k_+\sin_{k_+}(a)\sin_{k_+}(b)} \le \cos\theta \le \frac{\cos_{k_-}(c) - \cos_{k_-}(a)\cos_{k_-}(b)}{k_-\sin_{k_-}(a)\sin_{k_-}(b)}.
        \end{equation}
        where $k_-$ (resp. $k_+$) is a lower (resp. upper) bound on the sectional curvature on the set $A$. When $k = 0$ we understand $(\cos_kc - \cos_ka\cos_kb)/(k\sin_ka\sin_kb)$ via a limit to be $(a^2+b^2-c^2)/2ab$.
        
        \medskip
        In particular, if $\theta = \pi/2$ then 
        \[
            \arccos_{k_+}(\cos_{k_+}(a)\cos_{k_+}(b)) \le c \le \arccos_{k_-}(\cos_{k_-}(a)\cos_{k_-}(b)),
        \]
        where 
        \[
        \arccos_k(r):=
        \begin{cases}
            \frac{\arccos(r)}{\sqrt{k}} & k > 0\\
            \frac{\mathrm{arccosh}(r)}{\sqrt{-k}} & k < 0,
        \end{cases}
        \]
        and when $k = 0$ we understand the expression $\arccos_0(\cos_0(a)\cos_0(b))$ via a limit to be $\sqrt{a^2 + b^2}$. 
    \end{lemma}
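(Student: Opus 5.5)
This is the law-of-cosines comparison (Toponogov / Rauch type). I will use the two-sided comparison for the distance function to a point, in the form of Hessian comparison, Lemma \ref{hessianlemma}'s source \cite[Theorem 11.7]{Lee}.

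\textbf{Setup.} Parametrize the side $xz$ by the unit-speed geodesic $\sigma:[0,b]\to A$ with $\sigma(0)=x$, $\sigma(b)=z$, and set
\[
\phi(s):=\cos_{k}\bigl(d(y,\sigma(s))\bigr).
\]
Let $r_y=d(y,\cdot)$. Hessian comparison gives, for $k=k_-$ (upper Hessian bound) and $k=k_+$ (lower bound),
\[
\frac{\cos_{k_+}(r_y)}{\sin_{k_+}(r_y)}\bigl(g-dr_y^2\bigr)\le \Hess r_y\le \frac{\cos_{k_-}(r_y)}{\sin_{k_-}(r_y)}\bigl(g-dr_y^2\bigr).
\]
The upper bound requires only a lower bound on curvature and the absence of conjugate points, which holds by strong convexity of $A$. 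The lower bound requires $r_y<\pi/(2\sqrt{k_{\max}})$ to stay in the range where $\cos_{k_+}>0$; this is where the hypothesis $\max\{a,b\}\le \pi/(2\sqrt{k_{\max}})$ enters. Distances along $\sigma$ are at most $\max\{a,c\}$, so I will also need $c$ controlled, e.g.\ by swapping the roles of the points or by the triangle inequality combined with the standing smallness.

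\textbf{The ODE inequality.} Since $\frac{d}{dr}\cos_k(r)=-k\sin_k(r)$, the comparison becomes
\[
\phi''+k_-\phi\ \ge\ 0 \quad(\text{for } k=k_-),\qquad \phi''+k_+\phi\ \le\ 0\quad(\text{for } k=k_+),
\]
with the directions fixed by the sign of $-k\sin_k$. This is the standard computation
\[
\Hess(\cos_k\circ r_y)=-k\sin_k(r_y)\Hess r_y-k\cos_k(r_y)\,dr_y^2 .
\]
With the sharp model value $\Hess r_y=\tfrac{\cos_k}{\sin_k}(g-dr_y^2)$ this equals $-k\cos_k(r_y)\,g$ exactly, so the inequality along the unit-speed $\sigma$ reads $\phi''\ge -k_-\phi$, respectively $\phi''\le -k_+\phi$. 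The sign of $k$ matters when multiplying, so I will treat $k>0$ and $k<0$ separately; alternatively I will use $(\cos_k r)''=-k\sin_k(r)\,r''-k\cos_k(r)(r')^2$ directly.

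\textbf{Sturm comparison.} The initial data are
\[
\phi(0)=\cos_k(a),\qquad \phi'(0)=-k\sin_k(a)\,\frac{d}{ds}r_y(\sigma(s))\Big|_{s=0}=k\sin_k(a)\cos\theta,
\]
where the last step uses the first variation formula, $\frac{d}{ds}r_y=-\cos\theta$. The model solution of $\psi''+k\psi=0$ with the same data is
\[
\psi(s)=\cos_k(a)\cos_k(s)+k\sin_k(a)\cos\theta\,\sin_k(s).
\]
A Sturm/maximum-principle argument for $\phi-\psi$ then gives $\phi\ge\psi$ in the $k_-$ case and $\phi\le\psi$ in the $k_+$ case on $[0,b]$. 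For this I use the Wronskian $(\phi-\psi)'\cos_k-(\phi-\psi)\cos_k'$, which works as long as $\cos_k(s)>0$, i.e.\ $s\le b<\pi/(2\sqrt k)$, again guaranteed by the hypothesis. Evaluating at $s=b$ gives
\[
\cos_k(c)\ \gtrless\ \cos_k a\cos_k b+k\sin_k a\sin_k b\cos\theta .
\]
Dividing by $k\sin_k a\sin_k b$, whose sign flips with $k$ and requires care, yields \eqref{eq:coscomparison}; the case $k=0$ follows by taking limits.

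\textbf{Right-angle case.} Setting $\theta=\pi/2$ gives $\cos_{k_\pm}(c)$ compared with $\cos_{k_\pm}(a)\cos_{k_\pm}(b)$. Applying $\arccos_k$, which is decreasing for $k>0$ and increasing in $r$ via arccosh for $k<0$, gives the stated bounds once the monotonicity directions are tracked.

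\textbf{Main obstacle.} The main difficulty is the sign bookkeeping with $k$ of either sign. A secondary issue is justifying the comparison at points of $\sigma$ where $r_y$ is not smooth, namely $y\in\sigma$ or cut points. Strong convexity of $A$ excludes cut points. If $y$ lies on $\sigma$, the triangle is degenerate and the inequalities can be checked directly, or handled by a barrier/viscosity argument.
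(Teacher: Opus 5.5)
\textbf{Verdict: correct in outline, by a different route from the paper, with a sign slip (and a degenerate $k=0$ case) that needs fixing when you write it out.}

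\textbf{Comparison with the paper.} The paper's proof is two lines. It quotes the local Rauch comparison (upper bound $k_+$) and Toponogov comparison (lower bound $k_-$) in \emph{angle} form, $\tilde\theta_{k_-}\le\theta\le\tilde\theta_{k_+}$, for model triangles with the same side lengths $a,b,c$. It then reads off $\cos\tilde\theta_k$ from the constant-curvature law of cosines.

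You instead reprove the \emph{hinge} form of these comparisons. Hessian comparison for $r_y$ becomes a differential inequality for $\cos_k(r_y\circ\sigma)$ along the side $xz$. This is the same ingredient the paper already uses in Lemma~\ref{hessianlemma}. A Sturm argument against $\psi$ then compares $\cos_k(c)$ with $\cos_k a\cos_k b+k\sin_k a\sin_k b\cos\theta$. That expression is linear in $\cos\theta$, so dividing gives exactly \eqref{eq:coscomparison}.

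What each buys: the paper's version is shorter and delegates all range and regularity issues to the cited theorems. Yours is self-contained and shows exactly where the hypothesis on $\max\{a,b\}$ is used.
\begin{itemize}
\item It gives positivity of $\cos_k$ on $[0,b]$ in the Wronskian step. A Duhamel formula would need only $\sin_k\ge0$.
\item It gives $r_y(\sigma(s))\le a+s<\pi/\sqrt{k_+}$ for $s<b$, which is all the lower Hessian bound requires.
\end{itemize}
So you need no control of $c$, and the unproved bound $r_y\circ\sigma\le\max\{a,c\}$ can be dropped. Also, $y\notin\sigma$ automatically: $\theta\in(0,\pi)$ and minimizing geodesics in $A$ are unique, so no barrier argument is needed.

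\textbf{Details to correct.}

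\emph{Signs.} The displayed inequalities $\phi''+k_-\phi\ge0$ and $\phi''+k_+\phi\le0$, and hence $\phi\ge\psi$ resp.\ $\phi\le\psi$, hold only for $k>0$. For $k<0$ the factor $-k\sin_k(r_y)$ is positive, so Hessian comparison gives the opposite inequalities. The final division by $k\sin_k a\sin_k b<0$ then flips them back. This is why \eqref{eq:coscomparison} has the same form for both signs.

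\emph{The case $k=0$.} Here $\cos_0(r_y)\equiv1$ carries no information. ``Taking limits'' must therefore mean applying the result with nonzero admissible bounds and letting $k\to0$ in the model expression. If $k_-=0$, use any $k<0$; if $k_+=0$, use any small $k>0$.

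\emph{A cleaner variant.} Both issues disappear if you use $\int_0^{r_y}\sin_k(\tau)\,d\tau$ in place of $\cos_k(r_y)$.
\begin{itemize}
\item This function is increasing in $r_y$, and its second derivative in $r_y$ is $\cos_k=1-k\int_0^{r_y}\sin_k$.
\item Hessian comparison is then multiplied by $\sin_k(r_y)\ge0$, with no sign change.
\item The resulting model inequality is divided by $\sin_k a\sin_k b>0$. This reproduces \eqref{eq:coscomparison} for $k\neq0$, and the Euclidean law of cosines for $k=0$.
\end{itemize}
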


    \begin{proof}
        By the Rauch and Toponogov comparison theorems \cite[Section 4]{Karch}, if for $k \in \RR$ we denote by $\tilde\theta_k$ the angle $\angle\tilde y\tilde x\tilde z$ in a comparison triangle $\triangle \tilde x \tilde y \tilde z$ on a space of constant curvature $k$ satisfying $d(\tilde x,\tilde y) = a$, $d(\tilde x,\tilde z) = b$ and $d(\tilde y,\tilde z) = c$, then
        \[
            \widetilde\theta_{k_-}\le \theta \le \widetilde\theta_{k_+},
        \]
        whence
        \[
            \cos\widetilde\theta_{k_-}
            \ge
            \cos\theta
            \ge
            \cos\widetilde\theta_{k_+}.
        \]
        By the law of cosines in constant curvature,
        \[
            \cos_k(c)
            =
            \cos_k(a)\cos_k(b)
            +
            k\sin_k(a)\sin_k(b)\cos\widetilde\theta_k,
        \]
        so \eqref{eq:coscomparison} follows. 
    \end{proof}    

    \begin{lemma}\label{righttrianglelemma}
        Let $\triangle xyz$ be a right geodesic triangle, $\angle xyz = \pi/2$, contained in the set $A$. Denote 
        \[
            a : = d(x,y), \quad b:=d(y,z), \quad c : = d(x,z) \quad \text{ and } \quad \vphi:=\angle zxy,
        \]
        Assume that
        \[
            l : = \max\{a,b,c\} \le \frac{\pi}{2\sqrt{k_{\max}}},
        \]
        where $k_{\max}$ is the maximum over all sectional curvatures in the set $A$ (if the sectional curvature is nonpositive on $A$, then we make no assumption on $l$). Then
        \begin{equation}\label{approxtrigeq}
            b = c \cdot \sin\vphi \cdot \left(1 + O(l^2)\right),
            \qquad a = c \cdot \cos\vphi \cdot \left(1 + O(l^2)\right),
        \end{equation}
        and 
        \begin{equation}\label{pytheq}
            c = \sqrt{a^2 + b^2} + O((a^2+b^2)^{3/2}) = \sqrt{a^2 + b^2} + O(l^3).
        \end{equation}
    \end{lemma}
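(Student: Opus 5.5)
The plan is to derive both estimates from the comparison inequalities of Lemma \ref{lem:smalltriangle}, using Taylor expansions of $\cos_k$ and $\sin_k$ for small arguments. Apply Lemma \ref{lem:smalltriangle} at the vertex $y$, where the angle is $\pi/2$; the hypothesis on $l$ guarantees the lemma is applicable. This yields
\[
\arccos_{k_+}(\cos_{k_+}(a)\cos_{k_+}(b)) \le c \le \arccos_{k_-}(\cos_{k_-}(a)\cos_{k_-}(b)).
\]
Expanding $\cos_k(a)\cos_k(b) = 1 - \tfrac{k}{2}(a^2+b^2) + O(l^4)$, with higher-order terms controlled by $(a^2+b^2)^2$, and then inverting via $\arccos_k(1 - \tfrac k2 s^2 + O(s^4)) = s + O(s^3)$, gives both bounds in the form $\sqrt{a^2+b^2} + O((a^2+b^2)^{3/2})$. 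This is \eqref{pytheq}. The case $k=0$ is handled by the same expansion, or by continuity in $k$. Because $a,b \le l$, the second form $O(l^3)$ follows immediately.

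For \eqref{approxtrigeq}, apply Lemma \ref{lem:smalltriangle} at the vertex $x$, with the two sides $a$ and $c$ adjacent to the angle $\vphi$ and $b$ opposite it. Expanding the comparison expressions to the next order gives
\[
\cos\vphi = \frac{a^2+c^2-b^2}{2ac} + O(l^2).
\]
This error estimate is not yet adequate when $\cos\vphi$ is small relative to $l^2$. To fix this, substitute $c^2 = a^2+b^2+O(l^2(a^2+b^2))$ from the first step into the numerator, which gives $a^2+c^2-b^2 = 2a^2 + O(l^2 c^2)$. The resulting estimate is only $\cos\vphi = a/c + O(l^2 c/a)$, and this is the main obstacle: an additive error has to be turned into a multiplicative one. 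I would get around it by avoiding the law of cosines at $x$ and instead computing both lengths from the endpoint. Write $z = \exp_x(c\,u)$ with $u$ a unit vector at angle $\vphi$ to the direction of $y$, and work in the normal chart of Lemma \ref{normallemma} centered at $x$. The geodesic $xy$ is a straight ray, and $z$ has Euclidean coordinates $c u$. The condition that $y$ is the foot of the perpendicular from $z$ (first variation) says that $y$ minimizes the distance from $z$ to the geodesic $xy$.

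By \eqref{eq:dd0}, together with the finer statement that the metric error at $v$ is $O(|v|^2)$, the minimizer sits at parameter $c\cos\vphi\,(1+O(l^2))$. Concretely, the function $s \mapsto d(z,\exp_x(s\,e))^2$ differs from its Euclidean version $s^2 - 2sc\cos\vphi + c^2$ by a term whose $s$-derivative is $O(l^2\cdot l \cdot |s - c\cos\vphi| + l^2 \cdot c\sin\vphi\cdot\ldots)$. This step needs care. An easier route is symmetric: the Gauss lemma shows that the radial coordinate is exact, so only the transverse displacement can be distorted, and it is distorted multiplicatively by $1+O(l^2)$. This gives $a = c\cos\vphi(1+O(l^2))$ directly. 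Similarly $b$ is the transverse Euclidean distance $c\sin\vphi$ up to a factor $1+O(l^2)$ by \eqref{eq:dd0}. If I end up needing $b$ multiplicatively but the computation only gives it for $b$ comparable to $l$, I would fall back on the relation $b^2 = c^2 - a^2 + O(l^2 \cdot (\ldots))$ from the first step.
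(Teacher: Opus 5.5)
Your proof of \eqref{pytheq} is correct, and it takes a different route from the paper's. The paper passes to normal coordinates at $y$, where $\exp_y^{-1}x\perp\exp_y^{-1}z$, and applies \eqref{eq:dd0}. You instead apply Lemma~\ref{lem:smalltriangle} at the right angle and expand $\arccos_k(\cos_k a\cos_k b)$. If you track the expansion carefully, your route even gives the sharper $c^2=a^2+b^2+O(a^2b^2)$.

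The gap is in \eqref{approxtrigeq}. You correctly see that the law of cosines at $x$ together with \eqref{pytheq} only gives $\cos\vphi=a/c+O(l^2c/a)$. This is in fact exactly the paper's route (\eqref{eq:dd0} in normal coordinates at $x$ combined with \eqref{pytheq}). Its passage to $a=c\cos\vphi\,(1+O(l^2))$, and then to the estimate for $b$, is only justified when $a$ and $b$ are both comparable to $l$. So the obstacle you found is real, and the paper does not resolve it.

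However, your replacement is never actually carried out. The ``concretely'' computation trails off into ``$\dots$'', and the Gauss-lemma route asserts the conclusion instead of deriving it. The conclusion is not ``direct''. Knowing that $|y|=a$ and $|z|=c$ exactly, and that the metric is distorted only transversally by a factor $1+O(l^2)$, does not locate the foot of the perpendicular to relative accuracy $O(l^2)$ when $a\ll l$. For instance, in normal coordinates at $x$, integrating the geodesic equation from $y=ae$ with Lemma~\ref{lem:normal-geodesic-estimate} only gives $c\cos\vphi=\langle z,e\rangle=a+O(b^2)$. Likewise, both of your fallbacks for $b$ (\eqref{eq:dd0} at $x$, or $b^2=c^2-a^2+\dots$) leave additive errors which are not $O(l^2b^2)$ when $\vphi$ is small.

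The missing idea is to make the error vanish with the short leg.

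For $a$, put $D(s):=d(z,\exp_x(se))^2$. The first variation formula gives $D'(0)=-2c\cos\vphi$ exactly, and $D'(a)=0$ because of the right angle at $y$. Lemma~\ref{hessianlemma} gives $D''=2+O(l^2)$. Integrating over $[0,a]$ yields $c\cos\vphi=a\,(1+O(l^2))$. This is what your ``concretely'' sentence was reaching for: $(D-D_0)'$ vanishes at $s=0$ and has derivative $O(l^2)$, so it is $O(l^2 s)$, not merely pointwise small.

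For $b$, your Gauss-lemma heuristic can be made rigorous. In polar normal coordinates at $x$ one has $g=d\rho^2+\rho^2h_\rho$ with $(1-C\rho^2)g_{S^{n-1}}\le h_\rho\le(1+C\rho^2)g_{S^{n-1}}$, where $g_{S^{n-1}}$ is the round metric; this follows from the Gauss lemma and \eqref{eq:normal}. So $g$ is pinched between the flat cone metrics $d\rho^2+\lambda_\pm^2\rho^2g_{S^{n-1}}$ with $\lambda_\pm=1\pm O(l^2)$. Since $D$ is convex, $b$ is the distance from $z$ to the geodesic through $x$ and $y$. In such a cone that distance equals $c\sin(\lambda\vphi)$. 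Hence $c\sin(\lambda_-\vphi)\le b\le c\sin(\min\{\lambda_+\vphi,\pi/2\})$, i.e.\ $b=c\sin\vphi\,(1+O(l^2))$.
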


    \begin{proof}
        Estimate \eqref{pytheq} follows from Lemma \ref{normallemma} by passing to normal coordinates at $y$, where the two initial tangent vectors are orthogonal. On the other hand, applying \eqref{eq:dd0} in normal coordinates at $x$, with $v=\exp_x^{-1}(y)$ and $v'=\exp_x^{-1}(z)$, gives
        \[
            b^2=a^2+c^2-2ac\cos\vphi+O(l^4).
        \]
        Combining this with \eqref{pytheq}, equivalently with $c^2=a^2+b^2+O(l^4)$, gives
        \[
            \cos\vphi=\frac{a}{c}+O(l^2).
        \]
        Hence $a=c\cos\vphi(1+O(l^2))$. Substituting this estimate in \eqref{pytheq} gives $b=c\sin\vphi(1+O(l^2))$, and so \eqref{approxtrigeq} follows.
    \end{proof}

    \subsection{Second-order differentiability of the signed distance function}

    \begin{lemma}\label{lem:sdf}
        Let $(M,g)$ be a Riemannian manifold, let $K \subseteq M$, let $y \in \partial K$ and suppose that there exists a neighborhood $V \ni y$ and a function $\psi :V \to \RR$ such that
        \begin{equation}\label{eq:psidefining}
            K \cap V = \psi^{-1}((-\infty,0])
        \end{equation}
        and such that $\psi$ is differentiable twice at $y$ in the sense of Alexandrov with $d\psi\vert_y \ne 0$. Then the signed distance function $r$ to $\partial K$ is differentiable twice at $y$ in the sense of Alexandrov.
    \end{lemma}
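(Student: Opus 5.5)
The plan is to compare $r$ near $y$ with the signed distance to an explicit quadric, working in normal coordinates centred at $y$. We fix a linear isometry $L:\RR^n\to T_yM$ with $L e_n=\nu:=\nabla\psi\vert_y/|\nabla\psi\vert_y|$ and set $G:=\exp_y\circ L$. Write $\HH:=\Hess\psi\vert_y$ and $a:=|\nabla\psi\vert_y|>0$. By \eqref{alexandroveq0}, for $v=(v',v^n)$,
\[
\psi(G(v))=a\,v^n+\tfrac12\HH(Lv)+o(|v|^2).
\]
The candidate expansion is
\[
r(G(v))=v^n+Q(v')+o(|v|^2),\qquad Q(v'):=\tfrac{1}{2a}\HH(Lv',Lv'),
\]
where $v'\in e_n^\perp$. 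This shows in particular that $dr\vert_y=\langle\cdot,\nu\rangle$ and $\Hess r\vert_y(w)=2Q(w^\perp)$ (up to writing things in $L$-coordinates). So the task is to prove this one expansion.

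\emph{Step 1: local sandwich of $K$.} Fix $\eps>0$. From the expansion of $\psi$, on a small ball $B_\ell(0)$ we have $|\psi(G(v))-a(v^n+Q(v'))|\le \eps a|v|^2$ whenever $|v^n|\le C|v'|^2+\eps|v|^2$. Here the cross terms $\HH(Lv',Le_n)v^n$ and $\HH(Le_n)(v^n)^2$ are $o(|v|^2)$ on the region where $v^n=O(|v|^2)$. Away from that region the sign of $\psi$ is dictated by $v^n$ alone. Using $|v|^2\le 2|v'|^2+2(v^n)^2$, we conclude that
\[
\{v^n+Q(v')\le -3\eps|v'|^2\}\cap B_\ell \subseteq G^{-1}(K)\cap B_\ell\subseteq \{v^n+Q(v')\le 3\eps|v'|^2\}.
\]
Both bounding sets are sublevel sets $\{v^n\le -f_\pm(v')\}$ of \emph{smooth} quadratic graphs $f_\pm=Q\pm3\eps|\cdot|^2$. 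This is the point of replacing $\eps|v|^2$ by $\eps|v'|^2$: the comparison sets then have smooth boundaries.

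\emph{Step 2: Euclidean signed distance to the model sets.} For a quadratic graph $f$ and a point $v$ with $|v|$ small, the Euclidean signed distance to $\{u^n\le -f(u')\}$ equals
\[
\frac{v^n+f(v')}{\sqrt{1+|\nabla f(v'')|^2}}
\]
for some $v''$ with $|v''|=O(|v|)$, hence equals $v^n+f(v')+O(|v|^3)$. The nearest point lies within distance $2|v|$ of the origin, since $0$ belongs to the set. So only the portion inside $B_{2|v|}$ matters, and the inclusions of Step 1 apply once $|v|<\ell/2$. Monotonicity of signed distance under inclusion then gives
\[
v^n+Q(v')-3\eps|v|^2-O(|v|^3)\;\le\; r_0(v)\;\le\; v^n+Q(v')+3\eps|v|^2+O(|v|^3),
\]
where $r_0$ is the Euclidean signed distance to $G^{-1}(K)$. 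For points inside $K$ the same argument applies with complements, and the sandwich reverses consistently.

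\emph{Step 3: back to the Riemannian metric.} By \eqref{eq:dd0} of Lemma~\ref{normallemma}, applied on a ball of radius $2|v|$ (and, for the nearest points, on a ball of radius $3|v|$), Riemannian and Euclidean distances agree up to a factor $1+O(|v|^2)$. Since the relevant distances are $O(|v|)$, we get $r(G(v))=r_0(v)+O(|v|^3)$. Letting $\eps\to0$ yields the candidate expansion, which is exactly Alexandrov twice differentiability of $r$ at $y$.

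The main obstacle is Step 1: converting the one-point expansion of $\psi$ (with error $o(|v|^2)$ and no control of $\psi$ elsewhere) into a two-sided inclusion of $G^{-1}(K)$ between smooth model sets near $0$. The delicate part is the region where $v^n$ is not small compared with $|v'|^2$. I would handle it by splitting $B_\ell$ into the cone-like region $|v^n|\ge C|v'|^2+\eps|v|^2$, where the linear term $a v^n$ dominates and fixes the sign of $\psi$, and its complement, where the quadratic remainder is genuinely $o(|v'|^2)$.
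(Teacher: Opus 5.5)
Your proof is correct, but it takes a different route from the paper's.

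\textbf{The paper's route.} The paper keeps the full second-order Taylor polynomial $f$ of $\psi$, including the cross term and the $(v^n)^2$ term, and uses a scale-dependent sandwich. The uniform bound $|\psi-f|\le\eps\delta^2$ on $B_\delta(0)$ gives at once $P_{-\eps\delta^2}\cap B_\delta(0)\subseteq K\cap B_\delta(0)\subseteq P_{\eps\delta^2}\cap B_\delta(0)$, with $P_t=\{f\le t\}$. No analysis of where the linear term controls the sign of $\psi$ is needed. The price is that the comparison sets change with $\delta$. The paper therefore needs the transversality estimate $r_t=r_0+O(t)$ for the Euclidean signed distances to $\partial P_t$; this is where $d\psi\vert_y\ne0$ enters. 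It also treats the layer $P_{\eps\delta^2}\setminus P_{-\eps\delta^2}$ as a separate case. It concludes $r=r_0+O(\eps\delta^2+\delta^3)$ on $B_{\delta/4}(0)$, where $r_0$ is the smooth signed distance to the quadric $\{f=0\}$.

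\textbf{Your route.} You pay up front with the cone/paraboloid splitting of Step~1; there $d\psi\vert_y\ne0$ enters through the choice of $C$, so that $a v^n$ dominates off the paraboloidal region. In return you get three things.
\begin{enumerate}
\item Your comparison sets are fixed, $\delta$-independent graphs of quadratic forms, so you need no level-shift estimate.
\item Localized monotonicity of signed distance under inclusion handles points inside, outside and near $\partial K$ uniformly, rather than case by case.
\item You obtain the derivatives explicitly: $dr\vert_y=\langle\cdot,\nu\rangle$ and $\Hess r\vert_y(w)=|\nabla\psi\vert_y|^{-1}\Hess\psi\vert_y(w^\perp)$, where $w^\perp$ is the component of $w$ orthogonal to $\nu$. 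The paper only records that $r_0$ is smooth.
\end{enumerate}

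\textbf{A minor remark.} The sets $\{v^n+Q(v')\le\pm\eps|v|^2\}$ would also have smooth boundaries near $0$, since their defining functions have gradient $e_n$ there. So the real gain from using $|v'|^2$ is that the model boundaries become exact quadratic graphs, not that they become smooth.
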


    \begin{figure}[ht]
        \centering
        \begin{tikzpicture}[scale=1.7]

            \begin{scope}
                \clip (0,0) circle (1.4);

                \fill[gray!25]
                    (-2,-2) --
                    plot[domain=-2:2, smooth] (\x, {-0.2*\x*\x - 0.25}) --
                    (2,-2) -- cycle;

                \fill[gray!10]
                    plot[domain=-2:2, smooth] (\x, {-0.2*\x*\x + 0.25}) --
                    plot[domain=2:-2, smooth] (\x, {-0.2*\x*\x - 0.25}) --
                    cycle;

                \draw[thick]
                    plot[domain=-2:2, smooth] (\x, {-0.2*\x*\x});

                \draw[dashed, thick]
                    plot[domain=-2:2, smooth] (\x, {-0.2*\x*\x + 0.25});

                \draw[dashed, thick]
                    plot[domain=-2:2, smooth] (\x, {-0.2*\x*\x - 0.25});
            \end{scope}

            \draw[gray] (0,0) circle (1.4);

            \draw[dashed, opacity=0.5] (0,0) -- (-0.99, -0.99)
                node[midway, below] {\small $\delta$};
            \node at (0, 0.7) {\small $B_\delta(0)\setminus P_{\varepsilon\delta^2}$};

            \node[right] at (1.42, -0.37) {\small $\partial K$};

            \fill (0,0) circle (1.5pt);
            \node[below right] at (0,0) {$0$};

            \node at (0,-0.85) {$P_{-\varepsilon\delta^2}$};
        \end{tikzpicture}
        \caption{Proof of Lemma \ref{lem:sdf}}
        \label{fig:sdf}
    \end{figure}

    \begin{proof}
        The notion of second order differentiability (in the sense of Alexandrov, we omit these extra words in the sequel) is invariant under diffeomorphisms, so we may work in normal coordinates centered at $y$; the function $\psi$ is then differentiable twice at the origin, and our goal is to prove that the function $r$ is differentiable twice at the origin.

        \medskip
        Let $f$ denote the second-order Taylor polynomial of the function $\psi$ at $0$:
        \[
            f(x) : = d\psi\vert_0(x) + \frac12 \cdot \Hess\psi\vert_0(x),
        \]
        and write
        \[
            P_t : = \left\{x \in \RR^n \, : \,  f(x) \le t\right\}, \qquad  t \in \RR.
        \]
        Since $d\psi\vert_0 \ne 0$, for sufficiently small $\delta$ the Euclidean Hausdorff distance between $\partial P_t \cap B_\delta(0)$ and $\partial P_0 \cap B_\delta(0)$ is $O(t)$. Hence if we denote by $r_t$ the Euclidean signed distance function to $\partial P_t$ then 
        \begin{equation}\label{eq:rtdelta}
            r_t = r_0 + O(t) \qquad \text{on $B_\delta(0)$}.
        \end{equation}

        \medskip
        We claim that
        \begin{equation}\label{eq:rr0delta}
            r = r_0 + o(\delta^2) \qquad \text{on $B_{\delta/4}(0)$.}
        \end{equation}
        Since $r_0$ is the signed distance function to $\{f=0\}$, which is smooth near $0$, this will prove that $r$ is differentiable twice at $0$.

        \medskip
        Let $\eps > 0$. Since $\psi$ is differentiable twice at $0$, for all sufficiently small $\delta$ we have
        \[
            |\psi(x) - f(x)| \le \eps\delta^2 \qquad \text{ for all $x \in B_\delta(0)$},
        \]
        whence by \eqref{eq:psidefining},
        \begin{equation}\label{eq:KcapBdelta}
            P_{-\eps\delta^2} \cap B_\delta(0) \subseteq K \cap B_\delta(0) \subseteq P_{\eps\delta^2} \cap B_\delta(0),
        \end{equation}
        see Figure \ref{fig:sdf}. Denote Euclidean distance by $d_0$ (Riemannian distance is denoted by $d$ as usual). Since $0 \in \partial K$ and $f(0) = 0$, for $x \in B_{\delta/4}(0)$ the nearest points to $x$ of $K$, of $M \setminus K$, and of each hypersurface $\{f=t\}$ with $|t| \le \eps\delta^2$ all lie in $B_\delta(0)$ (their distance from $0$ is at most $2|x| + O(\eps\delta^2) < \delta$). In particular,
        \begin{equation}\label{eq:distid}
            d_0(x,B_\delta(0)\setminus P_t) = |r_t(x)| \qquad \text{for $|t| \le \eps\delta^2$ and $x \in P_t \cap B_{\delta/4}(0)$.}
        \end{equation}

        \medskip
        Let $x \in B_{\delta/4}(0)$, and consider three cases:
        \begin{itemize}
            \item If $x \in P_{-\eps\delta^2}$, then by \eqref{eq:KcapBdelta}, Lemma \ref{normallemma} and \eqref{eq:rtdelta},
            \begin{align*}
                -r(x) & = d(x,M\setminus K) \\
                & \le d\left(x,B_\delta(0)\setminus P_{\eps\delta^2}\right)\\
                & = d_0(x,B_\delta(0)\setminus P_{\eps\delta^2}) + O(\delta^3) \\
                & = - r_{\eps\delta^2}(x) + O(\delta^3)\\
                & = - r_0(x) + O(\eps\delta^2 + \delta^3)
            \end{align*}
            and
            \begin{align*}
                -r(x) & = d(x,M\setminus K) \\
                & \ge d\left(x,B_\delta(0)\setminus P_{-\eps\delta^2}\right)\\
                & = d_0(x,B_\delta(0)\setminus P_{-\eps\delta^2}) + O(\delta^3)\\
                & = -r_{-\eps\delta^2}(x) + O(\delta^3)\\
                & = -r_0(x) + O(\eps\delta^2 + \delta^3).
            \end{align*}
            \item If $x \notin P_{\eps\delta^2}$, then $x \notin K$ and therefore $r(x) = d(x,K)$. By a similar argument,
            \[
                r(x) = r_0(x) + O(\eps\delta^2 + \delta^3).
            \]
            \item Finally, if $x \in P_{\eps\delta^2}\setminus P_{-\eps\delta^2}$, then by \eqref{eq:distid} and \eqref{eq:rtdelta} ,
            \begin{align*}
                |r(x)| & \le \max\left\{d(x,K),d(x,B_\delta(0)\setminus K)\right\}\\ 
                & \le \max\left\{d(x, P_{-\eps\delta^2}),d(x,B_\delta(0)\setminus P_{\eps\delta^2})\right\}\\
                & = \max\left\{d_0(x, P_{-\eps\delta^2}),d_0(x,B_\delta(0)\setminus P_{\eps\delta^2})\right\} + O(\delta^3)\\
                &  = |r_0(x)| + O(\eps\delta^2 + \delta^3).
            \end{align*}
            
            Since $x$ lies between $\{f=-\eps\delta^2\}$ and $\{f=\eps\delta^2\}$, we have $|r_0(x)| = O(\eps\delta^2)$, whence
            \[
                |r(x) - r_0(x)| \le |r(x)| + |r_0(x)| = O(\eps\delta^2 + \delta^3).
            \]
        \end{itemize}
        Since $\eps$ is arbitrary, this finishes the proof of \eqref{eq:rr0delta} and of the lemma.  
    \end{proof}

    \bibliographystyle{plain}
    \bibliography{references}

\begin{thebibliography}{10}

\bibitem{ABW}
Rotem Assouline, Florian Besau, and Elisabeth~M. Werner.
\newblock Illumination bodies in projective geometries.
\newblock {\em Submitted, arXiv 2605.25122}, 2026.

\bibitem{BaranyLarman}
Imre B{\'a}r{\'a}ny and David~G. Larman.
\newblock Convex bodies, economic cap coverings, random polytopes.
\newblock {\em Mathematika}, 35:274--291, 1988.

\bibitem{BW:2016}
Florian Besau and Elisabeth~M. Werner.
\newblock The spherical convex floating body.
\newblock {\em Adv. Math.}, 301:867--901, 2016.

\bibitem{BW}
Florian Besau and Elisabeth~M. Werner.
\newblock The floating body in real space forms.
\newblock {\em J. Differential Geom.}, 110(2):187--220, 2018.

\bibitem{BCP}
Gabriele Bianchi, Andrea Colesanti, and Carlo Pucci.
\newblock On the second differentiability of convex surfaces.
\newblock {\em Geom. Dedicata}, 60(1):39--48, 1996.

\bibitem{Blaschke}
Wilhelm Blaschke.
\newblock {\em Differentialgeometrie {II}, {A}ffine {D}ifferentialgeometrie}.
\newblock Springer-Verlag, Berlin, 1923.

\bibitem{Blaschke56}
Wilhelm Blaschke.
\newblock {\em Kreis und {K}ugel}.
\newblock Walter de Gruyter \& Co., Berlin, 1956.
\newblock 2te Aufl.

\bibitem{Chavel}
Isaac Chavel.
\newblock {\em Riemannian geometry. {A} modern introduction}.
\newblock Number~98 in Camb. Stud. Adv. Math. Cambridge University Press,
  Cambridge, second edition, 2006.

\bibitem{CE}
Jeff Cheeger and David~G. Ebin.
\newblock {\em Comparison theorems in {R}iemannian geometry}.
\newblock AMS Chelsea Publishing, Providence, RI, 2008.
\newblock Revised reprint of the 1975 original.

\bibitem{CG}
Jeff Cheeger and Detlef Gromoll.
\newblock On the structure of complete manifolds of nonnegative curvature.
\newblock {\em Ann. of Math. (2)}, 96:413--443, 1972.

\bibitem{Dupin}
Charles Dupin.
\newblock {\em Application de g{\'e}ometrie et de m{\'e}chanique {\`a} la
  marine, aux ponts et chauss{\'e}es}.
\newblock Bachelier, Paris, 1822.

\bibitem{EG}
Lawrence~C. Evans and Ronald~F. Gariepy.
\newblock {\em Measure theory and fine properties of functions}.
\newblock Textb. Math. CRC Press, Boca Raton, FL, revised edition, 2015.

\bibitem{Gr}
Alfred Gray.
\newblock {\em Tubes}.
\newblock Number 221 in Progr. Math. Birkh{\"a}user Verlag, Basel, second
  edition, 2004.

\bibitem{Karch}
Hermann Karcher.
\newblock Riemannian comparison constructions.
\newblock In {\em Global differential geometry}, volume~27 of {\em MAA Stud.
  Math.}, pages 170--222. Math. Assoc. America, Washington, DC, 1989.

\bibitem{Lee}
John~M. Lee.
\newblock {\em Introduction to {R}iemannian manifolds}.
\newblock Number 176 in Grad. Texts in Math. Springer, Cham, second edition,
  2018.

\bibitem{Lutwak}
Erwin Lutwak.
\newblock The {B}runn-{M}inkowski-{F}irey theory. {II}. {A}ffine and geominimal
  surface areas.
\newblock {\em Adv. Math.}, 118:244--294, 1996.

\bibitem{Lyt}
Alexander Lytchak.
\newblock A note on subsets of positive reach.
\newblock {\em Math. Nachr.}, 297(3):932--942, 2024.

\bibitem{Reitzner}
Matthias Reitzner.
\newblock Random points on the boundary of smooth convex bodies.
\newblock {\em Trans. Amer. Math. Soc.}, 354(6):2243--2278, 2002.

\bibitem{Sa}
L.~A. Santal{\'o}.
\newblock On parallel hypersurfaces in the elliptic and hyperbolic
  $n$-dimensional space.
\newblock {\em Proc. Amer. Math. Soc.}, 1:325--330, 1950.

\bibitem{Sc}
Rolf Schneider.
\newblock {\em Convex bodies: the {B}runn-{M}inkowski theory}.
\newblock Number 151 in Encyclopedia Math. Appl. Cambridge University Press,
  Cambridge, second expanded edition, 2014.

\bibitem{SW}
Carsten Sch{\"u}tt and Elisabeth Werner.
\newblock The convex floating body.
\newblock {\em Math. Scand.}, 66(2):275--290, 1990.

\bibitem{SW5}
Carsten Sch{\"u}tt and Elisabeth Werner.
\newblock Polytopes with vertices chosen randomly from the boundary of a convex
  body.
\newblock In {\em Geometric aspects of functional analysis}, volume 1807 of
  {\em Lecture Notes in Math.}, pages 241--422. Springer, Berlin, 2003.

\bibitem{ST}
Gil Solanes and Juan~Andr{\'e}s Trillo.
\newblock Tube formulas for valuations in complex space forms.
\newblock {\em Math. Ann.}, 391(1):881--913, 2025.

\bibitem{TW:2023}
Kateryna Tatarko and Elisabeth~M. Werner.
\newblock $l_p$-{S}teiner quermassintegrals.
\newblock {\em Adv. Math.}, 430, 2023.

\bibitem{Vil}
C\'edric Villani.
\newblock {\em Optimal transport}, volume 338 of {\em Grundlehren der
  mathematischen Wissenschaften [Fundamental Principles of Mathematical
  Sciences]}.
\newblock Springer-Verlag, Berlin, 2009.
\newblock Old and new.

\bibitem{Wal74}
Rolf Walter.
\newblock On the metric projection onto convex sets in {R}iemannian spaces.
\newblock {\em Arch. Math. (Basel)}, 25:91--98, 1974.

\bibitem{Wal}
Rolf Walter.
\newblock Some analytical properties of geodesically convex sets.
\newblock {\em Abh. Math. Sem. Univ. Hamburg}, 45:263--282, 1976.

\bibitem{Wer}
Elisabeth Werner.
\newblock Illumination bodies and affine surface area.
\newblock {\em Studia Math.}, 110(3):257--269, 1994.

\bibitem{We}
Hermann Weyl.
\newblock On the volume of tubes.
\newblock {\em Amer. J. Math.}, 61(2):461--472, 1939.

\bibitem{Ye:2015}
Deping Ye.
\newblock New {O}rlicz affine isoperimetric inequalities.
\newblock {\em J. Math. Anal. Appl.}, 427:905--929, 2015.

\bibitem{Ye:2016}
Deping Ye.
\newblock Dual {O}rlicz-{B}runn-{M}inkowski theory: dual {O}rlicz $l_\phi$
  affine and geominimal surface areas.
\newblock {\em J. Math. Anal. Appl.}, 443:352--371, 2016.

\bibitem{Zhao2016}
Yiming Zhao.
\newblock On $l_p$-affine surface area and curvature measures.
\newblock {\em Int. Math. Res. Not. IMRN}, 2016(5):1387--1423, 2016.

\end{thebibliography}
\end{document}